\newtheorem{teore}{Theorem}[section]
\newcommand{\sign}{\mbox{sign}}
\newtheorem{definition}[teore]{Definition}
\newtheorem{lemma}[teore]{Lemma}
\newtheorem{corollary}[teore]{Corollary}
\newtheorem{theorem}[teore]{Theorem}
\newtheorem{remark}[teore]{Remark}
\newtheorem{example}[teore]{Example}
\begin{document}
\pagestyle{myheadings}

\title[Limit points of Laplacian spectral radii]{Limit points of (signless)  Laplacian spectral radii of linear trees}
\subjclass{05C50, 05C05, 15A18}
\keywords{spectral radius; tree;  limit points;}
\author[F. Belardo]{Francesco Belardo} \address{Department of Mathematics and Applications, University of Naples Federico II, Italy} \email{\tt fbelardo@unina.it}

\author[E. R. Oliveira]{Elismar R. Oliveira}
\address{Instituto de Matem\'atica e Estat\'{\i}stica, UFRGS, Porto Alegre, Brazil}
\email{\tt elismar.oliveira@ufrgs.br}

\author[V. Trevisan]{Vilmar Trevisan}
\address{Instituto de Matem\'atica e Estat\'{\i}stica, UFRGS, Porto Alegre, Brazil }
\email{\tt trevisan@mat.ufrgs.br}

\begin{abstract}
We study limit points of the spectral radii of Laplacian matrices of graphs. We adapted the method used by J. B. Shearer in 1989, devised to prove the density of adjacency limit points of caterpillars, to Laplacian limit points. We show that this fails, in the sense that there is an interval for which the method produces no limit points. Then we generalize the method to Laplacian limit points of linear trees and prove that it generates a larger set of limit points. The results of this manuscript may provide important tools for proving the density of Laplacian limit points in $[4.38+, \infty)$.
\end{abstract}

\maketitle

\section{Introduction}

The 1972 seminal paper of A. J. Hoffman~\cite{hoffman1972limit} introduced the concept of limit points of eigenvalues of graphs. Let $\mathcal{A}$ be the set of all symmetric matrices of all orders, every entry of which is a non-negative integer and $R=\{\rho:\rho=\rho(A) \mbox{ for some } A\in\mathcal{A}\}$ where $\rho(A)$ is the largest eigenvalue of $A$. He asked which real number can be in $R$ and showed that it is sufficient to consider matrices of $\mathcal{A}$ having only entries in $\{0, 1\}$ and 0 diagonal, e.g. adjacency matrices of graphs. Additionally, he determined all limit points of $R \leq\sqrt{2+\sqrt{5}}$. More precisely, let $\tau=\frac{1+\sqrt{5}}{2}$ (the golden mean). For $n=1,2, \ldots$, let $\overline{\beta}_n$ be the positive root of
$$
Q_n(x)=x^{n+1}-\left(1+x+x^2+\cdots+x^{n-1}\right).
$$
Let $\bar{\alpha}_n=\bar{\beta}_n^{1 / 2}+\bar{\beta}_n^{-1 / 2} \cdot$ Then
$$
2=\bar{\alpha}_1<\bar{\alpha}_2<\cdots
$$
are all the limit points of $R$ smaller than ${\displaystyle\lim _{n \to \infty} \bar{\alpha}_{n}=\tau^{1 / 2}+\tau^{-1 / 2}=\sqrt{2+\sqrt{5}}  ( =2.05+)}$.\\

In 1989,  a remarkable result due to J. B. Shearer~\cite{shearer1989distribution} extended the work of Hoffman. He showed that every real number larger than $\sqrt{2+\sqrt{5}}$ is a limit point of $R$.

In his original paper, Hoffman asks about limit points of other eigenvalues of graphs, and this originated several interesting results on this topic culminating with the fact that every real number is a limit point of some eigenvalue of a graph (see  Zhang \& Chen~\cite{zhang2006limit}). We remark that Estes~\cite{ESTES1992} proved in 1992 that every totally real algebraic integer -- which are roots of monic integral polynomials having only real roots -- is an eigenvalue of a graph.  Since eigenvalues are totally real algebraic integers (roots of the characteristic polynomial), it follows that this set is dense in the real line. What is perhaps surprising, is that only eigenvalues of trees are needed to be considered for these results. Salez~\cite{Salez2015}, extending Estes' result, showed that any totally real algebraic integer is an eigenvalue of a tree, while both Hoffman and Shearer found sequences of trees whose spectral radii were limit points. It seems remarkable that the set composed only by the largest eigenvalue of the adjacency matrix of trees is dense in the interval $[\sqrt{2+\sqrt{5}}, \infty)$.

It is worth noticing that in 2003, Kirkland \cite{kirkland2003note} showed that any nonnegative real number is a limit point for the algebraic connectivity (the second smallest Laplacian eigenvalue). The same author \cite{kirkland2006limit} considered limit points for the positive eigenvalues of the normalized Laplacian matrix of a graph. Specifically, he shows that the set of limit points for the $j$-th smallest such eigenvalues is equal to $[0,\, 1]$, while the set of limit points for the $j$-th largest such eigenvalues is equal to $[1,\, 2]$.  Wang et al. \cite{Bel2010} proved that any nonnegative real number is a limit point for some eigenvalue of the signless Laplacian matrix of graphs.

In this paper, we are interested in the problem originated by Hoffman's question, which deals only with limit points of the spectral radius of graphs. More specifically, we want to study the Laplacian version of Hoffman and Shearer's results, that is, what real numbers are limit points of the spectral radii of Laplacian matrices of graphs. The converse of this problem may also be viewed as \emph{which sequences of graphs have Laplacian spectral radius with limit points}  (we refer to the next section for the precise definitions).

In order to explain our results, we recall the work of Guo~\cite{guo2008limit}. Let $$\omega=\frac{1}{3}(\sqrt[3]{19+3 \sqrt{33}}+\sqrt[3]{19-3 \sqrt{33}}+1),$$ $~ \beta_0=1$ and $\beta_n,\;n \geqslant 1$ be the largest positive root of
$$
P_n(x)=x^{n+1}-\left(1+x+\cdots+x^{n-1}\right)(\sqrt{x}+1)^2 .
$$
Let $\alpha_n=2+\beta_n^{\frac{1}{2}}+\beta_n^{-\frac{1}{2}}$. Then
$$
4=\alpha_0<\alpha_1<\alpha_2<\cdots
$$
are all the limit points of Laplacian spectral radii of graphs smaller than $\lim _{n \rightarrow \infty} \alpha_n=2+\omega+\omega^{-1}$ (= $4.38+)$.

By analogy to the adjacency case, it is natural to ask whether any real number $\mu \geq 2+w+w^{-1}=4.38+$ is the limit point of the Laplacian spectral radii of graphs. We refer to Figure~\ref{fig:esquema} for an illustration of the current state of knowledge.
\begin{figure}[ht!]
  \centering
  \includegraphics[width=12cm]{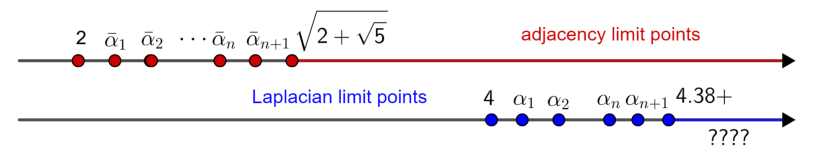}
  \caption{Current status of (Laplacian) limit points}\label{fig:esquema}
\end{figure}

Our contribution in this paper is the development of some analytical tools allowing one to study the density of Laplacian spectral radius in $[4.38+, \infty)$. We first adapt Shearer's method to the Laplacian case, verifying that it is not sufficient to prove density. In spite of the fact that the method produces sequence of caterpillars having a limit point, we show that this limit point is not always the \emph{desired} number. As a consequence, we find a whole interval where the method produces no Laplacian limit points. We then extend Shearer's method to the class of linear trees, e.g. we define sequences of linear trees whose Laplacian spectral radius has limit points. This provides a generalization of Shearer's process, since caterpillars is a subclass of linear trees.

It is tempting to conjecture that any $\mu \geq 4.38+$ is a limit point of Laplacian spectral radii of graphs, however, it seems that Laplacian spectral radii are a bit slick! Our generalization improves Shearer's process in the sense that we are able to find a larger set of limit points. We do not know whether sequences of linear trees suffice to prove the density conjecture (if true).

The rest of the paper is organized as follows. In the next section, we present the necessary notation, definitions and the main tools used in the paper. In particular, we explain Shearer's procedure using our tools. In Section~\ref{sec:revisit shearer Laplacian}, we apply Shearer's approach to Laplacian case, verifying that it fails. In Section~\ref{sec:lintreeseq} we study the convergence of the Laplacian spectral radius of sequences of linear trees. In Section~\ref{sec:appProcess} we use these results to generalize Shearer's work: we obtain a method that produces a sequence of linear trees for a given number $\mu$ in a way that the Laplacian spectral radius has a limit point $\leq \mu$. We call this \emph{generalized Shearer's sequence} and show that the process provide a larger set of limit points.  In Section~\ref{sec:density laplac lim points} we study the Laplacian limit points given by the method. We present some numerical evidence of the density of these limit points. We also prove a density result, showing that a truncated sequence of linear trees have algebraic limit points that is dense in a subset of the limit points. In Section~\ref{sec:vartec}, we apply some analytical tools to obtain approximations, providing quicker ways to obtain sequence of linear trees having a specified limit point. Finally, in Section~\ref{sec:final}, we suggest a few open problems and make final considerations.

\begin{remark}
 The results of this paper are stated for the Laplacian matrix, however, as all the graphs involved are trees, they naturally extend to signless Laplacian matrix, since the spectrum of these two matrices are equal for bipartite graphs.
\end{remark}

\section{Notation and preliminaries} \label{sec:prel}
Let $G=(V,E)$ be an undirected graph with vertex set $V$ and edge set $E$. If $|V|=n$, then its adjacency matrix $A(G)=[a_{ij}]$ is the $n\times n$ matrix of zeros and ones such that $a_{ij}=1$ if and only if $v_{i}$ is adjacent to $v_{j}$ (that is, there is an edge between $v_{i}$ and $v_{j}$). A value $\lambda$ is an eigenvalue if ${\rm det}(\lambda I_{n}-A)=0$, and since $A$ is real symmetric its eigenvalues are real. In this setting, we denote by $\rho_{A}(G)$ the largest eigenvalue of $A(G)$, which is called the spectral radius of $G$. One can also consider the Laplacian matrix $L(G)$ of a graph, which is given by $L(G):= D(G)-A(G)$, where  $D(G)=[d_{ij}]$ is the $n\times n$ diagonal matrix with $d_{ii}={\rm degree}(v_{i})$. A value $\mu$ is an eigenvalue if ${\rm det}(\mu I_{n}-L)=0$, and since $L$ is positive semi-definite, its eigenvalues are non-negative. We denote by $\rho_{L}(G)$ the largest eigenvalue of $L(G)$ which is called the Laplacian spectral radius of $G$. One can also consider the signless Laplacian associated to a graph, which is denoted $Q(G):= D(G)+ A(G)$.\\

Generalizing the concept of limit point of graphs, we say that a real number $\gamma$ is an $M$-limit point of the $M$-spectral radius of graphs if there exists a sequence of graphs $\{G_k \; | \; k \in \mathbb{N}\}$ such that
$$\lim_{k \to \infty} \rho_{M}(G_k)=\gamma,$$
where $\rho_{M}(G_i)\neq \rho_{M}(G_j),\; i\neq j$ and $M$ is a class of matrices associated with a graph such as adjacency, Laplacian, signless Laplacian, etc. (See  Wang \& Brunetti \cite{wang2020hoffman}).

In this note we will extend Shearer's ideas for the adjacency matrix to the Laplacian version. The background of this process can be recovered by the use of modern methods such as the Jacobs-Trevisan diagonalization algorithm~\cite{jacobs2011locating}, and properties of some recurrence equations from \cite{OliveTrevAppDiff}. Those are our main tools and we briefly explain here, so that the note is self contained.

Given a symmetric matrix $A$ whose underlying graph is a tree, the following algorithm of Figure~\ref{fig:algo} outputs a diagonal matrix that is congruent to $A+xI$ and hence, the sign of its diagonal values determine the number of eigenvalues that greater than/equal to/larger than $x$. More precisely, we state the following result for future reference.

\begin{lemma}(\cite{jacobs2011locating}) \label{lem:loc} Let $A$ be a symmetric matrix whose underlying graph is a tree and let $D$ be the matrix output by $\rm{Diagonalize}(A,-x)$.
The number of eigenvalues of $A$  smaller than/equal to/larger than $x$ is, respectively,  the number of negative/zero/positive diagonal values of $D$
\end{lemma}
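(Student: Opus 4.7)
The plan is to show that the algorithm produces a diagonal matrix $D$ that is \emph{congruent} to $A - xI$ and then apply Sylvester's Law of Inertia. The inertia of $A - xI$ (i.e., the triple of numbers of positive, zero, and negative eigenvalues) is precisely what counts eigenvalues of $A$ larger than, equal to, and smaller than $x$, because the spectrum of $A - xI$ is $\{\lambda_i - x\}$ where $\lambda_i$ are the eigenvalues of $A$. Since $D$ is diagonal, its inertia is read off directly from the signs of its diagonal entries, so the lemma follows at once from the congruence.

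The heart of the matter is therefore verifying the congruence. I would argue by induction on the order $n$ of $A$, rooting the underlying tree $T$ at an arbitrary vertex and processing the vertices in a postorder traversal (leaves first). The base case $n=1$ is trivial. For the inductive step, let $v$ be a leaf with parent $u$, and consider the principal $2\times 2$ block of $A - xI$ restricted to $\{v,u\}$, which has the form
\[
\begin{pmatrix} a_{vv}-x & a_{vu} \\ a_{vu} & a_{uu}-x \end{pmatrix}.
\]
If the leaf's current diagonal value $d_v = a_{vv} - x$ is nonzero, the elementary row operation that clears the $(u,v)$ entry, together with the symmetric column operation, is a congruence $P^{T}(A-xI)P$ with $P$ differing from the identity only in the $(v,u)$ entry. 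Its effect is to leave $d_v$ unchanged and to update $a_{uu} - x$ to $a_{uu} - x - a_{vu}^2/d_v$, exactly the rule the algorithm uses; crucially, because $v$ has no other neighbor than $u$ in $T$, no fill-in is produced, and the remaining matrix is still the Laplacian/adjacency shifted matrix of the subtree $T - v$ (with one diagonal entry modified). We may then invoke the inductive hypothesis on $T - v$. The degenerate case where $d_v = 0$ is handled by the usual swap/alternative step encoded in the algorithm, which is again a congruence (a symmetric permutation followed by a row/column sweep); this is the only slightly subtle point, since a straightforward pivot is not available.

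Once the congruence $D \sim A - xI$ is established, Sylvester's Law of Inertia finishes the proof: the number of negative (resp.\ zero, positive) diagonal entries of $D$ equals the number of negative (resp.\ zero, positive) eigenvalues of $A - xI$, which is the number of eigenvalues $\lambda_i$ of $A$ satisfying $\lambda_i < x$ (resp.\ $\lambda_i = x$, $\lambda_i > x$).

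The main obstacle I anticipate is the degenerate pivot case $d_v = 0$: the algorithm must dispose of the zero diagonal without destroying either the congruence or the tree-elimination order. One must check that the prescribed workaround (typically: swap with the parent and apply an appropriate hyperbolic rotation, or account for the zero directly in the inertia count) indeed corresponds to a valid congruence transformation on $A - xI$. Everything else is routine bookkeeping that the leaf-elimination introduces no fill-in, which is precisely where the hypothesis that the underlying graph is a tree is used.
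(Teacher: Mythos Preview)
The paper does not supply its own proof of this lemma; it simply cites \cite{jacobs2011locating} and states the result for later use. So there is nothing in the paper to compare your argument against.

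That said, your outline is the standard and correct one: the algorithm performs a sequence of congruence transformations on $A-xI$ (symmetric Gaussian elimination along a postorder of the tree), and Sylvester's Law of Inertia converts the signature of the resulting diagonal matrix into the eigenvalue count you want. Your treatment of the generic pivot step is fine, and you correctly identify that the tree hypothesis is exactly what prevents fill-in.

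The one place where your write-up is still a sketch rather than a proof is the degenerate case $d_c=0$. The algorithm's actual rule here is specific: it sets $d_j\leftarrow 2$, $d_k\leftarrow -a_{jk}^2/2$, and severs the edge to the parent. You should verify explicitly that this pair of assignments arises from a genuine congruence applied to the $2\times 2$ block $\begin{pmatrix}0 & a_{jk}\\ a_{jk} & *\end{pmatrix}$ (it does: any such block has inertia $(1,0,1)$ regardless of $*$, and $(2,\,-a_{jk}^2/2)$ reproduces that inertia), and that cutting the edge to the parent is legitimate because after this step the $(k,\ell)$ entry no longer affects the inertia count. Your phrases ``swap with the parent'' and ``hyperbolic rotation'' do not match what the algorithm actually does, so tighten that part before calling the proof complete.
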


\begin{figure}[h]
{\tt
\begin{tabbing}
aaa\=aaa\=aaa\=aaa\=aaa\=aaa\=aaa\=aaa\= \kill
     \> Input: matrix $A = (a_{ij})$ and underlying tree $T$ whose vertices $v_1,\ldots,v_n$ \\
     \> are ordered bottom-up and real $x$\\
     \> Output: diagonal matrix $D = \mbox{diag}(d_1, \ldots, d_n)$ congruent to $A+xI$ \\
     \> \\
     \>  Algorithm $\mbox{Diagonalize}(A,x)$ \\
     \> \> initialize $d_i \longleftarrow a_{ii}+x$, for all $i$ \\
     \> \> {\bf for } $k = 1$ to $n$ \\
     \> \> \> {\bf if} $v_k$ is a leaf {\bf then} continue \\
     \> \> \> {\bf else if} $d_c \neq 0$ for all children $c$ of $v_k$ {\bf then} \\
     \> \> \>  \>   $d_k \longleftarrow d_k - \sum \frac{(a_{ck})^2}{d_c}$, summing over all children of $v_k$ \\
     \> \> \> {\bf else } \\
     \> \> \> \> select one child $v_j$ of $v_k$ for which $d_j = 0$  \\
     \> \> \> \> $d_k  \longleftarrow -\frac{(a_{jk})^2}{2}$ \\
     \> \> \> \> $d_j  \longleftarrow  2$ \\
     \> \> \> \> if $v_k$ has a parent $v_\ell$, delete the edge $\{v_k,v_\ell\}$. \\
     \> \>  {\bf end loop} \\
\end{tabbing}
}
\caption{\label{fig:algo} Diagonalizing $A$ for a symmetric
matrix $A$ with an underlying tree.}
\end{figure}

Applying the tree algorithm on a path of a tree, produces certain numerical rational sequences. They all may be seen in a unified elementary form given by
\begin{equation}\label{formphi}
  x_{j+1}= \varphi(x_{j}), j \geq 1
\end{equation}
where $\varphi(t)= \alpha + \frac{\gamma}{t}$, for $t\neq 0$,   $\alpha, \gamma \in \mathbb{R}$ are fixed numbers ($\gamma \neq 0$) and $x_{1}$ is a given initial condition. The explicit solution is a function $j \to f(j)$ such $x_{j}= f(j)$ for $j \geq 1$ and has been studied in \cite{OliveTrevAppDiff}.

\subsection{Revisiting Shearer's work  for adjacency limit points}
\label{sec:revisit shearer}

In \cite{shearer1989distribution} Shearer has proved the density of the limit points for adjacency matrices in the interval $[\sqrt{2+\sqrt{5}}, \infty)$. The proof is based on an explicit construction of a sequence of graphs. For any given $\lambda \geq \sqrt{2+\sqrt{5}}$, Shearer constructs a sequence of caterpillars whose spectral radii converge to $\lambda$. We now explain, in our framework, why this method works.

Recall that a \emph{caterpillar} is a tree in which the removal of all its leaves transforms it into a path. An arbitrary caterpillar with $k$ back nodes $v_1,\ldots, v_k$, where $v_i$ has $r_i$ leaves may be represented by $$G_{k}:=[r_{1}, r_{2}, ..., r_{k}], \; k \geq 2.$$ We refer to  Figure~\ref{fig:caterpillar} for an illustration.
\begin{figure}
  \centering
  \includegraphics[width=10cm]{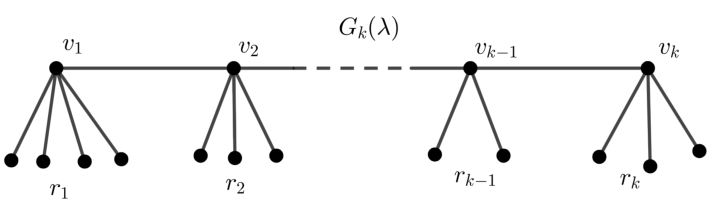}
  \caption{Representation of $G_{k}$}\label{fig:caterpillar}
\end{figure}

We notice that, if we replace each $r_i$ by a star tree with $r_i$ rays, we have an alternative representation for $G$. For instance
$$[3, 1, 0, 2, 2] = [[1, 1, 1], [1], [0], [1, 1], [1, 1]]$$ are different representations of the same tree.

For each $\lambda \in [\sqrt{2+\sqrt{5}},  \infty)$ Shearer finds a caterpillar $G_k(\lambda)=[r_{1}, r_{2}, ..., r_{k}], \; k \geq 2$, in such a way that the number of leaves in each vertex satisfies a recurrence related to the number $\frac{-\lambda\pm\sqrt{\lambda^2 -4}}{2}$.

Consider a given number $\lambda \in [\sqrt{2+\sqrt{5}},  \infty)$ and an arbitrary caterpillar $G_{k}(\lambda):=[r_{1}, r_{2}, ..., r_{k}], \; k \geq 2.$ We apply the algorithm ${\rm Diagonalize}(A(G_{k}), -\lambda)$, the adjacency matrix of $G_k$. Each leaf is initialized with $-\lambda <0$. In the vertex  $v_{1}$ we obtain the value
$$R_{1}:= -\lambda - \frac{r_{1}}{-\lambda}=-\lambda + \frac{r_{1}}{\lambda},$$
and in the next vertex $v_{2}$ we obtain the value
$$R_{2}:= -\lambda - \frac{r_{2}}{-\lambda} - \frac{1}{R_{1}}= -\lambda - \frac{1}{R_{1}} + \frac{r_{2}}{\lambda},$$
and so on.

We observe that the sequence of equations may be seen as the recurrence relation
\begin{equation}\label{eq:base recurrence adjacency}
  \left\{
  \begin{array}{ll}
    x_{j+1}=\varphi(x_{j})\\
    x_{1}=-\lambda,
  \end{array}
  \right.
\end{equation}
where $\varphi(t)=-\lambda -\frac{1}{t}, \; t\neq 0$, disturbed by a drift factor $\delta_{j}:=\frac{r_{j}}{\lambda} \geq 0$.

From \cite{OliveTrevAppDiff} we know that the iterates $x_j$ obeying  Equation~\eqref{eq:base recurrence adjacency} are asymptotic to $\theta=\frac{-\lambda -\sqrt{\lambda^2 - 4}}{2}$, which is a fixed point $\varphi(t)=t$, satisfies  $t^2 + \lambda t +1 =0$. It is shown that
$\theta:= \frac{-\lambda -\sqrt{\lambda^2 - 4}}{2}$ is an attracting point and $\theta':= \frac{-\lambda +\sqrt{\lambda^2 - 4}}{2}= \theta^{-1}>\theta$ is a repelling point, with respect to the iteration process \eqref{eq:base recurrence adjacency}.

From Lemma~\ref{lem:loc}, it follows that $\rho_{A}(G_{k}) < \lambda$ if and only if $R_j <0$ for all $j$. As the attracting interval for $\theta$ is $(-\infty, \theta']$ we need to require that $R_j \leq \theta'$ otherwise  $R_{j'} >0 $ for some $j'>j$ contradicting $\rho_{A}(G_{k}) < \lambda$.

This is the basis for the Shearer's construction (see \cite{shearer1989distribution} equations (1)-(4)), that is, the sequential choice of $r_1, r_2,...,r_k$:\\
If $ -\lambda + \frac{r_{1}}{\lambda} \leq \theta'$ then $R_{1}<0$,
thus we choose
$$r_1:= \max_{r \geq 0}  \{-\lambda + \frac{r}{\lambda} \leq \theta' \} \Leftrightarrow r_1:= \lfloor \lambda ( \theta' - (-\lambda))\rfloor.$$
If $ \varphi(R_{1}) + \frac{r_{2}}{\lambda} \leq \theta',$ then  $ R_{2}<0$,
thus we choose
$$r_2:= \max_{r \geq 0}  \{\varphi(R_{1}) + \frac{r_{2}}{\lambda} \leq \theta' \}\Leftrightarrow r_2:= \lfloor \lambda ( \theta' - \varphi(R_{1}))\rfloor,$$
and so on.
\begin{definition}\label{def: adjacency shaerer sequence}
   Given $\lambda \geq \sqrt{2+\sqrt{5}}$ we define the adjacency Sharer's sequence as the sequence of caterpillars $G_{k}(\lambda):=[r_{1}, r_{2}, ..., r_{k}], \; k \geq 2$ where \[
   \left\{
     \begin{array}{ll}
       r_1:= \lfloor \lambda ( \theta' - (-\lambda))\rfloor; \\
       R_{1}=-\lambda + \frac{r_{1}}{\lambda};\\
       r_{j}:= \lfloor \lambda ( \theta' - \varphi(R_{j-1}))\rfloor, 2 \leq j \leq k;\\
       R_{j}=\varphi(R_{j-1}) + \frac{r_{j}}{\lambda}, 2 \leq j \leq k.
     \end{array}
   \right.
\]
\end{definition}
The adjacency Sharer's sequence $G_{k}, \; k \geq 2$ is such that $\rho_{A}(G_{k}) < \rho_{A}(G_{k+1}) < \lambda$.  Indeed, the part $\rho_{A}(G_{k}) < \lambda$ is a trivial consequence of the construction because $R_1,..., R_k <\theta'<0$ then the spectral radius is necessarily smaller than $\lambda$ (see Theorem 3, from \cite{jacobs2011locating}). The part $\rho_{A}(G_{k}) < \rho_{A}(G_{k+1})$ is a consequence of the interlacing property because $G_{k}$ is always a subgraph of $G_{k+1}$ since the choice of $r_k:= \lfloor \lambda ( \theta' - \varphi(R_{k-1}))\rfloor$ is the same for both graphs.

In particular there exists
$$\lim_{k \to \infty} \rho_{A}(G_k)=\gamma(\lambda) \leq \lambda.$$
Shearer~\cite{shearer1989distribution}  showed that for every real number $\lambda \geq\sqrt{2+\sqrt{5}}$, the limit point $\gamma(\lambda) =\lambda$, that is $\displaystyle \lim_{k\to \infty} \rho_{A}(G_k)=\lambda$.

In this note we will adapt Shearer's construction to the Laplacian matrix. For this, we set next some notation that is going to be used throughout the paper.

\subsection{Notation for Laplacian Matrices}\label{sec:gennotation}

From Guo's result~\cite{guo2008limit}, the problem for limit points of spectral radii of Laplacian matrices that remains to study is the interval $\mu \geq 4.38+$. More precisely, the question is whether any real number $\mu \in [4.38+,\infty)$ is a limit point of Laplacian spectral radius of graphs.

In the next section, we extend Shearer's method of caterpillar construction for Laplacian matrices. We will see the necessity to consider other trees. Here, we set the notation for this larger set of trees.

We recall that a \emph{linear tree} is a tree obtained by attaching a starlike tree (that has at most one vertex of degree larger than 2) at each vertex of a path.  We denote a linear tree by $G=[T_{1},\ldots, T_{k}]$, for $k \geq 1$, where all high degree vertices (HDVs) are located in one single path $P=\{v_{1} \to v_{2} \to  \cdots \to v_{k}\}$, called the main path. At each vertex $v_{j}$ (we call then back nodes), we attach a number of paths $P_{q_{1}^{j}},\ldots ,P_{q_{r_{j}}^{j}}$ forming a starlike tree, which is denoted by $T_{j}=[q_{1}^{j}, ...,q_{r_{j}}^{j}], q_{i}^{j}\leq q_{i+1}^{j}$, for $1 \leq j \leq k$ (see Figure~\ref{fig:linear_tree_example} for an illustration).

The class of linear trees was introduced by Johnson, Li  \& Walker in \cite{JOHNSON2014} and further studied by Johnson \& Saiago (see \cite{johnson2018eigenvalues} Chapter 10). Some special cases of linear trees are  caterpillars, where all the HDVs have a number of pendant leafs $P_1$, and the open quipus, where all the HDVs have degree at most three.

\begin{figure}[ht!]
  \centering
  \includegraphics[width=13cm]{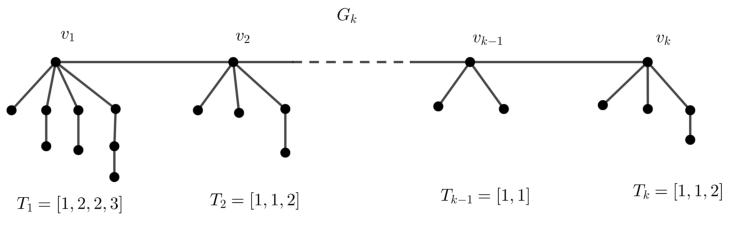}
  \caption{Linear tree $G_{k}=[[1,2,2,3],[1,1,2],\ldots,[1,1],[1,1,2]]$}\label{fig:linear_tree_example}
\end{figure}

We notice that the representation $T_{j}=[0]$ means that the graph $T_{j}$ is formed just by the vertex $v_{j}$, or that $T_{j}$ is empty.

We denote by $\ell(G_{k})=k$, the \emph{length} of $G=[T_{1},\ldots, T_{k}]$, as the number of back nodes in the main path,  by $\omega(T_{j})=r_{j}$, the \emph{width} of $T_{j}=[q_{1}^{j}, \ldots ,q_{r_{j}}^{j}]$, as the number of paths it is composed, and by $h(T_{j})=q_{r_{j}}^{j}$, the height of $T_{j}=[q_{1}^{j}, ...,q_{r_{j}}^{j}]$, as the maximum length of paths in it.

The representation $G_{k}=[T_{1}, ..., T_{k}]$ is almost never unique unless $T_1=T_k=[0]$ and $k$ is fixed, because we may choose another main path by picking paths from $T_1$ and/or $T_k$. For instance $[[2,2,2],[1,2],[1,1]]$ and $[[0],[0],[2,2],[1,2],[1,1]]$ are representations of the same linear tree. Although, in any new representation the length could increase at most by $h(T_{1})+h(T_{k})$.

We consider applying ${\rm Diagonalize}(L,-\mu)$, for $\mu \geq 4.38+$ and $L$ the Laplacian matrix of a linear tree with $k$ back nodes $G_k=[T_1,T_2,\ldots,T_k]$.

When executing ${\rm Diagonalize}(L,-\mu)$, we first apply in $T_i, ~i=1,\ldots, k$, towards the vertex $v_k$, which is the root. We remark that $T_i$ may be empty and in this case there is nothing to be done. If $T_i$ is a non-empty starlike, for each path of $T_i$, we have that leaves receive the value $b_1 = 1-\mu$, while the remaining values satisfy $b_j=2-\mu-\frac{1}{b_{j-1}}$, for $j>1$. We refer to Figure~\ref{fig:Linear} for an illustration.
\begin{figure}[ht!]
  \centering
  \includegraphics[width=13cm]{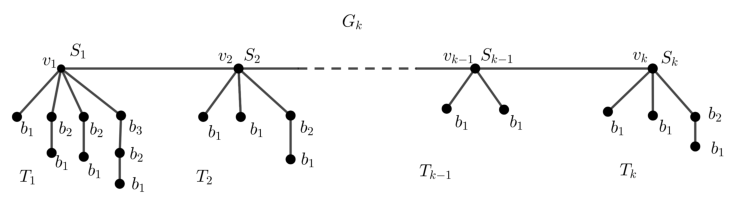}
  \caption{Linear tree with the values obtained by ${\rm Diagonalize}(L,-\mu)$ depicted in each vertex.}\label{fig:Linear}
\end{figure}

It will be convenient to denote, for future reference, the function $\psi: \mathbb{R}\rightarrow \mathbb{R}$ by \begin{equation}\label{eq:psi}
     \psi(t) := 2-\mu -\frac{1}{t}.
   \end{equation}
With this notation, the values in each path of $T_i$ satisfy
\begin{equation}\label{eq:path}
    \left\{ \begin{array}{ll}
               b_1 = 1-\mu\\
               b_{j+1} = \psi(b_j), ~~j>1.
             \end{array}
    \right.
   \end{equation}

From \cite{OliveTrevAppDiff} we know that the iterates $b_j$ obeying Equation~\eqref{eq:path} have the following properties. Let $\theta$ and $\theta'$ be fixed points of $\psi(t)=t$. They are given by
\begin{equation}\label{eq:theta}
\theta:= \frac{-(\mu -2) -\sqrt{(\mu -2)^2 - 4}}{2} \mbox { and } \theta':= \frac{-(\mu -2) +\sqrt{(\mu -2)^2 - 4}}{2}= \theta^{-1}>\theta.
\end{equation}
They are attracting and repelling points, respectively, meaning that for $b_1 \leq  \theta'$, we have $b_j$ converging to $\theta$.
Since $\mu > 4$, we have that $b_1 < \theta'<0$ and, in particular, that all $b_j < 0$. We process each path of $T_i$ until the vertices adjacent to $v_i$. We notice that to process the vertex $v_i$ we need the value resultant from processing $T_i$. We denote by $\omega(T_{i})=r_{i}$ the number of paths of $T_i$ and, for $m=1,\ldots,r_i$, by $b_{i,m}$ the last value output by the algorithm at the $m$-th path of $T_i$. The net result of the starlike $T_i$ is given by
\begin{equation}\label{eq:defdelta}
\delta(T_i,\mu):=\left\{
\begin{array}{ll}
       \sum_{j=1}^{\ell_i} \left(1 -\frac{1}{ b_{i,j}}\right),  \mbox{ for non-empty } T_i \\
       0 \mbox{ if } T_i \mbox{ is empty. }\\
\end{array} \right.
\end{equation}
Continuing applying the algorithm, we go from $v_1$ towards $v_k$, the root of $G_k$. We denote $S_i$, for $i\in\{1,\ldots,k\}$, the value output by  ${\rm Diagonalize}(L,-\mu)$. We have $S_1=1-\mu +\delta(T_1,\mu),~ S_2=2-\mu+\delta(T_2,\mu)-\frac{1}{S_1},~ S_3=2-\mu+\delta(T_2,\mu)-\frac{1}{S_2}$ and so on. For $S_k$ the expression is $S_{k}:= 1-\mu - \frac{1}{S_{k-1}} + \delta(T_k,\mu)$ (see Figure~\ref{fig:Linear} for an illustration). Using the function $\psi$, we notice that $S_i$ obeys the following recurrence.

\begin{equation}\label{eq:Laplacian psi}
  \left\{
  \begin{array}{ll}
    S_{1}=1-\mu + \delta(T_1,\mu),\\
    S_{i}=\psi(S_{i-1})+ \delta(T_2,\mu),\; 2 \leq i \leq k-1,\\
    S_{k}=-1+ \psi(S_{k-1})+ \delta(T_k,\mu).
  \end{array}
  \right.
\end{equation}
We are interested in determining whether $\mu$ is smaller than, equal to, or larger than $\rho_L(G_k)$, the Laplacian spectral radius of $G_k$. Since the values of all $b_j$'s are negative, by Lemma~\ref{lem:loc}, only the signs of the values $S_j$ suffice to determine this. More precisely, consider the function  $$\Pi:=(G_k,\mu) \to \left(S_1, S_2,\ldots,S_k \right),$$
and let $\sign(\Pi)(G_k,\mu)=\left(\sign(S_1), \sign(S_2),\ldots,\sign(S_k) \right)$. From Lemma~\ref{lem:loc} we can state the following result.

\begin{lemma}\label{lem:detmu} With the established notation, we have

\begin{itemize}
   \item[$(a)$] $\rho_L(G_k) < \mu \Leftrightarrow \sign(\Pi)(G_k,\mu)=(-,-,\ldots,-,-)$;
   \item[$(b)$] $\rho_L(G_k) = \mu \Leftrightarrow \sign(\Pi)(G_k,\mu)=(-,-,\ldots,-,0)$;
   \item[$(c)$] $\rho_L(G_k) > \mu \Leftrightarrow \sign(\Pi)(G_k,\mu)$ has a + entry.
\end{itemize}
\end{lemma}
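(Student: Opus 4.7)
The plan is to derive Lemma~\ref{lem:detmu} as a direct corollary of Lemma~\ref{lem:loc} applied to $L=L(G_k)$ with $x=\mu$. The diagonal produced by $\mathrm{Diagonalize}(L,-\mu)$ partitions naturally into two groups: the path values $b_{i,j}$ coming from the starlikes $T_i$, and the main-path values $S_1,\ldots,S_k$ obtained from the recurrence \eqref{eq:Laplacian psi}. My first step is to record that, since $\mu>4$, the initial value $b_1=1-\mu$ lies in the attracting basin $(-\infty,\theta']$ of the negative attractor $\theta$ of $\psi$, so the whole orbit $\{b_j\}$ stays strictly below $0$; in particular every $b_{i,j}<0$. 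Consequently the sign pattern of the full diagonal of $\mathrm{Diagonalize}(L,-\mu)$ is governed exclusively by the signs of $S_1,\ldots,S_k$.

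Part (a) and the $\Leftarrow$ direction of (c) are then immediate from Lemma~\ref{lem:loc}: the inequality $\rho_L(G_k)<\mu$ is equivalent to every diagonal entry being negative, which here reduces to $\sign(S_i)=-$ for every $i$; conversely, whenever some $S_i$ is positive, the diagonal carries a positive entry, forcing $\rho_L(G_k)>\mu$.

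The delicate case is (b). By Lemma~\ref{lem:loc}, $\rho_L(G_k)=\mu$ is equivalent to ``no diagonal entry is positive and at least one is zero.'' Since every $b_{i,j}$ is negative, this translates to ``no $S_i$ is positive and some $S_i$ equals $0$.'' The key additional observation is that such a zero can only occur at $S_k$. Suppose, for contradiction, $S_i=0$ for some $i<k$. Then when the algorithm of Figure~\ref{fig:algo} processes $v_{i+1}$, it detects the child $v_i$ carrying the value $0$ and enters the special branch, overwriting $d_{v_i}\leftarrow 2$. This is a strictly positive diagonal entry, contradicting $\rho_L(G_k)\le\mu$. Therefore $S_i<0$ for every $i<k$ and $S_k=0$, yielding exactly the pattern $(-,\ldots,-,0)$; the converse of (b) and the $\Rightarrow$ direction of (c) then follow from Lemma~\ref{lem:loc} with no further work.

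The main obstacle I anticipate is precisely this ``mid-zero'' subtlety in (b): one must avoid treating the $S_i$ purely as a symbolic iteration of $\psi$, since the recurrence \eqref{eq:Laplacian psi} itself breaks down at any intermediate zero. Instead one must invoke the zero-child branch of Figure~\ref{fig:algo}, which is exactly the mechanism that converts a would-be mid-path zero into an unavoidable positive diagonal entry, and thereby rules such configurations out whenever $\rho_L(G_k)=\mu$.
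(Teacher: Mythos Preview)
Your proof is correct and follows the same route the paper indicates: the paper states the lemma as an immediate consequence of Lemma~\ref{lem:loc} together with the observation that all the starlike values $b_{i,j}$ are strictly negative, and offers no further argument. Your write-up is in fact more careful than the paper's, since you explicitly treat the ``mid-path zero'' subtlety in part~(b) via the special branch of the algorithm in Figure~\ref{fig:algo}, a point the paper leaves entirely implicit.
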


\section{Applying Shearer's approach for Laplacian limit points}
\label{sec:revisit shearer Laplacian}
In the previous section, we set a framework to apply ${\rm Diagonalize}(L(G_{k}), -\mu)$, where $G_k$ is a linear tree and $\mu$ is any real number larger than 4. We are going to use the notation established there, but we observe a subtle difference. Here, from a given number $\mu \geq 4.38+$, we construct a prescribed caterpillar $G_{k}(\mu):=[r_{1}, r_{2}, ..., r_{k}], \; k \geq 2$ (see Figure~\ref{fig:caterpillar}) by adapting Shearer's method to Laplacian matrices. Additionally, we will study the convergence of $\rho_L(G_k(\mu))$.

Let us then fix a real number $\mu \in [4.28+, \infty)$. Shearer's approach needs to determine numbers $r_1, r_2,..., r_k,$ such that $G_k(\mu)=[r_1, r_2,...,r_k]$ has $\rho_L(G_k) < \mu$. In the language of Section~\ref{sec:gennotation}, the linear trees $T_i$ are simply stars, whose paths have length one. As $T_i=[1,1,\ldots,1]$, we have $\delta(T_i,\mu):= \sum_{j=1}^{r_i} \left(1 -\frac{1}{ b_{1}}\right) = \sum_{j=1}^{r_i} \left(1 -\frac{1}{ 1-\mu}\right) = r_{i}\frac{\mu}{\mu -1} = \delta_{i}.$
We recall that, from Lemma~\ref{lem:detmu}, $\rho_{L}(G_{k}) < \mu$ if and only if $S_j <0$ for all $j<k$. As the attracting interval for $\theta$, given by Equation~\eqref{eq:theta}, is $(-\infty, \theta']$ we need to require that $S_j \leq \theta'$ otherwise  $S_{j'} >0 $ for some $j'>j$ contradicting $\rho_{L}(G_{k}) < \mu$.

Following the reasoning of Shearer's construction, we determine the sequential choice of $r_1, r_2,\ldots, r_k $ as follows.\\
Since $S_{1}<0  \text{ if } 1-\mu + r_{1}\frac{\mu}{\mu -1} \leq \theta'$,
we choose
$$r_1:= \max_{r \geq 0} \left\{ 1-\mu + r \frac{\mu}{\mu -1} \leq \theta'\right\} \Leftrightarrow r_1:= \left\lfloor \frac{\mu -1}{\mu} ( \theta' - (1-\mu))\right\rfloor.$$
We point out that here we deal with real inequalities, such as $S_{1}<0$, or equivalently $1-\mu + r_{1}\frac{\mu}{\mu -1} \leq \theta'$. Of course, this inequality is true for $r_{1}=0$. Moreover, if we increase $r$ by one, then the left hand side of the inequality increases by $\frac{\mu}{\mu -1}>0$. Thus, there will be a maximum number $r$ satisfying the inequality. If we assume that $r$ could be a real number, then the equality happens for  the real number $\tilde{r}:= \frac{\mu -1}{\mu} ( \theta' - (1-\mu))$. Clearly, the largest positive integer satisfying this property is $r_1:= \left\lfloor \tilde{r}\right\rfloor.$\\ And this is the choice for $r_1$.
As $S_{2}<0  \text{ if } \psi(S_{1}) + r_{2}\frac{\mu}{\mu -1} \leq \theta'$
we choose
$$r_2:= \max_{r \geq 0} \left\{  \psi(S_{1}) + r \frac{\mu}{\mu -1} \leq \theta'\right\} \Leftrightarrow r_2:= \left\lfloor \frac{\mu -1}{\mu} ( \theta' - \psi(S_{1}))\right\rfloor,$$
and so on. We notice that $r_{k}$ has a distinct expression.
Since $S_{k}<0  \text{ if } -1+ \psi(S_{k-1}) + r_{k}\frac{\mu}{\mu -1} \leq \theta'$, we choose
$$r_k:= \max_{r \geq 0} \left\{ -1+ \psi(S_{k-1})  + r \frac{\mu}{\mu -1} \leq \theta'\right\} \Leftrightarrow r_k:= \left\lfloor \frac{\mu -1}{\mu} ( \theta' +1 - \psi(S_{k-1}))\right\rfloor.$$

\begin{definition}\label{def: Laplacian classic shaerer sequence}
   Given $\mu \geq 4.38+$, let $\theta':= \frac{-(\mu -2) +\sqrt{(\mu -2)^2 - 4}}{2}$. For $\psi(t)=2-\mu-\frac{1}{t}$, we define the classic Laplacian Sharer's sequence as the sequence of caterpillars $G_{k}(\mu):=[r_{1}, r_{2}, ..., r_{k}], \; k \geq 2$ where
   \begin{equation}\label{eq:defclass}
   \left\{
     \begin{array}{ll}
       r_1:= \left\lfloor \frac{\mu -1}{\mu} ( \theta' - (1-\mu))\right\rfloor. \\
       S_{1}=1-\mu + r_{1} \frac{\mu}{\mu -1}\\
       r_{j}:= \left\lfloor \frac{\mu -1}{\mu} ( \theta' - \psi(S_{j-1}))\right\rfloor\\
       S_{j}=\psi(S_{j-1}) + r_{j} \frac{\mu}{\mu -1}\\
       r_{k}:= \left\lfloor \frac{\mu -1}{\mu} ( \theta' +1 - \psi(S_{k-1}))\right\rfloor\\
       S_{k}=-1+ \psi(S_{k-1}) + r_{k} \frac{\mu}{\mu -1}.
     \end{array}
   \right.
   \end{equation}
\end{definition}
\begin{example}\label{ex:Lap5.4} Consider applying Definition~\ref{def: Laplacian classic shaerer sequence} for $\mu=5.4$.  We have   $\theta:= \frac{-(\mu -2) -\sqrt{(\mu -2)^2 - 4}}{2} \approx -3.074772708$ and $\theta':= \frac{-(\mu -2) +\sqrt{(\mu -2)^2 - 4}}{2}= \theta^{-1}\approx -0.32522729>\theta$. We obtain by computation the first 11 terms, as follows.

\noindent $r_1:= \left\lfloor \frac{\mu -1}{\mu} ( \theta' - (1-\mu))\right\rfloor$, gives $r_1= 3$ \\
$S_{1}=1-\mu + r_{1} \frac{\mu}{\mu -1}$, gives $S_1 \approx -0.718181818$\\
$r_{2}:= \left\lfloor \frac{\mu -1}{\mu} ( \theta' - \psi(S_{1}))\right\rfloor$ , gives $r_2=1$ \\
$r_{j}:= \left\lfloor \frac{\mu -1}{\mu} ( \theta' - \psi(S_{j-1}))\right\rfloor$ , gives $r_j=1$ for $j=2,\ldots,10$, while\\
$r_{11}:= \left\lfloor \frac{\mu -1}{\mu} ( \theta' +1 - \psi(S_{10}))\right\rfloor$, where $S_{10} \approx -1.503801894,$ giving $r_{11} =2$.\\

Hence for $k=11$ our $G_k(5.4)$ is given by $$G_k:=[[1,1,1],[1],[1],[1],[1],[1],[1],[1],[1],[1],[1,1]].$$
\end{example}

Before we proceed with the analysis of this process, it is of great importance to understand how it evolves, in particular we want to obtain a relation between the sequence $S_{j}$  for $G_\mu(\mu):=[r_{1}, r_{2}, ..., r_{k-1}, r_{k}]$ comparatively with the one for $G_{k+1}(\mu)$. Let us denote it by $\tilde{S}_{j}$ and $G_{k+1}(\mu):=[\tilde{r}_{1}, \tilde{r}_{2}, ..., \tilde{r}_{k-1}, \tilde{r}_{k}, \tilde{r}_{k+1}]$.

We claim that, if $r_k \geq 1$ then $$\tilde{r}_k =r_k-1.$$

To prove this, we first notice that, by construction $r_{j}=\tilde{r}_{j}$ and $S_{j}=\tilde{S}_{j}$ for $1 \leq j \leq k-1$. We also recall that
\begin{equation}\label{eq:rk}r_k:= \max_{r \geq 0} \left\{ -1+ \psi(S_{k-1})  + r \frac{\mu}{\mu -1} \leq \theta'\right\}.
\end{equation}
We notice that this is equivalent to
\begin{equation}\label{eq:equiv1} \theta' -  \frac{\mu}{\mu-1} < -1 + \psi(S_{k-1})  + r_k \frac{\mu}{\mu -1} \leq \theta'.
\end{equation}
Equation~\eqref{eq:equiv1} is equivalent to
$$ -1+ \psi(S_{k-1})  + r_{k} \frac{\mu}{\mu -1} \leq \theta' \text{ and } -1+ \psi(S_{k-1})  + (r_{k}+1) \frac{\mu}{\mu -1} > \theta',$$
which are equivalent to
$$ \psi(S_{k-1})  + (r_{k} -1) \frac{\mu}{\mu -1} + \frac{1}{\mu-1} \leq \theta' \text{ and }  \psi(S_{k-1})  + r_{k}\frac{\mu}{\mu -1}+ \frac{1}{\mu-1}  > \theta'.$$
Rewriting these, we have
$$\psi(S_{k-1})  + (r_{k}-1) \frac{\mu}{\mu -1} \leq \theta' - \frac{1}{\mu-1}, \text{ and }  \psi(S_{k-1})  + r_{k} \frac{\mu}{\mu -1} > \theta' -\frac{1}{\mu-1} .$$
We claim that these inequalities imply
\begin{equation}\label{eq:equiv2}
\psi(S_{k-1})  + (r_{k}-1) \frac{\mu}{\mu -1} \leq \theta', \text{ and }  \psi(S_{k-1})  + r_{k} \frac{\mu}{\mu -1} > \theta'.\end{equation}
Indeed, as $ \theta-\frac{1}{\mu-1} \leq \theta'$, the first inequality follows. Now if $\psi(S_{k-1})  + r_{k} \frac{\mu}{\mu -1} \leq \theta'$, the first inequality implies that $\frac{\mu}{\mu-1} \leq 0$, which is a contradiction. It follows that, by comparing Equation~\eqref{eq:equiv2} with Equation~\eqref{eq:equiv1} we can write
\begin{equation}\label{eq:equiv3}r_k-1=\max_{r \geq 0} \left\{ \psi(S_{k-1})  + r \frac{\mu}{\mu -1} \leq \theta'\right\}.\end{equation}
Now we notice that, because $\psi(\tilde{S}_{k-1})=\psi(S_{k-1})$, the expression for $\tilde{r}_k$ is
\begin{equation}\label{eq: iquality r_k Laplacian}
  \tilde{r}_k:= \max_{r \geq 0} \left\{ \psi(S_{k-1})  + r \frac{\mu}{\mu -1} \leq \theta'\right\}= r_{k} -1,
\end{equation}
in view of Equation~\eqref{eq:equiv3}.

Analogously to the adjacency case, the procedure given in Definition~\ref{def: Laplacian classic shaerer sequence}, always works. In the sense that it produces a sequence of caterpillars $$G_{k}(\mu):=[r_{1}, r_{2}, ..., r_{k}], \; k \geq 2$$ such that the spectral radius $\rho_{L}(G_{k}(\mu)):= \rho_{L}(G_{k})$ has a limit point smaller than $\mu$. More precisely,  the sequence $\rho_{L}(G_{k})$ is increasing and bounded by $\mu$. Indeed, the part $\rho_{L}(G_{k}) < \mu$ is a trivial consequence of the construction because $S_1,\ldots, S_k <\theta'<0$ then the spectral radius for the Laplacian matrix of $G_{k}$ is necessarily smaller than $\mu$ by Lemma~\eqref{lem:detmu}. The part $\rho_{L}(G_{k}) < \rho_{L}(G_{k+1})$ is not quiet obvious because the choice of $r_k$ for $G_{k}$ is possibly different from the one for $G_{k+1}$, unless $r_{k}=0$. In that case $G_{k}$ is necessarily a subgraph of $G_{k+1}$ and $\rho_{L}(G_{k}) < \rho_{L}(G_{k+1})$. Thus, we assume that $r_{k}\geq 1$.  In order to verify the inequality $\rho_{L}(G_{k}) < \rho_{L}(G_{k+1})$, we recall that, from Equation~\eqref{eq: iquality r_k Laplacian}, we have $\tilde{r}_k = r_{k} -1$ meaning that $G_{k}$ is necessarily a subgraph of $G_{k+1}$ and then $\rho_{L}(G_{k}) < \rho_{L}(G_{k+1})$ again. In particular there exists
$\displaystyle\lim_{k \to \infty} \rho_{L}(G_k)=\gamma(\mu) \leq \mu$. We summarize the findings of this section, stating the following result.

\begin{teore}\label{thm:classLap} Let $\mu \in  [4.38+,\infty)$. The sequence $G_k(\mu)$ of caterpillars given by Equation~\eqref{eq:defclass} produces a limit point of $L$-spectral radii: there exists
$$\displaystyle\lim_{k \to \infty} \rho_{L}(G_k(\mu))=\gamma(\mu) \leq \mu.$$
\end{teore}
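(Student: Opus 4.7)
The plan is to combine the observations already in place before the statement. The first order of business is to establish the upper bound $\rho_L(G_k(\mu)) < \mu$ for every $k$. By Definition~\ref{def: Laplacian classic shaerer sequence}, each $r_j$ is chosen as the largest nonnegative integer forcing $S_j \leq \theta'$ (or $S_k \leq \theta'$ using the terminal formula). Since $\mu > 4$ gives $\theta' < 0$, the diagonalizing sequence satisfies $S_1, \ldots, S_k < 0$, and all the internal values $b_j$ along the pendant paths are already negative because $b_1 = 1 - \mu < 0$ and $\psi$ maps $(-\infty, \theta']$ into itself. Hence $\sign(\Pi)(G_k, \mu) = (-, -, \ldots, -)$, and Lemma~\ref{lem:detmu}(a) yields $\rho_L(G_k(\mu)) < \mu$.

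The second step is monotonicity: I need $\rho_L(G_k(\mu)) \leq \rho_L(G_{k+1}(\mu))$ for every $k$. The key input, already worked out in the paragraphs preceding the theorem, is Equation~\eqref{eq: iquality r_k Laplacian}: if $r_k \geq 1$ then $\tilde r_k = r_k - 1$, while clearly $\tilde r_j = r_j$ and $\tilde S_j = S_j$ for $j \leq k-1$. Using this I would exhibit $G_k$ as a subgraph of $G_{k+1}$ by the natural embedding that identifies the first $k-1$ back nodes and their leaves, maps $\tilde r_k = r_k - 1$ leaves at $v_k$ identically, and sends the remaining leaf at $v_k$ in $G_k$ to the new back node $v_{k+1}$ in $G_{k+1}$ (which is connected to $v_k$ by an edge, and hence is a valid image of a pendant neighbour of $v_k$). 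In the easy case $r_k = 0$ the embedding is even more direct. Because $G_k$ is a proper subgraph of $G_{k+1}$ on a subset of vertices, the edge-addition/vertex-addition monotonicity of the Laplacian spectral radius (for bipartite connected graphs; equivalently the signless Laplacian Perron--Frobenius applied to trees) gives the strict inequality $\rho_L(G_k) < \rho_L(G_{k+1})$.

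Combining the two steps, the sequence $(\rho_L(G_k(\mu)))_{k \geq 2}$ is strictly increasing and bounded above by $\mu$, so it converges to some $\gamma(\mu) \leq \mu$, which is precisely the statement of the theorem.

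The step I expect to require the most care is the subgraph embedding when $r_k \geq 1$: one has to translate the arithmetic identity $\tilde r_k = r_k - 1$ into a combinatorial inclusion that legitimately identifies a pendant leaf of $v_k$ in $G_k$ with the newly attached back node $v_{k+1}$ of $G_{k+1}$, and then invoke an appropriate Laplacian interlacing/monotonicity result to upgrade the inclusion to a strict inequality of spectral radii. Everything else (the computation that $S_j < \theta' < 0$ at every step, and the convergence via monotone bounded) is essentially immediate from the setup in Section~\ref{sec:gennotation} and Equation~\eqref{eq:defclass}.
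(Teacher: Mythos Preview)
Your proposal is correct and follows essentially the same approach as the paper: first use the construction to force $S_1,\ldots,S_k<\theta'<0$ and invoke Lemma~\ref{lem:detmu}(a) to get $\rho_L(G_k)<\mu$, then use Equation~\eqref{eq: iquality r_k Laplacian} (the identity $\tilde r_k=r_k-1$ when $r_k\ge1$, and the trivial case $r_k=0$) to realize $G_k$ as a subgraph of $G_{k+1}$ and conclude strict monotonicity, hence convergence of the bounded increasing sequence. Your explicit description of the embedding (identifying one pendant leaf at $v_k$ with the new back node $v_{k+1}$) is a bit more detailed than the paper, which simply asserts the subgraph relation, but the argument is the same.
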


The question is: what are the limit points produced by this process?

\begin{definition}\label{def:classic laplacian shearer limit points}
   We denote by $\Omega_{1} \subseteq [4.38+, \infty)$ the set of all Laplacian limit points produced by Shearer's method
   $$\Omega_{1}:=\{ \gamma \; | \; \lim_{k \to \infty} \rho_{L}(G_k(\mu))=\gamma\}.$$
\end{definition}

Similar to the adjacency case studied by Shearer~\cite{shearer1989distribution}, where he shows that this method produces an $A$-limit point for any real number in $[\sqrt{2+\sqrt{5}}, \infty)$, it is natural to conjecture that $\Omega_{1} = [4.38+, \infty)$.

Unlike the adjacency case, however, we show in the sequel that there are points where $\gamma(\mu) < \mu$. In fact, we will provide a whole interval $I$, for which the sequence of caterpillars $G_k(\mu)$, given by Definition~\ref{def: Laplacian classic shaerer sequence} is the same for every $\mu \in I$. This means $G_k(\mu)$ has Laplacian spectral radius converging to the same value $\gamma < \mu$.

This only shows that the classic Shearer's caterpillar construction for the Laplacian version is not powerful enough to prove the density of Laplacian spectral radii in $[4.38+, \infty)$. In subsequent sections of this note, we extend this method and show how this extension obtains more accumulation points.

\subsection{A nasty interval} \label{sec:bad}
In this section we study the spectral radius of the caterpillar $G_k,~~k\geq 3$ given by $$G_k:=[[1,1,1],[1],[1],\ldots, [1],[1,1]].$$
We notice this was obtained in Example~\ref{ex: random gen shearer 5.4} as the classic Laplacian Shearer's construction with $\mu=5.4$ and $k=11$. We first consider applying {\rm Diagonalize}$(G_k,-\mu)$, for $\mu > 4.38+$,  using the notation established in Section~\ref{sec:gennotation}, disregarding the fact that this  sequence is generated by some  actual $\mu$.

Since the starlike trees $T_i$ are actually stars whose number of rays are $r_1=3, r_j=1, ~2 \leq j \leq k-1$, and $r_k=2$, the application of Equation~\eqref{eq:Laplacian psi} gives
$S_{1}= 1-\mu + 3 \frac{\mu }{\mu  -1},~~ S_{j}= 2-\mu - \frac{1}{S_{j-1}} + \frac{\mu }{\mu  -1}, ~2 \leq j \leq k-1$ and $S_{k}=1-\mu - \frac{1}{S_{k-1}} + 2\frac{\mu}{\mu -1}$. For convenience we let $\tilde{\psi}(t)=(2-\mu+\frac{\mu}{\mu-1})-\frac{1}{t}$, and rewrite as the following recurrence.
\begin{equation}\label{eq:badex}
\left\{\begin{array}{ll}
         S_{1}= 1-\mu + 3 \frac{\mu }{\mu  -1},\\
         S_{j}= \tilde{\psi}(S_{j-1}),~2 \leq j \leq k-1,\\
         S_k = \tilde{\psi}(S_{k-1})+\frac{1}{\mu-1}
       \end{array}
\right.
\end{equation}

The fixed points $\psi(t)=t$ are $\theta$ and $\theta'$ given by Equation~\eqref{eq:theta}, while the fixed points of $\tilde{\psi}(t)=t$ are given by
\begin{equation}\label{eq:sigma}
\left\{\begin{array}{ll}
  \sigma=\frac{(2- \mu+ \frac{\mu}{\mu -1})- \sqrt{(2- \mu+ \frac{\mu}{\mu -1})^2-4}}{2}\\
  \sigma'=\frac{(2- \mu+ \frac{\mu}{\mu -1})+ \sqrt{(2- \mu+ \frac{\mu}{\mu -1})^2-4}}{2}=\sigma^{-1}.
\end{array}
\right.
\end{equation}

\begin{lemma}\label{lem:u*} Let $4<\mu<6 $. Let $\mu_*$ and $\mu^*$ be, respectively, the solutions of $S_1=\sigma'$ and $\theta^{-1}-\frac {\mu}{\mu-1} =\sigma$.
\begin{itemize}
  \item[$(a)$] $\mu_*=\frac{5+\sqrt{33}}{2}=5.3722+$ is the largest root of  the polynomial $x^2-5x-2$;
  \item[$(b)$] $\mu^*=5.4207+$ is the largest root of $2\,{x}^{4}-14\,{x}^{3}+19\,{x}^{2}-10\,x-1.$
\end{itemize}
\end{lemma}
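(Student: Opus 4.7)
The plan is to translate each equality $S_1=\sigma'$ and $\theta^{-1}-\tfrac{\mu}{\mu-1}=\sigma$ into a polynomial identity in $\mu$ by exploiting the fact that $\sigma,\sigma'$ satisfy a known quadratic, and then to eliminate $\theta'$ (resp.\ $\sigma$) to obtain a single polynomial in $\mu$ alone.

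For part $(a)$: By Equation~\eqref{eq:sigma}, $\sigma'$ is a root of $t^{2}-at+1=0$, where $a:=2-\mu+\tfrac{\mu}{\mu-1}$. Thus the condition $S_{1}=\sigma'$ is equivalent (once one verifies that the resulting root is $\sigma'$ rather than $\sigma$) to $S_{1}^{2}-aS_{1}+1=0$. Writing $S_{1}=\tfrac{-\mu^{2}+5\mu-1}{\mu-1}$ and $a=\tfrac{-\mu^{2}+4\mu-2}{\mu-1}$ and clearing denominators, the identity collapses (after a pleasant cancellation that pulls out the factor $S_1 - (-\mu^2+4\mu-2)/(\mu-1) = (\mu+1)/(\mu-1)$) to $-\mu(\mu^{2}-5\mu-2)=0$. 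Discarding the spurious root $\mu=0$, the unique positive root of $\mu^{2}-5\mu-2$ is $\tfrac{5+\sqrt{33}}{2}=5.3722+$. To confirm we really are hitting $\sigma'$ and not $\sigma$: at this value both fixed points are negative (since $a<0$, $\sigma\sigma'=1$), and a direct evaluation gives $S_{1}\approx -0.686$, which is the less negative root, i.e.\ $\sigma'$.

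For part $(b)$: Set $s:=\tfrac{\mu}{\mu-1}$ and write the defining equation as $\sigma=\theta'-s$ (using $\theta^{-1}=\theta'$). Since $\sigma^{2}-a\sigma+1=0$ with $a=2-\mu+s$, substitute $\sigma=\theta'-s$ and expand; the quadratic part $\theta'^{2}-(2-\mu)\theta'+1$ vanishes because $\theta'$ itself is a root of $t^{2}-(2-\mu)t+1=0$. This reduces the system to a \emph{linear} equation in $\theta'$, namely
\begin{equation*}
-3s\theta'+2s^{2}+(2-\mu)s=0,\qquad\text{i.e.}\qquad \theta'=\frac{-\mu^{2}+5\mu-2}{3(\mu-1)}.
\end{equation*}
Substituting this expression back into $\theta'^{2}-(2-\mu)\theta'+1=0$ and multiplying through by $9(\mu-1)^{2}$ produces the quartic
\begin{equation*}
2\mu^{4}-14\mu^{3}+19\mu^{2}-10\mu-1=0,
\end{equation*}
up to an overall sign. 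A direct numerical check (or a Sturm-sequence argument, or simply a sign change of the polynomial on, say, $[5.42,5.43]$) locates the largest real root at $\mu^{*}=5.4207+$; that this is indeed the $\sigma$-branch (and not $\sigma'$) is confirmed by the fact that $\theta'-s<0$ is the smaller fixed point at the relevant value of $\mu$, matching the defining convention.

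The only real obstacle is bookkeeping during the algebraic simplification in $(b)$—keeping track of signs when expanding $(\theta'-s)^{2}$ and making sure the correct branch of each quadratic is being selected so that the polynomial roots identified are genuinely $\mu_*$ and $\mu^*$ and not extraneous solutions introduced by squaring. Everything else is a routine, if somewhat tedious, manipulation of rational expressions in $\mu$.
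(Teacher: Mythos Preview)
Your proof is correct and follows essentially the same route as the paper: both parts reduce the defining equations to polynomial identities in $\mu$ and then identify the relevant root in the interval $(4,6)$. Your elimination in part~(b)---using that $\theta'$ satisfies $t^{2}-(2-\mu)t+1=0$ to collapse the substitution to a linear equation in $\theta'$ before plugging back---is a slightly cleaner bookkeeping device than the paper's terse ``transform into a polynomial'' instruction, and your explicit branch checks (confirming $S_1=\sigma'$ rather than $\sigma$, and $\theta'-s=\sigma$ rather than $\sigma'$) fill in a verification the paper leaves implicit.
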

\begin{proof}
$(a)$ $~S_1=\sigma'$ is equivalent to
$$1-\mu + 3 \frac{\mu}{\mu -1}= \frac{(2- \mu+ \frac{\mu}{\mu -1})+ \sqrt{(2- \mu+ \frac{\mu}{\mu -1})^2-4}}{2},$$
which can be transformed into the  polynomial $\mu^2-5\mu-2$. In the range $4<\mu<6$, $\mu_{*}$ is the only root of the polynomial.\\
$(b)$ ~Now, for $\theta^{-1}-\frac {\mu}{\mu-1} =\sigma$ we can perform the same computations transforming
$$\frac{(2- \mu)+ \sqrt{(2- \mu)^2-4}}{2}-\frac {\mu}{\mu-1} = \frac{(2- \mu+ \frac{\mu}{\mu -1})- \sqrt{(2- \mu+ \frac{\mu}{\mu -1})^2-4}}{2}$$
into a polynomial equation as prescribed. Performing that we obtain that $\mu^{*}$ is the largest root of $2\,{\mu}^{4}-14\,{\mu}^{3}+19\,{\mu}^{2}-10\,\mu-1.$

 \end{proof}

\begin{lemma}\label{lem:convex} Let $G_k$ be the caterpillar $G_k:=[[1,1,1],[1],[1],\ldots, [1],[1,1]]$ and $\mu_k=\rho_L(G_k)$ be the spectral radius of $G_k$. Then
$$\displaystyle\lim_{k \to \infty} \rho_{L}(G_k)=\mu_{*}.$$
\end{lemma}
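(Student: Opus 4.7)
The plan is to sandwich: I will show that $\mu_*$ is an upper bound for every $\rho_L(G_k)$, and that every value below $\mu_*$ is eventually exceeded. By Lemma~\ref{lem:detmu}, $\rho_L(G_k)<\mu$ is equivalent to $S_1(\mu),\ldots,S_k(\mu)<0$, where $S_j(\mu)$ is determined by the recurrence~\eqref{eq:badex}, so everything reduces to tracking the sign of these iterates as $\mu$ varies near $\mu_*$.

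\textbf{Upper bound.} First I would plug $\mu=\mu_*$ into~\eqref{eq:badex}. The value $\mu_*$ is defined in Lemma~\ref{lem:u*}(a) precisely by $S_1(\mu_*)=\sigma'$; since $\sigma'$ is a fixed point of $\tilde{\psi}$, induction immediately gives $S_j(\mu_*)=\sigma'<0$ for $1\leq j\leq k-1$. For the last term,
\[
S_k(\mu_*)=\tilde{\psi}(\sigma')+\tfrac{1}{\mu_*-1}=\sigma'+\tfrac{1}{\mu_*-1}=\frac{-\mu_*^{\,2}+5\mu_*}{\mu_*-1}=\frac{-2}{\mu_*-1}<0,
\]
where the third equality expands $\sigma'=1-\mu_*+3\mu_*/(\mu_*-1)$ and the fourth uses the defining identity $\mu_*^{\,2}-5\mu_*-2=0$ from Lemma~\ref{lem:u*}(a). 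Hence Lemma~\ref{lem:detmu}(a) gives $\rho_L(G_k)<\mu_*$ for every $k\geq 3$.

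\textbf{Lower bound via dynamics.} Next I would analyze~\eqref{eq:badex} for $\mu$ slightly below $\mu_*$. A short computation shows $S_1(\mu)$ is strictly decreasing in $\mu$ while $\sigma'(\mu)$ is strictly increasing, so $S_1(\mu)\in(\sigma'(\mu),0)$ whenever $\mu$ lies in a left-neighborhood of $\mu_*$. The fixed point $\sigma'$ is repelling for $\tilde{\psi}$: from $\sigma\sigma'=1$ with $\sigma<-1<\sigma'<0$ (which holds near $\mu_*$ since $\mu_*>3+\sqrt{5}$) one has $\tilde{\psi}'(\sigma')=1/(\sigma')^2>1$, and more strongly $\tilde{\psi}'(t)=1/t^2>1/(\sigma')^2$ throughout $(\sigma',0)$. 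By the mean value theorem, as long as $S_j(\mu)$ stays in $(\sigma',0)$,
\[
S_{j+1}(\mu)-\sigma' \;>\; \frac{1}{(\sigma')^2}\bigl(S_j(\mu)-\sigma'\bigr),
\]
so $(S_j(\mu)-\sigma')$ grows geometrically. Being bounded above by $|\sigma'|$ while inside $(\sigma',0)$, the sequence must leave $(\sigma',0)$ in a finite number $N(\mu)$ of steps, yielding $S_{N(\mu)}(\mu)\geq 0$. Then for every $k>N(\mu)$, Lemma~\ref{lem:detmu}(c) forces $\rho_L(G_k)\geq\mu$.

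\textbf{Conclusion and obstacles.} For any $\varepsilon>0$, choosing $\mu=\mu_*-\varepsilon$ yields $\mu_*-\varepsilon\leq\rho_L(G_k)<\mu_*$ for all $k>N(\mu_*-\varepsilon)$, so $\rho_L(G_k)\to\mu_*$. The main obstacle is the dynamics step: verifying that $\sigma'$ is hyperbolically repelling (relying on the hidden relation $\sigma\sigma'=1$ with $|\sigma'|<1$) and that iterates cannot remain trapped in $(\sigma',0)$ indefinitely. A minor subtlety is the degenerate case $S_{N(\mu)}(\mu)=0$: it triggers the swap branch of Diagonalize, which still inserts a positive diagonal entry later and preserves the conclusion via Lemma~\ref{lem:detmu}. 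Apart from that, the rest is routine calculation with the explicit formulas for $\mu_*$, $\sigma'$, and $S_1$.
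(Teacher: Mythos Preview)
Your proof is correct, and it follows a genuinely different route from the paper's. The paper works at the moving value $\mu_k=\rho_L(G_k)$: it invokes the closed formula for the iterates of $\tilde\psi$ from \cite{OliveTrevAppDiff}, writes the condition $S_k(\mu_k)=0$ explicitly in terms of $\sigma_k$, and lets $k\to\infty$ in that algebraic identity to extract the limiting equation $(S_1-\sigma^{-1})(\sigma+\tfrac{1}{\mu-1})=0$, whose relevant factor gives $\mu_*$. Your argument is instead a sandwich: you evaluate the recurrence directly at $\mu_*$ to get the upper bound, and for the lower bound you exploit only the qualitative fact that $\sigma'$ is a hyperbolically repelling fixed point of $\tilde\psi$ (via $\sigma\sigma'=1$, $|\sigma'|<1$) so that $S_j(\mu)$ is expelled from $(\sigma',0)$ in finitely many steps when $\mu<\mu_*$.

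What each approach buys: yours is more elementary, since it avoids the explicit solution formula from \cite{OliveTrevAppDiff} and needs only monotonicity of $\tilde\psi$ and the mean value theorem; it also makes transparent \emph{why} the limit is exactly $\mu_*$ (it is the threshold at which $S_1$ crosses $\sigma'$). The paper's approach, on the other hand, derives the limiting equation without knowing the answer in advance, which is methodologically useful when the target value is not already identified. Two minor points of presentation: your phrase ``for any $\varepsilon>0$'' in the conclusion should strictly be ``for any sufficiently small $\varepsilon>0$'' (this is all that is needed for the limit), and the degenerate case $S_{N(\mu)}(\mu)=0$ is more cleanly dispatched by noting one may replace $\mu$ by any $\mu'\in(\mu_*-\varepsilon,\mu)$ for which the first nonnegative iterate is strictly positive.
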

\begin{proof} We consider the auxiliary graphs $H_{k}(\mu):=[[1,1,1],[1],\ldots,[1],[1]], \; k \geq 3$. By applying  ${\rm Diagonalize}(H_{k}, -\mu_{k})$ with $\mu_{k}=\rho_{L}(G_{k})$, we obtain the sequence of values $W_{1},\ldots, W_{k-1}, W_{k}$,
$$W_{1}= 1-\mu_{k} + 3 \frac{\mu_{k}}{\mu_{k} -1} <0$$
$$W_{j+1}:= \tilde{\psi}(W_{j}),\; 1 \leq j,$$
where $\tilde{\psi}(t):= (2- \mu_{k}+ \frac{\mu_{k}}{\mu_{k} -1}) -\frac{1}{t}, \; t \neq 0$. From \cite{OliveTrevAppDiff} we know that a closed formula $W_j$ is as follows.
$$W_{j}:=\sigma_{k}  +\frac{\sigma_{k}^{-1} -\sigma_{k}}{\beta_{k}\sigma_{k}^{2j -2}+1},\; 1 \leq j,$$
where $\beta_{k}:=\frac{\sigma_{k}^{-1} -\sigma_{k}}{W_{1} -\sigma_{k}} -1$ and $\sigma_{k}=\frac{(2- \mu_{k}+ \frac{\mu_{k}}{\mu_{k} -1})- \sqrt{(2- \mu_{k}+ \frac{\mu_{k}}{\mu_{k} -1})^2-4}}{2}$. Since $H_{k}$ and $G_{k}$ only differ in the last term, applying ${\rm Diagonalize}(G_{k}, -\mu_{k})$, we obtain a sequence of values $S_{1}=W_{1},\ldots, S_{k-1}=W_{k-1}$ and $S_{k}= W_{k} + \frac{1}{\mu_{k} -1} =0$  because $\mu_{k}:=\rho_{L}(G_{k})$. An easy computation transforming this equation into a polynomial with respect to $\sigma_{k}$ shows that taking limit when $k\to\infty$ in $W_{k} + \frac{1}{\mu_{k} -1} =0$, we obtain that the limit point $\mu:=\lim_{k \to \infty} \rho_{L}(G_k )$ satisfies $(\sigma + \frac{1}{\mu -1})\cdot(S_{1} - \sigma^{-1})=0$.

Indeed, $$S_{k}= W_{k} + \frac{1}{\mu_{k} -1} =0 \Leftrightarrow W_{k} + \frac{1}{\mu_{k} -1} =0$$
$$\sigma_{k}  +\frac{\sigma_{k}^{-1} -\sigma_{k}}{\beta_{k}\sigma_{k}^{2j -2}+1} + \frac{1}{\mu_{k} -1} =0$$
$$\sigma_{k}  +\frac{\sigma_{k}^{-1} -\sigma_{k}}{ \left(\frac{\sigma_{k}^{-1} -\sigma_{k}}{W_{1} -\sigma_{k}} -1\right)\sigma_{k}^{2j -2}+1} + \frac{1}{\mu_{k} -1} =0.$$
Now we just transform this into a polynomial and use that $\sigma_{k}^{2j -2} \to \infty$. The remaining equation is $(\sigma_k + \frac{1}{\mu -1})\cdot (W_{1} - \sigma_k^{-1}) =0$. As $W_{1}=S_{1}$ and since $\sigma_k + \frac{1}{\mu -1}$ has no solution for $\mu>4$, we obtain the equation $S_{1} = \sigma_k^{-1}$, which, by Lemma~\ref{lem:u*} is $\mu_*$.
\end{proof}

\begin{lemma}\label{lem:interv} Let $\mu_*=5.3722+$ and $\mu^*=5.4207+$ be as in Lemma~\ref{lem:u*} and let $I=[\mu_*,\mu^*]$. Then for all $\mu \in I$, the classic Laplacian Shearer's  sequence given by Definition~\ref{def: Laplacian classic shaerer sequence} is $G_k(\mu):=[[1,1,1],[1],[1],\ldots, [1],[1,1]].$
\end{lemma}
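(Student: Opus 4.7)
The proof strategy is to verify, term by term, that the recursions in Definition~\ref{def: Laplacian classic shaerer sequence} produce $r_1=3$, $r_j=1$ for $2\le j\le k-1$, and $r_k=2$ for every $\mu\in I$ and every $k\ge 3$. Each intermediate equation $r_j=1$ is equivalent to $S_{j-1}$ lying in the \emph{invariant window} $W_\mu:=(\theta'-\frac{\mu}{\mu-1},\,\theta']$, in which case the recurrence simplifies to $S_j=\tilde\psi(S_{j-1})$ with $\tilde\psi(t)=(2-\mu+\frac{\mu}{\mu-1})-\frac{1}{t}$. Thus the lemma reduces to three tasks: (a) $S_1\in W_\mu$, so $r_1=3$; (b) $W_\mu$ is forward-invariant under $\tilde\psi$, so $r_j=1$ in the middle; (c) the value $S_{k-1}$ produced by the iteration satisfies the closing inequalities for $r_k=2$, uniformly in $k$.

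For (a), both $S_1\le\theta'$ and $S_1+\frac{\mu}{\mu-1}>\theta'$ are polynomial inequalities in $\mu$ that are verified directly on $I$; at the left endpoint $\mu_*$ the first reduces to $\sigma'\le\theta'$, which follows from $|\sigma'|>|\theta'|$ (a consequence of $\sigma\sigma'=\theta\theta^{-1}=1$ together with $|\sigma|<|\theta|$, itself a direct comparison of the governing constants $c=2-\mu+\frac{\mu}{\mu-1}$ and $2-\mu$ via $|c|<|2-\mu|$). For (b), I would exploit the fixed-point dynamics of $\tilde\psi$: since $\tilde\psi'(t)=1/t^2$ and $|\sigma|>1>|\sigma'|$, the point $\sigma$ is attracting and $\sigma'$ is repelling, with basin of attraction $(-\infty,\sigma')$ along which orbits approach $\sigma$ monotonically. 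The condition $\mu\ge\mu_*$ is equivalent, by definition of $\mu_*$, to $S_1\le\sigma'$ (placing $S_1$ in the basin); the condition $\mu\le\mu^*$ is equivalent to $\sigma\ge\theta'-\frac{\mu}{\mu-1}$ (placing the attractor inside $W_\mu$). A short derivative check on $\mu\mapsto S_1(\mu)-\sigma(\mu)$ shows $S_1>\sigma$ throughout $I$, so $S_1\in(\sigma,\sigma']$, the orbit decreases monotonically, and every iterate is trapped in $(\sigma,S_1]\subseteq W_\mu$, settling (b).

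For (c), I unwind Definition~\ref{def: Laplacian classic shaerer sequence} into the paired inequality $\theta'+1-3\frac{\mu}{\mu-1}<\psi(S_{k-1})\le\theta'+1-2\frac{\mu}{\mu-1}$. Since $\psi$ is monotone increasing on the negative reals and $S_{k-1}\in(\sigma,S_1]$ uniformly in $k$ by (b), it suffices to check the two endpoint bounds $\psi(S_1)\le\theta'+1-2\frac{\mu}{\mu-1}$ and $\psi(\sigma)>\theta'+1-3\frac{\mu}{\mu-1}$. The lower one is automatic: substituting $\psi(\sigma)=\sigma-\frac{\mu}{\mu-1}$ (from $\tilde\psi(\sigma)=\sigma$), it simplifies to $\sigma>\theta'+1-2\frac{\mu}{\mu-1}$, a strict weakening of the inequality $\sigma\ge\theta'-\frac{\mu}{\mu-1}$ guaranteed by $\mu\le\mu^*$, since $\frac{\mu}{\mu-1}>1$ on $I$. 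The upper bound is a single polynomial inequality in $\mu$, tractable by a direct check at $\mu_*$ and $\mu^*$ combined with monotonicity in $\mu$.

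The main obstacle I anticipate is the upper half of (c): forward-invariance of $W_\mu$, which drives (b), does \emph{not} by itself imply that $\psi(S_{k-1})$ is trapped in the narrower window required for $r_k=2$ (as opposed to $r_k=1$ or $r_k\ge 3$). The interval $I$ is essentially defined so that all three tasks hold simultaneously, and one must verify that the upper bound $\psi(S_1)\le\theta'+1-2\frac{\mu}{\mu-1}$ does not fail anywhere inside $I$. After clearing denominators this becomes a low-degree polynomial inequality whose sign can be confirmed at the endpoints $\mu_*$ and $\mu^*$ and propagated through $I$ by monotonicity of the individual terms $S_1(\mu)$, $\theta'(\mu)$ and $\frac{\mu}{\mu-1}$.
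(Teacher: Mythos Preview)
Your proposal is correct and follows essentially the same route as the paper: both proofs hinge on trapping the orbit $S_1,S_2,\ldots$ of $\tilde\psi$ inside the interval cut out by the fixed points $\sigma,\sigma'$ and checking that this interval sits inside the window $W_\mu=(\theta'-\tfrac{\mu}{\mu-1},\theta']$, with the endpoints $\mu_*,\mu^*$ of $I$ arising exactly from the equalities $S_1=\sigma'$ and $\sigma=\theta'-\tfrac{\mu}{\mu-1}$.

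The differences are stylistic rather than structural. The paper verifies the chain of inequalities $\theta'-\tfrac{\mu}{\mu-1}<\sigma<S_1<\sigma^{-1}<\theta'$ graphically and invokes the closed formula for $S_j$ from \cite{OliveTrevAppDiff}, whereas you propose analytic/polynomial checks and a pure monotone-dynamics argument (orbit decreasing in $(\sigma,\sigma']$). For the terminal value $r_k$, the paper short-circuits via the relation $\tilde r_k=r_k-1$ proved just before Theorem~\ref{thm:classLap} (so ``interior value $1$'' $\Rightarrow$ ``terminal value $2$''), while you unwind the floor directly and reduce to the two endpoint checks $\psi(S_1)\le\theta'+1-2\tfrac{\mu}{\mu-1}$ and $\psi(\sigma)\ge\theta'+1-3\tfrac{\mu}{\mu-1}$; your observation that the second of these is a strict weakening of $\sigma\ge\theta'-\tfrac{\mu}{\mu-1}$ is a nice touch that the paper does not make explicit. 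Either packaging works.
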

\begin{proof}
We consider two facts, the first one is
$$\theta^{-1}-\frac {\mu}{\mu-1}<1-\mu+3\frac{\mu}{\mu-1} <\theta^{-1}$$
for $ 5.09807+ < \mu < 6.05505+$ (see Figure~\ref{fig:ineq-example-05}), meaning that in this interval every $G_{k}$ begins with $[[1,1,1], ?,?,\ldots]$ because $I \subset [5.09807+ , 6.05505+]$.
\begin{figure}[ht!]
  \centering
  \includegraphics[width=5cm]{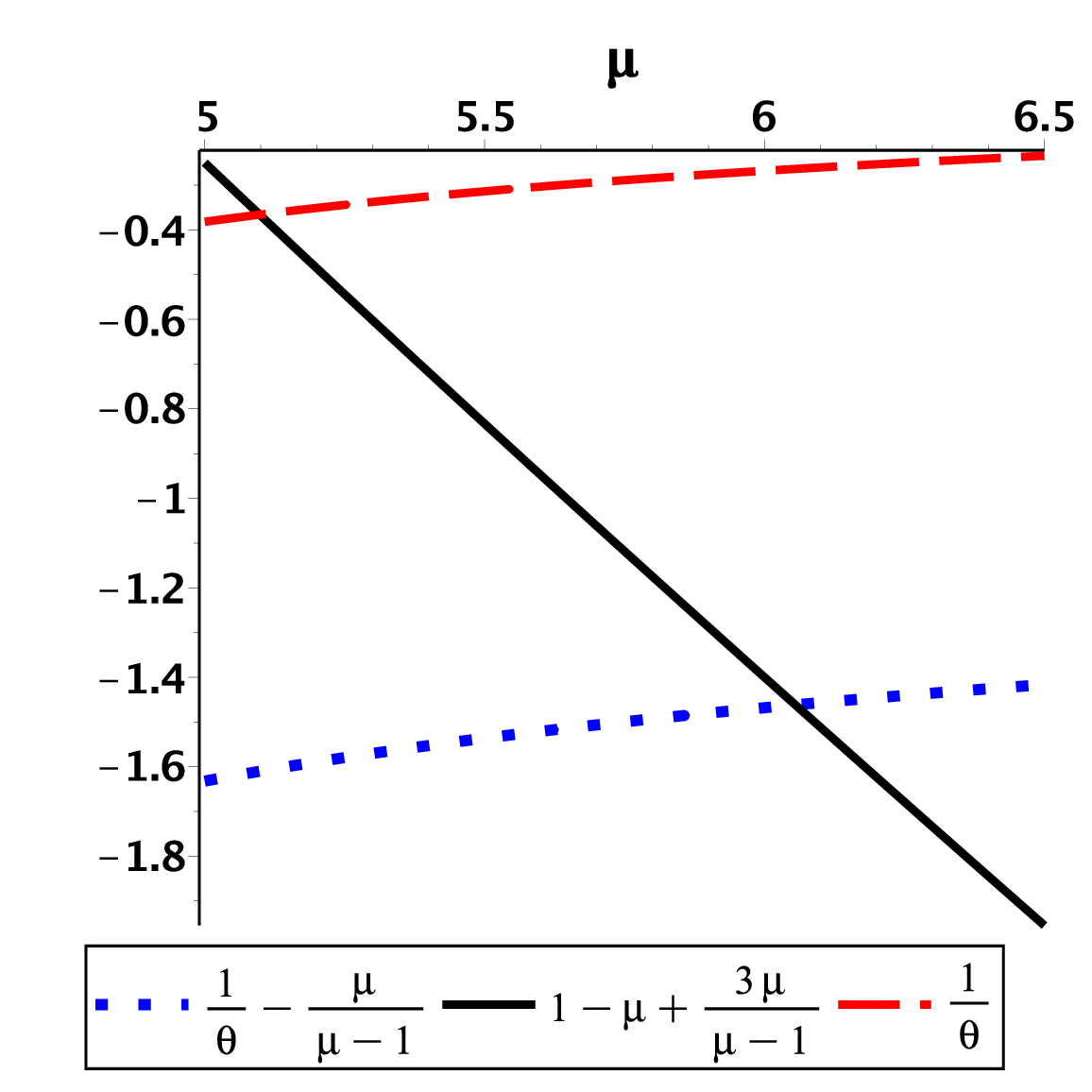}
  \caption{Inequality $\theta^{-1}-\frac {\mu}{\mu-1} <1-\mu+3\frac {\mu}{\mu-1} <\theta^{-1}.$}\label{fig:ineq-example-05}
\end{figure}
By applying  ${\rm Diagonalize}(G_{k}, -\mu)$ with $\mu \in I$, we obtain a sequence of values $S_{1}, ..., S_{k-1}, S_{k}$ where
$S_{1}= 1-\mu + 3 \frac{\mu }{\mu  -1}  \in (\theta^{-1}-\frac {\mu}{\mu-1} , \theta^{-1}]$, that is $r_{1}=3$.

The second  fact is the inequality
$$\theta^{-1}-\frac {\mu}{\mu-1} <\sigma <1-\mu+3\frac {\mu}{\mu-1} <\sigma^{-1}<\theta^{-1},$$
where $\sigma$ and $\sigma^{-1}$  are defined in Equation~\eqref{eq:sigma}. This is easily verified from the graphics given in Figure~\ref{fig:ineq-example-01020304}, since those are functions of $\mu$.

\begin{figure}[ht!]
  \centering
  \includegraphics[width=4.5cm]{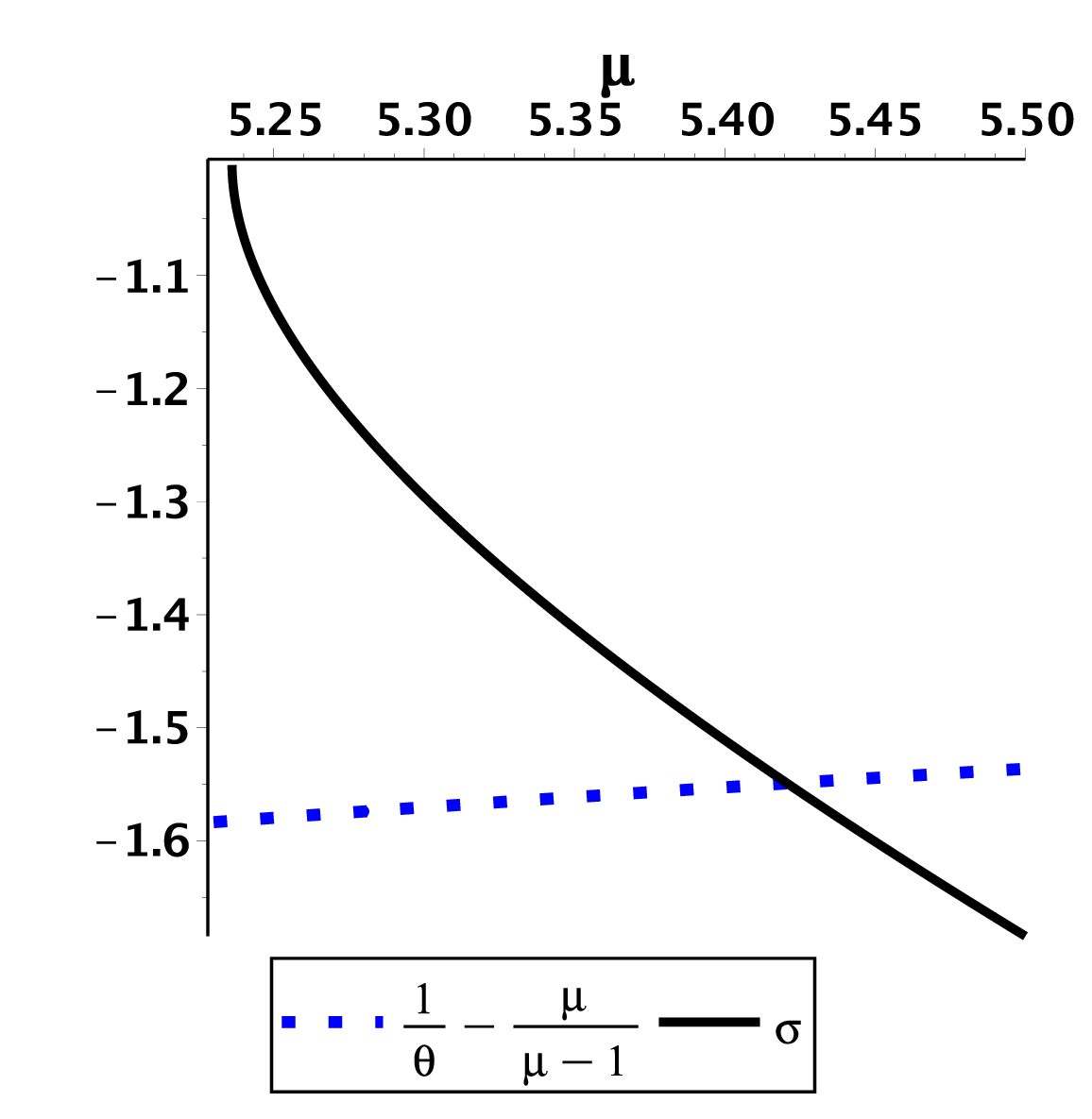}\hspace{1cm}
  \includegraphics[width=4.5cm]{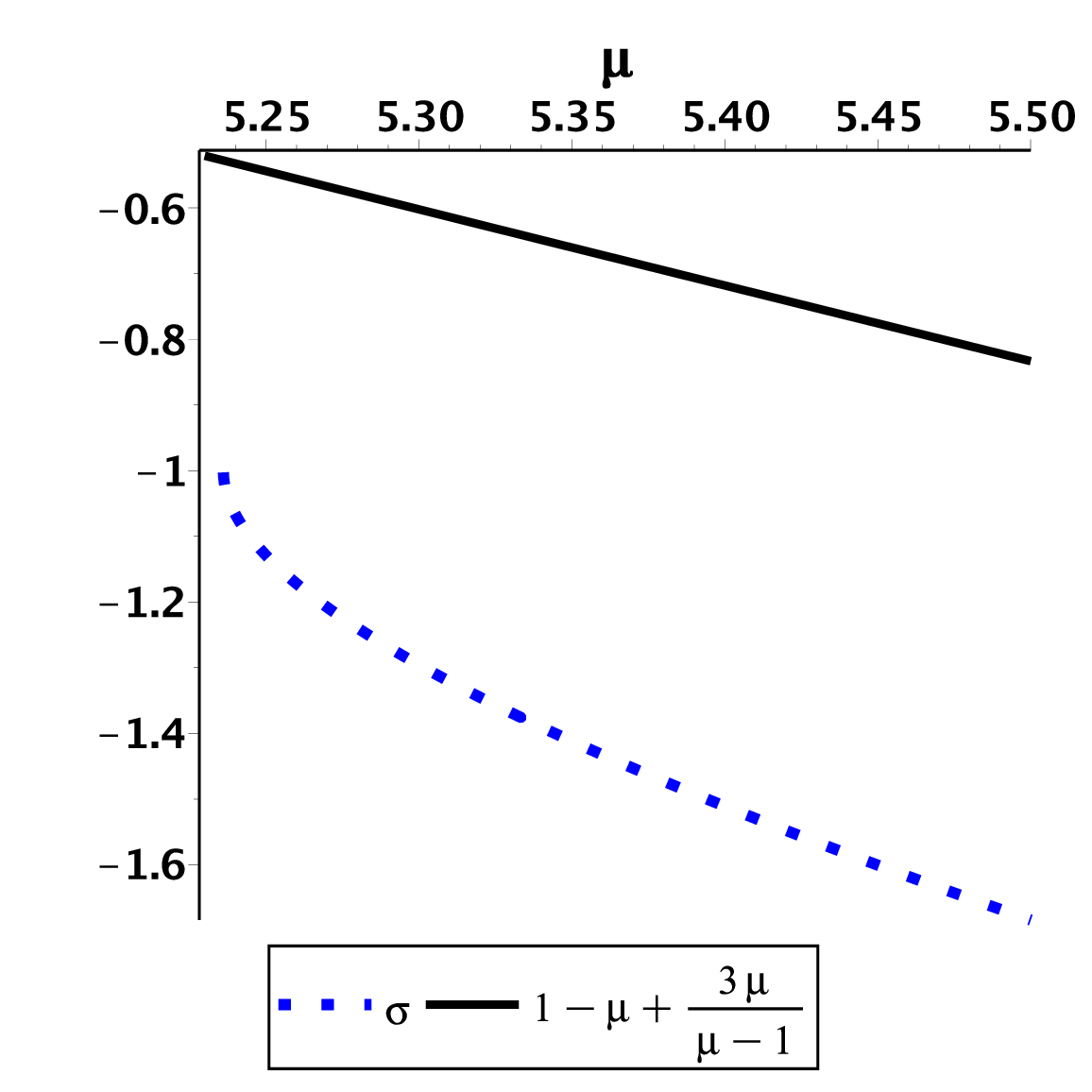}\\
  \includegraphics[width=4.5cm]{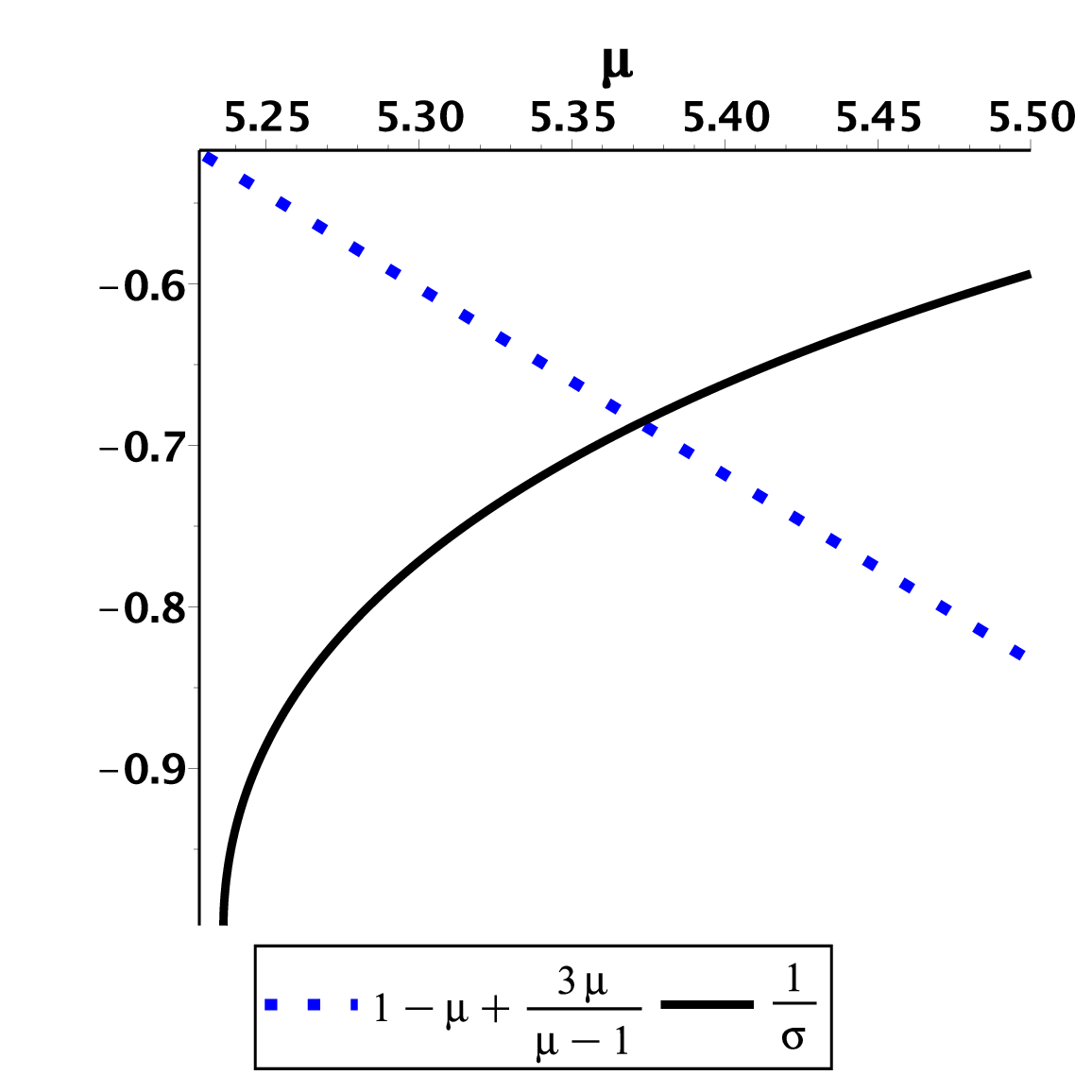}\hspace{1cm}
  \includegraphics[width=4.5cm]{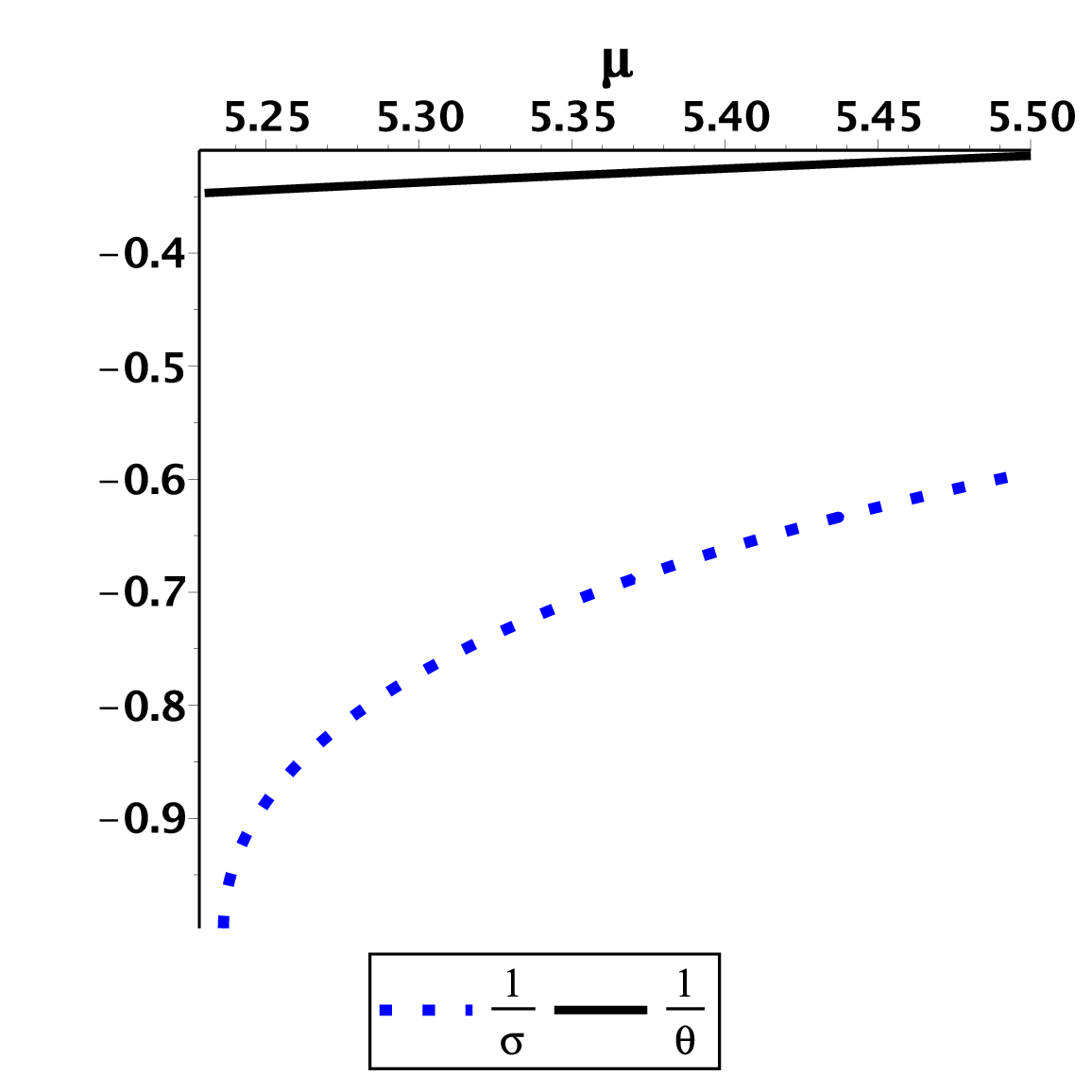}
  \caption{Inequality $\theta^{-1}-\frac {\mu}{\mu-1} <\sigma <1-\mu+3\frac {\mu}{\mu-1} <\sigma^{-1}<\theta^{-1}$.}\label{fig:ineq-example-01020304}
\end{figure}

We know that $S_{2}= 2-\mu - \frac{1}{S_{1}} + r_{2} \frac{\mu }{\mu  -1}$ and we claim that $r_{2}=1$ because $2-\mu - \frac{1}{S_{1}} +  \frac{\mu }{\mu  -1} = \tilde{\psi}(S_{1}) \in (\sigma , \sigma^{-1}]\subset (\theta^{-1}-\frac {\mu}{\mu-1} , \theta^{-1}]$ meaning that $r_{2}=1$.
In each step, $S_{j+1}=\tilde{\psi}(S_{j})$ obeys the recurrence generated by $\tilde{\psi}(t):= (2- \mu + \frac{\mu }{\mu  -1}) -\frac{1}{t}, \; t \neq 0$. From \cite{OliveTrevAppDiff} we know that, since $\Delta=(2- \mu + \frac{\mu }{\mu  -1})^2-4>0$, we have
$$S_{j}:=\sigma   +\frac{\sigma^{-1} -\sigma}{\beta\sigma^{2j -2}+1},\; 1 \leq j \leq k-1,$$
where $\beta:=\frac{\sigma^{-1} -\sigma}{S_{1} -\sigma} -1$ and $\sigma=\frac{(2- \mu+ \frac{\mu}{\mu -1})- \sqrt{(2- \mu+ \frac{\mu}{\mu -1})^2-4}}{2}$. We notice that $\psi(S_{j}) \in (\sigma , \sigma^{-1}]\subset (\theta^{-1}-\frac {\mu}{\mu-1} , \theta^{-1}]$,  meaning that $r_{j}=1$ for $j \leq k-1$. We could extrapolate for $\psi(S_{k-1}) \in (\sigma , \sigma^{-1}]\subset (\theta^{-1}-\frac {\mu}{\mu-1} , \theta^{-1}]$ obtaining $r_{k}=1+1$.\\ Thus $G_{k}(\mu):=[[1,1,1],[1],...,[1],[1,1]], \; k \geq 3$, as claimed.

\end{proof}

\begin{remark}
  We are not claiming that the points $\mu \in I$  are not Laplacian limit points, only that  they can not be achieved by using caterpillar graphs according to the Shearer's process. This motivates us to consider a more general class of trees in the next section.
\end{remark}

\section{Sequences of linear trees}\label{sec:lintreeseq}
Inspired by the Shearer's procedure from~\cite{shearer1989distribution}, we will consider increasing sequences of Laplacian spectral radii of graphs $\{G_k \; | \; k \in \mathbb{N}\}$, where $G_k$ are linear trees, as introduced in Section~\ref{sec:gennotation}.

We need some technical results. The first one is:
\begin{lemma}\cite{Anderson-1985, zhang2011laplacian}\label{lem:bound}
Let $G=(V,E)$ be a graph on $n$ vertices and $m$ edges.  Let $V=\{v_1,v_2,\ldots,v_n\}$ and let $d(v_i)$ be the degree of the vertex $v_i$. Then
$$\rho_{L}(G) \leq \max \left\{d(v_i)+d(v_j) ~|~ v_i, v_j \in E \right\}.$$
In particular, we can say that $\rho_{L}(G) \le 2 \Delta(G)$,  where $\Delta(G)$ is the largest degree of $G$.
\end{lemma}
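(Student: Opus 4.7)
The plan is to use the incidence matrix factorization of the Laplacian, which is the classical route due to Anderson and Morley. Let $B \in \mathbb{R}^{n \times m}$ denote the signed vertex-edge incidence matrix: fix an arbitrary orientation of $G$, and for each directed edge $e$ from $v_i$ to $v_j$, set $B_{ie}=1$, $B_{je}=-1$, with all other entries zero. A direct calculation gives $L(G)=BB^T$, independent of the chosen orientation. Since the nonzero eigenvalues of $BB^T$ and $B^TB$ coincide, we have $\rho_L(G)=\rho(B^TB)$.

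Next I would analyze the matrix $C:=B^T B$, which is $m\times m$ indexed by the edges of $G$. Its diagonal entries are $C_{ee}=2$, and for distinct edges $e,f$ the entry $C_{ef}$ equals $0$ if $e$ and $f$ are vertex-disjoint, and $\pm 1$ otherwise (the sign depending on the chosen orientations). Let $|C|$ be the entrywise absolute value of $C$; this is a nonnegative matrix with $|C|_{ef}\ge |C_{ef}|$. A standard Perron--Frobenius comparison then gives
$$
\rho(C)\;\le\;\rho(|C|).
$$

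Finally I would bound $\rho(|C|)$ by its maximum row sum, which is the standard Gershgorin/Perron bound for nonnegative matrices. In the row of $|C|$ indexed by the edge $e=v_iv_j$, the diagonal contributes $2$, and the off-diagonal ones appear exactly at the $d(v_i)-1$ other edges incident to $v_i$ and the $d(v_j)-1$ other edges incident to $v_j$. Thus the row sum equals
$$
2+(d(v_i)-1)+(d(v_j)-1)\;=\;d(v_i)+d(v_j),
$$
so $\rho(|C|)\le \max\{d(v_i)+d(v_j):v_iv_j\in E\}$, and chaining the inequalities yields the main claim. The stated particular case $\rho_L(G)\le 2\Delta(G)$ is immediate since $d(v_i)+d(v_j)\le 2\Delta(G)$ for every edge.

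There is no real obstacle here: this is the textbook Anderson--Morley bound and the argument is linear-algebraic and self-contained once one recognizes the factorization $L=BB^T$ and switches to the edge-indexed matrix $B^TB$. If one preferred to avoid the incidence matrix entirely, an alternative would be to take a Laplacian eigenvector $x$ at eigenvalue $\rho_L(G)$, pick an edge $v_iv_j$ maximizing $|x_i|+|x_j|$, and apply the eigenvalue equation at the two coordinates simultaneously to extract $d(v_i)+d(v_j)$; this gives the refined bound without the detour through the line-graph-like matrix, at the cost of a slightly more delicate coordinate-chasing argument.
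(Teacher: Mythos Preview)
Your argument is correct; it is exactly the classical Anderson--Morley proof via the factorization $L=BB^{T}$, the switch to the edge-indexed matrix $B^{T}B$, and the row-sum bound on its entrywise absolute value. The paper itself does not prove this lemma at all: it is stated with citations to \cite{Anderson-1985, zhang2011laplacian} and used as a black box, so there is nothing to compare against beyond noting that you have reproduced the standard proof from the cited source.
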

The second one is a bound on the spectral radii of $\{G_k \; | \; k \in \mathbb{N}\}$. We recall that $\omega(T_j)$ is the number of paths that compose the starlike $T_j$.
\begin{lemma}\label{lem:boundness seq starlikes}
Consider a sequence of linear trees $\{G_{k} \; | \; k \in \mathbb{N}\}$. Then,
$$
\exists M>0, \; \rho_{L}(G_{k}) \leq M, \forall  k\in\mathbb{N} \Leftrightarrow \max_{k} \max_{1 \leq j \leq k}  \omega(T_{j}) <+\infty.$$
\end{lemma}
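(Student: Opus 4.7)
My plan is to reduce both directions of the equivalence to the relationship between $\rho_L(G_k)$ and the maximum degree $\Delta(G_k)$, exploiting the special structure of linear trees: in $G_k = [T_1, \ldots, T_k]$, every vertex outside the main path lies on some attached path $P_{q_i^j}$ and hence has degree at most $2$, while a main path vertex $v_j$ has degree $\omega(T_j) + 2$ if it is internal and $\omega(T_j) + 1$ if it is an endpoint. Consequently,
$$\max_{1 \leq j \leq k} \omega(T_j) + 1 \;\leq\; \Delta(G_k) \;\leq\; \max_{1 \leq j \leq k} \omega(T_j) + 2.$$
So the whole statement is really about $\Delta(G_k)$ being uniformly bounded, and I only need to sandwich $\rho_L(G_k)$ between two affine functions of $\Delta(G_k)$.

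For the $(\Leftarrow)$ direction, assume $N := \sup_{k} \sup_{1 \leq j \leq k} \omega(T_j) < \infty$. Then the degree bound above gives $\Delta(G_k) \leq N+2$ for all $k$, and a direct application of Lemma~\ref{lem:bound} yields $\rho_L(G_k) \leq 2\Delta(G_k) \leq 2(N+2)$, which is the required uniform bound $M$.

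For the $(\Rightarrow)$ direction I will use the well-known lower bound $\rho_L(G) \geq \Delta(G) + 1$, valid for any graph with at least one edge. I will justify it in one sentence: if $v$ is a vertex of maximum degree in $G$, then the spanning edge-subgraph $H$ of $G$ consisting of the edges incident to $v$ is a disjoint union of $K_{1,\Delta(G)}$ and isolated vertices, so $L(G) - L(H)$ is positive semi-definite, and Weyl's inequality gives $\rho_L(G) \geq \rho_L(H) = \Delta(G) + 1$. Combined with the degree bound above, this produces $\rho_L(G_k) \geq \omega(T_j) + 2$ for every $j$. Hence a uniform bound $\rho_L(G_k) \leq M$ forces $\omega(T_j) \leq M - 2$ for all $j$ and all $k$, which is exactly $\max_k \max_{1 \leq j \leq k} \omega(T_j) < \infty$. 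The main obstacle, if any, is simply that the paper has only cited the upper bound $\rho_L(G) \leq 2\Delta(G)$ so far, so one must supply (or cite) the matching lower bound $\rho_L(G) \geq \Delta(G) + 1$; modulo this, the proof is purely structural and requires no eigenvalue computation.
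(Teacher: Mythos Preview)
Your proof is correct and follows essentially the same approach as the paper: both directions hinge on sandwiching $\rho_L(G_k)$ between $\Delta(G_k)+1$ and $2\Delta(G_k)$, with $\Delta(G_k)$ controlled by $\max_j \omega(T_j)$. Your $(\Leftarrow)$ argument is in fact slightly cleaner than the paper's, which detours through an auxiliary dominating tree $F_k$ before applying $\rho_L \leq 2\Delta$; you apply that bound directly to $G_k$, which is all that is needed.
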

\begin{proof}
Assume that $\exists M>0, \; \rho_{L}(G_{k}) \leq M, \forall k\in\mathbb{N}$, then necessarily ${\displaystyle \max_{k} \max_{1 \leq j \leq k}  \omega(T_{j}) <+\infty}$ otherwise if there exists a sequence of $T_j$ with  $\omega(T_{j}) \to \infty$ then, since $\rho_{L}(G_{k}) \geq 1 + \omega(T_{j})$, would be a contradiction.

Reciprocally, let  ${\displaystyle r_0:=\max_{k} \max_{1 \leq j \leq k}  \omega(T_{j}) <+\infty}$ and ${\displaystyle h_0:=\max_{1 \leq j \leq k}  h(T_{j})}$ then for any $k$, $G_{k}$ is a subgraph of
$F_{k}=[[h_0,\ldots,h_0], \ldots,[h_0,\ldots,h_0]],$
where $w([h_0,\ldots,h_0])=r_{0}$ and $\ell(F_{k})=k$. Thus, $\rho_{L}(G_{k}) \leq \rho_{L}(F_{k})$, for any $k$.

A well known upper bound for the $\rho_{L}(F_k)$ is $\rho_{L}(F_k) \le 2\Delta(F_k)$ (see Lemma~\ref{lem:bound}),  where $\Delta(F_k)$ is the largest degree of $F_k$. As the $\Delta(F_k) \leq r_0 + 2$, we have  $\rho_{L}(G_{k}) \leq \rho_{L}(F_{k}) \leq 2( r_0 + 2)$ and the result follows.

\end{proof}

\begin{definition}\label{def:sequence_starlike_limited}
   We denote by $\mathcal{L}_{\infty}$ the set of all sequences  of starlike trees with bounded width, that is,
   $$\mathcal{L}_{\infty}:=\left\{ (T_{1},T_{2},...)\; | \; T_j's \text{ are starlike trees and } \max_{k \geq 1} \max_{1 \leq j \leq k}  \omega(T_{j}) <+\infty\right\}.$$
\end{definition}

\begin{definition}\label{def:generalized shearer sequence}
   For each $\mathbb{T}=(T_{1},T_{2},...) \in \mathcal{L}_{\infty}$ and $C=(C_{2},C_{3},...) \in \mathcal{L}_{\infty}$ we can define a sequence of linear trees
   \[
   \left\{
     \begin{array}{ll}
       G_{1}(\mathbb{T}):=[T_{1}] \\
       G_{k}(\mathbb{T}):=[T_{1},T_{2},..., T_{k-1}, C_{k}], \; k \geq 2,
     \end{array}
   \right.
   \]
called (Laplacian) \emph{generalized Shearer sequence} associated to $\mathbb{T}$ and $C$ if $\rho_{L}(G_{k}(\mathbb{T})) < \rho_{L}(G_{k+1}(\mathbb{T}))$ for all $k \geq 1$.
\end{definition}

We actually should use the notation $G_{j}(\mathbb{T}, C)$, but we will omit the dependence in $C$ when it is clear.  A sufficient condition to have  $\rho_{L}(G_{k}(\mathbb{T})) < \rho_{L}(G_{k+1}(\mathbb{T}))$ is  $G_{k}(\mathbb{T})$  to be a subgraph of $G_{k+1}(\mathbb{T})$  for all $k \geq 1$, called subgraph condition.

\begin{remark}\label{rem:canonical sequence}
   The canonical case we may deal is, $\mathbb{T}=(T_1, T_2,...) \in \mathcal{L}_{\infty}$ and  $C=(T_2,T_3,...)$ a shift of $\mathbb{T}$ (thus $C \in \mathcal{L}_{\infty}$), then $G_1=[T_1]$, $G_2=[T_1, T_2]$ and $G_3=[T_1, T_2, T_3]$, and so on. This sequence naturally satisfies the subgraph condition so it is a generalized Shearer sequence.
\end{remark}

\begin{example}\label{ex:gen shearer seq}
Let us consider some cases for generalized Shearer sequences:\\
  \begin{itemize}
    \item[a)] Consider  $\mathbb{T}=([1,1,1],[1],[1],[1],[1],...) \in \mathcal{L}_{\infty}$ and\\ $C=([1,1],[1,1],[1,1],[1,1],...) \in \mathcal{L}_{\infty}$, then\\
        $G_1=[[1,1,1]]$, $G_2=[[1,1,1],[1,1]]$, $G_3=[[1,1,1],[1],[1,1]]$,\\ $G_4=[[1,1,1],[1],[1],[1,1]]$, and so on, is obviously a generalized Shearer sequence coincident with the one in Section~\ref{sec:bad}.\\
    \item[b)] Consider  $\mathbb{T}=([1,1],[1],[0],[1],[0],...) \in \mathcal{L}_{\infty}$ and $C=([0],[0],[0],[0],...) \in \mathcal{L}_{\infty}$ then $G_1=[[1,1]]$, $G_2=[[1,1],[0]]$, $G_3=[[1,1],[1],[0]]$,  $G_4=[[1,1],[1],[0],[0]]$,\\ $G_5=[1,1],[1],[0],[1],[0]$, and so on, is  a generalized Shearer sequence.\\
    \item[c)]  Consider  $\mathbb{T}=([1,1],[1,1],[1],[1],[1],...) \in \mathcal{L}_{\infty}$ and\\ $C=([1,1,1,1],[0],[0],[0],...) \in \mathcal{L}_{\infty}$ then $G_1=[[1,1]]$, $G_2=[[1,1],[1,1,1,1]]$ and $G_3=[[1,1],[1,1],[0]]$. Note that $G_2$  is not a subgraph of $G_3$, moreover,
        $$\rho_{L}(G_{2})=6.141336+ >  5.261802+=\rho_{L}(G_{3})$$
          thus it is not a generalized Shearer sequence.
    \item[d)]  Consider  $\mathbb{T}=([1],[1,1],[1,1,1],[1,1,1,1],...) $ and\\ $C=([1],[1],[1],[1],...) \in \mathcal{L}_{\infty}$ then $G_1=[[1]]$, $G_2=[[1],[1]]$ and $G_3=[[1],[1,1],[1]]$, $G_4=[[1],[1,1],[1,1,1], [1]]$, and so on. This sequence satisfies the subgraph condition however it is not a generalized Shearer sequence because $\omega(T_j)=j-1 \to \infty$, that is, $C\in \mathcal{L}_{\infty}$ but $\mathbb{T} \not\in \mathcal{L}_{\infty}$.
  \end{itemize}
\end{example}

By construction, each generalized Shearer sequence satisfies the property that $\rho_{L}(G_{k}) \leq \rho_{L}(G_{k+1}) $ and by Lemma~\ref{lem:boundness seq starlikes} we also know that $k \to \rho_{L}(G_{k})$ is bounded. Thus, there exists $\displaystyle\lim_{k \to \infty} \rho_{L}(G_{k}):= \gamma(\mathbb{T})$.

\begin{definition}\label{def:generalized shearer sequence limits}
   We denote by $\mathcal{S}$ the set of all limit points of (Laplacian) generalized Shearer sequences, that is,
   $$ \mathcal{S}:=\{ \gamma \; | \; \gamma=\gamma(\mathbb{T}) \text{ for some } \mathbb{T}, C \in \mathcal{L}_{\infty}\}.$$
\end{definition}

\section{Applications: Generalized random Shearer process}\label{sec:appProcess}
As the set $\Omega_1 \subseteq \mathcal{S}\subseteq [4.38+, \infty)$ (see Section~\ref{sec:revisit shearer Laplacian}), it is natural to believe that $\mathcal{S}$ is  a larger set of Laplacian limit points. Indeed, we will show this. Neverthless, it will be useful to have a structured way to construct linear trees as generalized Shearer sequences, for a given parameter $\mu$. The main goal of this section is provide one, which is an adaptation of the classical process used by Shearer to produce caterpillars. We denominate it as  \textbf{Generalized random Shearer process}.

Fixed $\mu > 4.38+$, we will construct a generalized Shearer sequence in the following way. Let $\mathbb{T}=(T_{1},T_{2},\ldots) $ and $C=(C_{2},C_{3},\ldots) $ be arbitrary sequences of starlike trees. From that we define linear trees
$$G_{k}:=[T_{1},T_{2},, T_{k-1}, C_{k}], \; k \geq 2,$$
with $G_{1}:=[T_{1}]$. We would like to have $\rho_{L}(G_{k}) < \mu$ and $G_{k}$ a subgraph of $G_{k+1}$.

To achieve this property, we use ${\rm Diagonalize }(G_{k}, -\mu)$ obtaining $\Pi(G_{k}, \mu)=(S_{1}, \ldots, S_{k})$ (see Section~\ref{sec:gennotation}).  We recall that $\psi(t)=2-\mu -\frac{1}{t}, \; t\neq 0$, $\theta:= \frac{-(\mu -2) -\sqrt{(\mu -2)^2 - 4}}{2}$ is an attracting point and $\theta':= \frac{-(\mu -2) +\sqrt{(\mu -2)^2 - 4}}{2}= \theta^{-1}>\theta$ is a repelling one.

Thus, in order to obtain $S_{j}<0$ for all $j$  we successively choose the following starlike trees:
\[
\left\{
  \begin{array}{ll}
    T_1 \text{ such that }  1  -\mu +  \delta(T_{1},\mu) < \theta^{-1}; \\
    T_j \text{ such that } \psi(S_{j-1}) + \delta(T_{j},\mu) < \theta^{-1},\; 2 \leq j \leq k; \\
    C_k \text{ such that } C_k:=T_k.
  \end{array}
\right.
\]
Since  $\psi(S_{k-1}) + \delta(T_{k},\mu) < \theta^{-1}$, $\theta^{-1}<0<1$ and  $C_k=T_k$ we obtain
$$\psi(S_{k-1}) + \delta(T_{k},\mu) < 1$$
$$-1+\psi(S_{k-1}) +  \delta(C_{k},\mu) < 0$$
$$S_{k} <0.$$
Thus $\rho_{L}(G_{k}) < \mu$ because $\sign(\Pi)(G_{k}, \mu)=(-,\ldots, -)$.

This procedure is random since we can choose different trees in each step and it will always work because $ 1  -\mu  < \theta^{-1}$ so, in the worst case, we can choose $T_1=[0]$ and the same is true for the next ones since the previous $S_{j-1} < \theta^{-1}$ implies that  $\psi(S_{j-1}) < \theta^{-1}$, by construction.

We notice that necessarily $\mathbb{T}, C \in \mathcal{L}_{\infty}$ because for each choice of $T_j$ or $C_k$ we must have subgraphs of $G_k$ which has spectral radius smaller than $\mu$, a fixed number, this means that the width $\omega(T_j)$ or $\omega(C_k)$ can not exceed $\mu - 4$ (recall that $\rho_{L}(G_{k}) \geq \max\{\omega(T_j)+3,\omega(C_k)+2\})$.

From the way we built the $G_{k}$'s, we always end up with a generalized Shearer sequence since $C$ is a shift of $\mathbb{T}$. We then have $\rho_{L}(G_{k}) < \rho_{L}(G_{k+1})$ and, in particular, there exists
$$\lim_{k \to \infty} \rho_{L}(G_k)=\gamma(\mu) \leq \mu.$$

\begin{definition}\label{def:random laplacian shearer limit points}
   We denote by $\Omega \subseteq [4.38+, \infty)$ the set of all Laplacian limit points produced by the Generalized random Shearer's method
   $$\Omega:=\{ \gamma \; | \; \lim_{k \to \infty} \rho_{L}(G_k(\mu))=\gamma\}.$$
\end{definition}

\begin{remark}
  We notice that, if in addition to the above requirements, we restrict our choices to $\max\{h(T_j),h(C_k)\} =1$ and we maximize the value of each $S_j < \theta^{-1}$ we recover the caterpillars from the classic Shearer's paper. Thus $\Omega_{1} \subset \Omega$.
\end{remark}

\begin{example}\label{ex: random gen shearer 5.4}
   Consider $\mu=5.4$ and a random Shearer process running up computationally producing a generalized Shearer sequence
   $$
   \mathbb{T}:=([1], [1, 2], [1], [0], [0], [1], [1], [0], [1], [2], [2], [2], [1],[1], [2], [2], [2], [1], [2], [1],[2],
   $$
   $$
   [1], [2], [2], [1], [2], [1], [2], [2], [2], [2], [2], [2], [2], [1], [1], [2], [1], [2], [2], [1],[2], [2], [2],[1],
   $$
   $$
   [2], [2], [2], [2], [2], [1], [2], [2], [2], [2], [2], [2], [1], [2], [2],[1], [1], [2], [2], [2], [2], [2],[2], [2],
   $$
   $$
   [1], [2], [2], [1], [2],[1], [1], [2], [2], [2], [1], [1], [1], [1], [1], [1], [2], [2], [1], [1],[2], [2],[1], [2],
   $$
   $$
   [2],[1], [2], [1], [2], [2], [2], [0]^{\infty}).
   $$

By a direct computation we obtain \\
   $G_{1}:=[[1]]$ and $\rho_{L}(G_{1})=2$;\\
   $G_{2}:=[[1], [1, 2]]$ and $\rho_{L}(G_{2})=4.302775+$;\\
   $G_{3}:=[[1], [1, 2], [1]]$ and $\rho_{L}(G_{3})=5.236067+$;\\
   $G_{4}:=[[1], [1, 2], [1], [0]]$ and $\rho_{L}(G_{4})=5.3817984+$;\\
   $G_{5}:=[[1], [1, 2], [1], [0],[0]]$ and $\rho_{L}(G_{4})=5.397488988+$\\
and so on. For instance $\rho_{L}(G_{100})=5.39999504+$, proving that by continuing the process we obtain some Laplacian limit point in the interval $(\mu_{*}, \mu^{*})$. This proves that the generalized random Shearer process generates Laplacian limit points which are not achieved by classical approach using caterpillars, that is, $\Omega_{1} \subsetneq \Omega$.

It is worth mentioning that, for the same $\mu=5.4$, we could have made  different choices in each step, obtaining a very different sequence. We will compute the first terms using the maximum criteria (as the classical Shearer's method) and leave to the reader to check that these are optimal choices once we assume $h(T_j)\leq 2$. This, however, result that is worse in terms of approximation to $\mu=5.4$:\\
   Take $T_1:=[2,2,2]$ because $S_{1}=1-\mu-\delta([2,2,2],\mu) =-0.8843+ < -0.3252+=\theta^{-1}$;\\
   Take $T_2:=[0]$ because $S_{2}=\psi(S_{1})-\delta([0],\mu) =-1.1994+ < -0.3252+=\theta^{-1}$;\\
   Take $T_3:=[2]$ because $S_{3}=\psi(S_{2})-\delta([2],\mu) =-1.2511+ < -0.3252+=\theta^{-1}$;\\
   Take $T_4:=[2]$ because $S_{4}=\psi(S_{3})-\delta([2],\mu) =-1.2855+ < -0.3252+=\theta^{-1}$ and so on.\\
   Thus we have the sequence $\mathbb{T}:=([2,2,2],[0], [2], [2], [2]^{\infty}).$
   By a direct computation we obtain \\
   $G_{1}:=[[2,2,2]]$ and $\rho_{L}(G_{1})=4.4142+$\\
   $G_{2}:=[[2,2,2],[0]]$ and $\rho_{L}(G_{2})=5.2360+$\\
   $G_{3}:=[[2,2,2],[0], [2]]$ and $\rho_{L}(G_{3})=5.3105+$\\
   $G_{4}:=[[2,2,2],[0], [2], [2]]$ and $\rho_{L}(G_{4})=5.3325+$\\
   $G_{5}:=[[2,2,2],[0], [2], [2],[2]]$ and $\rho_{L}(G_{4})=5.3423+$.\\
   Finally, notice that $\rho_{L}(G_{100})=5.3554+ < \mu_{*}$ indicating that the limit does not reach the interval $(\mu_{*}, \mu^{*})$ as the previous one!
\end{example}

In some sense, a generalized Shearer sequence $G_{k}(\mathbb{T},C)$ is always the realization of a generalized random Shearer process, for a given $\mu$.
\begin{lemma}\label{lem:converse gen random}
   Consider the generalized Shearer sequence $G_{k}(\mathbb{T},C)$
   \[
   \left\{
     \begin{array}{ll}
       G_{1}(\mathbb{T}):=[T_{1}] \\
       G_{k}(\mathbb{T}):=[T_{1},T_{2},\ldots, T_{k-1}, C_{k}], \; k \geq 2,
     \end{array}
   \right.
   \]
   where  $\mathbb{T}=(T_{1},T_{2},\ldots) \in \mathcal{L}_{\infty}$ and $C=(C_{2},C_{3},\ldots) \in \mathcal{L}_{\infty}$. Let $\mu:=\displaystyle\lim_{k \to \infty} \rho_{L}(G_{k})$ and $\theta:= \frac{-(\mu -2) -\sqrt{(\mu -2)^2 - 4}}{2}$, then
   \[
\left\{
  \begin{array}{ll}
    S_{1}=1  -\mu +  \delta(T_{1},\mu) < \theta^{-1}; \\
    S_{j}=\psi(S_{j-1}) + \delta(T_{j},\mu) < \theta^{-1},\; 2 \leq j \leq k,
  \end{array}
\right.
\]
where $\Pi(G_{k}, \mu)=(S_{1},\ldots, S_{k})$ is obtained from ${\rm Diagonalize}(G_{k}, -\mu)$.
In particular, if $C$ is a shift of $\mathbb{T}$ then we have a generalized random Shearer process.
\end{lemma}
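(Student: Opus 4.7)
The plan is to deduce $S_j<\theta^{-1}$ from the hypothesis $\mu=\lim_{k\to\infty}\rho_L(G_k)$ by combining Lemma~\ref{lem:detmu} with a dynamical-systems analysis of $\psi$ near its repelling fixed point $\theta^{-1}$.

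First, I translate the limit hypothesis into sign information. Since $(G_k)$ is a generalized Shearer sequence, $\rho_L(G_k)<\rho_L(G_{k+1})$, so $\rho_L(G_k)<\mu$ for every $k$. Lemma~\ref{lem:detmu}(a) then gives $\sign(\Pi)(G_k,\mu)=(-,\ldots,-)$, i.e.\ every diagonal entry produced by ${\rm Diagonalize}(L(G_k),-\mu)$ is strictly negative. Because the algorithm processes the main path from $v_1$ upward, and the endpoint correction (the $-1$ in Equation~\eqref{eq:Laplacian psi}) affects only the final entry, for any $j$ and any $m>j$ the value at position $j$ in $G_m$ equals $\psi(S_{j-1})+\delta(T_j,\mu)$ and is independent of $m$. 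Letting $m\to\infty$, the infinite recurrence stated in the lemma is well defined and satisfies $S_j<0$ for every $j$.

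The heart of the argument is sharpening $S_j<0$ to $S_j<\theta^{-1}$. Assume for contradiction that $S_{j_0}\in[\theta^{-1},0)$ for some $j_0$. Writing $\psi(t)-t=-(t^2-(2-\mu)t+1)/t$ and using that the quadratic has roots $\theta,\theta^{-1}$, one checks that $\psi$ is strictly increasing on $(-\infty,0)$ and $\psi(t)>t$ for $t\in(\theta^{-1},0)$, while $\psi(\theta^{-1})=\theta^{-1}$. Combined with $\delta(T_j,\mu)\geq 0$, this gives $S_{j+1}=\psi(S_j)+\delta(T_{j+1},\mu)\geq\psi(S_j)\geq\theta^{-1}$, with strict inequality as soon as $S_j>\theta^{-1}$. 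Thus $(S_j)_{j\geq j_0}$ is non-decreasing, eventually strictly increasing, and bounded above by $0$ (otherwise some $S_m\geq 0$ would contradict $\rho_L(G_{m+1})<\mu$). Let $L=\lim_j S_j\in[\theta^{-1},0]$. If $L=0$, then $\psi(S_j)=2-\mu-1/S_j\to+\infty$, forcing $S_{j+1}\to+\infty$, a contradiction. If $L\in(\theta^{-1},0)$, passing to the limit in the recurrence yields $\delta(T_{j+1},\mu)=S_{j+1}-\psi(S_j)\to L-\psi(L)<0$, contradicting $\delta\geq 0$. Both cases are impossible, so no such $j_0$ exists and $S_j<\theta^{-1}$ for every $j$.

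The concluding sentence of the lemma is then almost a tautology: when $C$ is the shift of $\mathbb{T}$, so that $C_k=T_k$ for all $k$, the inequalities $S_1<\theta^{-1}$ and $\psi(S_{j-1})+\delta(T_j,\mu)<\theta^{-1}$ just verified are precisely the admissibility conditions that define a realization of the generalized random Shearer process of Section~\ref{sec:appProcess} for the parameter $\mu$. The most delicate point I anticipate is the edge case where some $S_{j_0}$ equals $\theta^{-1}$ with the tail of $\mathbb{T}$ consisting entirely of empty starlikes $[0]$: in that configuration $S_j$ can stagnate at $\theta^{-1}$ without the dynamical argument producing a contradiction, and strictness must then be extracted from the strict monotonicity $\rho_L(G_k)<\rho_L(G_{k+1})$ together with a direct calculation of the limit spectral radius of a path-extended linear tree.
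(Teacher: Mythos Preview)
Your proof follows the same overall strategy as the paper's: obtain $S_j<0$ for all $j$ from $\rho_L(G_k)<\mu$ via Lemma~\ref{lem:detmu}, then argue by contradiction that no $S_{j_0}$ can lie in $[\theta^{-1},0)$. The only difference is in how the contradiction is extracted. The paper observes that $\psi'(t)=1/t^2>\theta^2>1$ on $(\theta^{-1},0)$, so $\psi$ is uniformly expanding there and the lower bound $S_{j_0+m}\ge\psi^m(S_{j_0})$ forces some iterate to become positive. You instead note that $(S_j)_{j\ge j_0}$ is monotone and bounded, pass to the limit $L$, and rule out $L=0$ and $L\in(\theta^{-1},0)$ separately. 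Both arguments are correct and of comparable length.

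You are actually more careful than the paper on one point: the paper's proof assumes only $S_{j_0}>\theta^{-1}$, so taken literally it proves $S_j\le\theta^{-1}$ rather than the strict inequality claimed in the statement. You explicitly isolate the boundary case $S_{j_0}=\theta^{-1}$ with an all-$[0]$ tail and indicate that it requires the strict monotonicity of $\rho_L(G_k)$ to dispose of; this is the right diagnosis, and the paper simply does not address it.
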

\begin{proof}
We know that  $\rho_{L}(G_{k})$ is strictly increasing, thus  $\rho_{L}(G_{k}) < \mu$. Applying ${\rm Diagonalize} (G_{k}, -\mu)$ we should obtain only negative values $S_{j}$. Let us suppose, by contradiction, that $S_{j_{0}}>\theta^{-1}$, for some $j_{0} \geq 1$, that is, $S_{j_{0}}\in (\theta^{-1}, 0)$, then
   $$S_{j_{0}+1}=\psi(S_{j_{0}}) + \delta(T_{j_{0}+1},\mu) \geq \psi(S_{j_{0}}) >  S_{j_{0}}.$$
   If $S_{j_{0}+1}>0$ we have a contradiction, otherwise $S_{j_{0}+1}\in (\theta^{-1}, 0)$ and
   $$S_{j_{0}+2}=\psi(S_{j_{0}+1}) + \delta(T_{j_{0}+2},\mu) \geq \psi^{2}(S_{j_{0}}).$$
   In this way, after $m$ steps not reaching a contradiction we obtain
   $$S_{j_{0}+m} \geq \psi^{m}(S_{j_{0}}).$$
   As $\psi$ is uniformly expanding in the interval $(\theta^{-1}, 0)$ (actually $\psi'(t)> \psi'(\theta^{-1})=\frac{1}{\left(\theta^{-1}\right)^2}=\theta^2 >1$) we must have $S_{j_{0}+m} > 0$ for a large enough iterate $m$ (see Figure~\ref{fig:recurrence-psi}), a contradiction with $S_{j}<0, j \geq 1$.
   \begin{figure}[H]
  \centering
  \includegraphics[width=12cm]{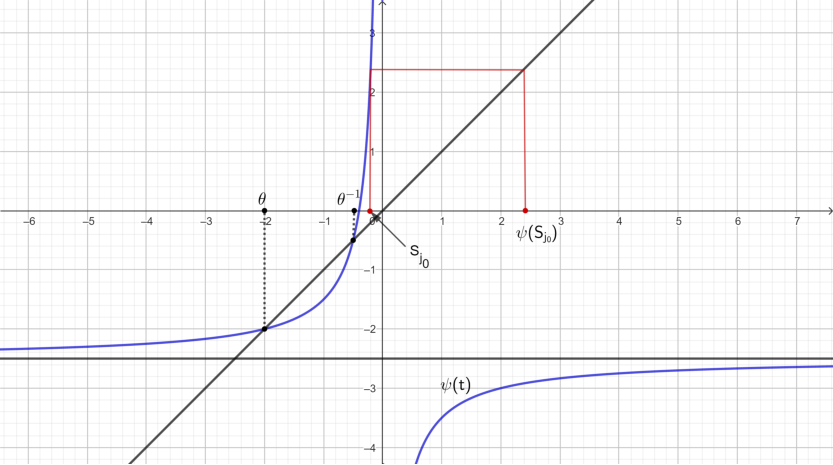}
  \caption{Representation of $\psi$. In this case $S_{j_{0}+m} > 0$ for $m=1$.}\label{fig:recurrence-psi}
\end{figure}
\end{proof}

\section{Density of Laplacian limit points}\label{sec:density laplac lim points}
The main goal of this section is to investigate, for a generalized Shearer sequence $G_{k}(\mathbb{T},C)$
   \[
   \left\{
     \begin{array}{ll}
       G_{1}(\mathbb{T}):=[T_{1}] \\
       G_{k}(\mathbb{T}):=[T_{1},T_{2},..., T_{k-1}, C_{k}], \; k \geq 2,
     \end{array}
   \right.
   \]
where  $\mathbb{T}=(T_{1},T_{2},...) \in \mathcal{L}_{\infty}$ and $C=(C_{2},C_{3},...) \in \mathcal{L}_{\infty}$, the Laplacian limit points $\mu:=\displaystyle\lim_{k \to \infty} \rho_{L}(G_{k})$, which we denoted by $\mathcal{S}$ (see Definition~\ref{def:generalized shearer sequence limits}).

To justify our choices, we start by pointing out that for a
starlike $T_{j} \in \mathbb{T}$, its position, that is dependent on the index $j$, affects $\rho_{L}(G_{k})$ in different ways. In fact, if $j=1$ then $ {\rm deg}(v_1)=1+\omega(T_{j})$ and $\rho_{L}(G_{k}) \geq 2+\omega(T_{j})$. However, if $j>1$ then ${\rm deg}(v_j)=2+\omega(T_{j})$ and $\rho_{L}(G_{k}) \geq 3+\omega(T_{j})$. Thus, in order to stay within an interval $[m, m+1]$ we must control $\omega(T_{j})$ across the entire sequence. For instance for $\mathbb{T}=([1,1],T_{2},...) \in \mathcal{L}_{\infty}$ we can have limit points in $[4,5]$ but we need to require $\omega(T_{j})\leq 1$ for $j \geq 2$, otherwise all the limit points will be in $[5,\infty)$. In other words every $G_k$ must be a quipu!

Another important remark is that if $\mathbb{T}=(T_{1},T_{2},...) \in \mathcal{L}_{\infty}$ is constant or pre-periodic, we can always compute explicitly the limit point, often being an algebraic number. Recall that we already show that for   $\mathbb{T}=([1,1,1],[1]^{\infty},\ldots) \in \mathcal{L}_{\infty}$ and $C=([1,1],[1,1]^{\infty},\ldots) \in \mathcal{L}_{\infty}$ we obtain
$$\displaystyle\lim_{k \to \infty} \rho_{L}(G_{k})=\mu_*$$
where $\mu_*=\frac{5+\sqrt{33}}{2} (= 5.3722+)$ is the larger root of  the polynomial $x^2-5x-2$.

Other  remarkable case is the sequence $\mathbb{T}=([0]^{\infty},\ldots) \in \mathcal{L}_{\infty}$  and $$C=([1,1],[1,2],[1,3],\ldots) \in \mathcal{L}_{\infty}.$$ A close inspection shows that\\
$G_{1}:=[[0]]$;\\
$G_{2}:=[[0], [1,1]]=[1,1,1]$;\\
$G_{3}:=[[0],[0], [1,2]]=[1,2,2]$\\
and so on. Thus $G_{k}:=[[0]^{k-1}, [1,k-1]]=[1,k-1,k-1]$. Obviously $G_{k}$ satisfy the subgraph condition constituting a generalized Shearer sequence. By \cite{OliveTrevAppDiff}, Theorem 4.2, we know that
$$\displaystyle\lim_{k \to \infty} \rho_{L}(G_{k})=2+\epsilon= 4.382975+$$
where $\epsilon$ is the real root of $x^3-4x-4$. It is worth mentioning that the convergence is quite fast, for instance $\rho_{L}([[0], [0], [0], [0], [0], [0], [0], [0], [1, 8]])=4.3829331+$ is already $10^{-4}$ close to the actual value!

\begin{example}\label{ex: a particular family}
Motivated by the above considerations we may take a closer look at the following family of  generalized Shearer sequences:
\begin{equation}\label{eq:family interval}
   \mathcal{F}_{1}:=\{G_{k}(\mathbb{T}, C) \; | \; T_1=[1,1,1], \; T_{j}\in \{[0], [1], [1,1]\}, j\geq 2,\; \text{ and } C \text{ is a shift of }\mathbb{T}\}
\end{equation}
As we observed before $\displaystyle\lim_{k \to \infty} \rho_{L}(G_{k})>5$ for any $G_{k} \in \mathcal{F}_{1}$, thus the question is the distribution of these limit points in the interval $[5, 5+m], \; m \geq 1$.

To give some idea on that distribution we run a numerical experiment sampling 3000 sequences for  $G_{k} \in \mathcal{F}_{1}$ and plot, in the vertical coordinate the value $\rho_{L}(G_{100})$ in Figure~\ref{fig:distrib f1}.
\begin{figure}[H]
  \centering
  \includegraphics[width=5cm]{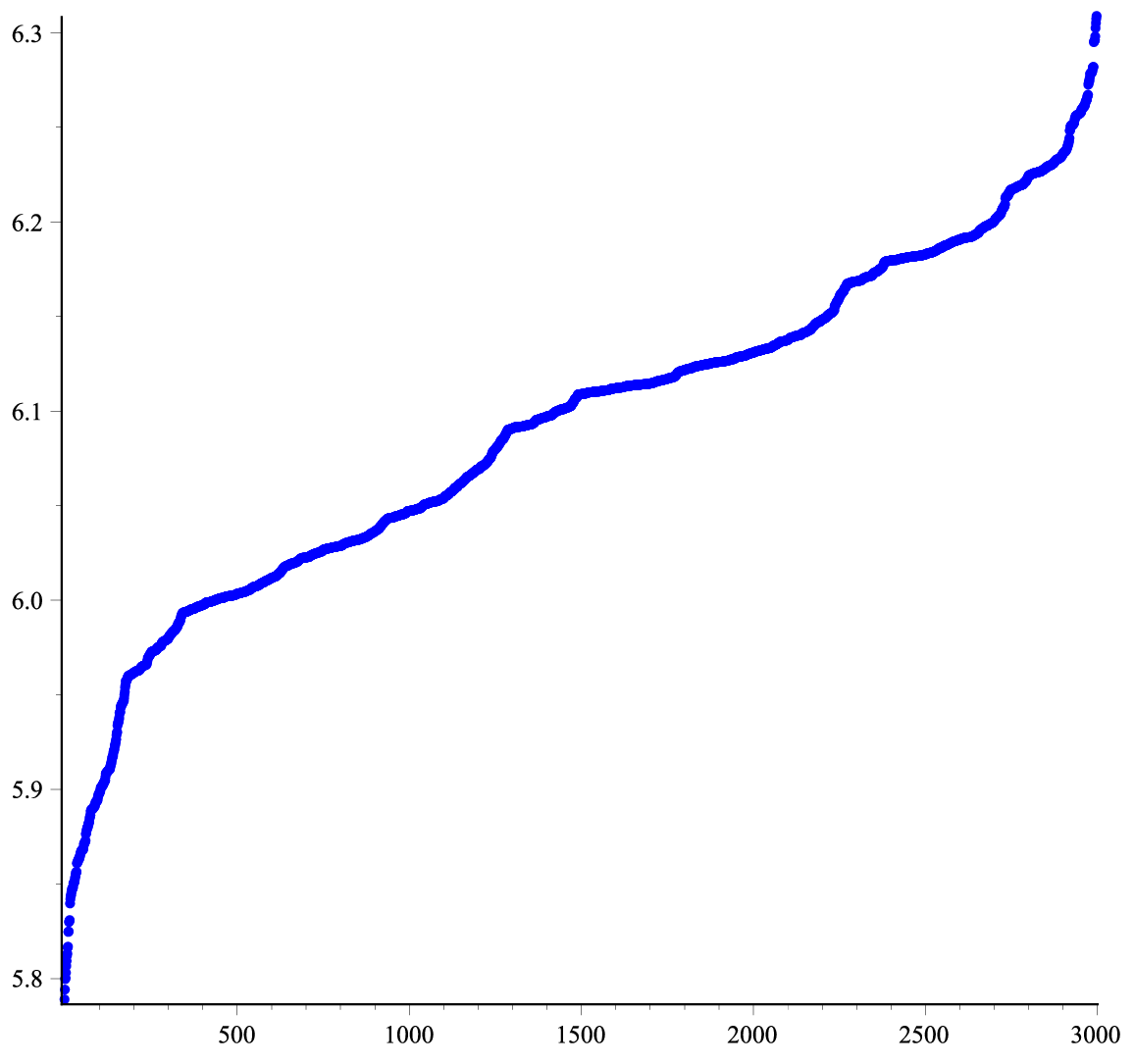}\hspace{0.5cm}
  \includegraphics[width=5cm]{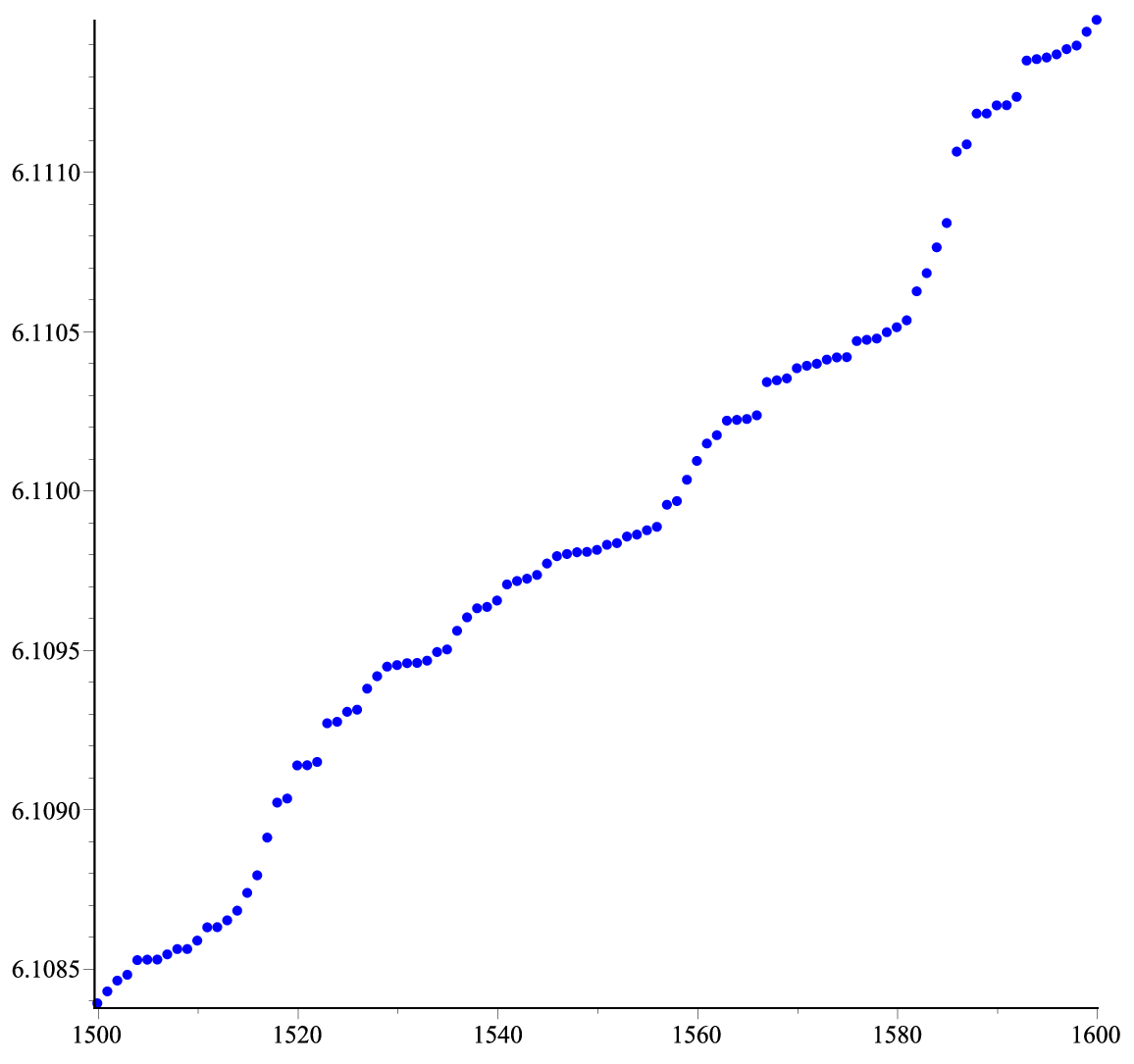}
  \caption{Limit points distribution (in the left) and a zoom of it (in the right).}\label{fig:distrib f1}
\end{figure}
Figure~\ref{fig:distrib f1} suggests an abundance of limit points, but we need a rigorous study to quantify it. In the experiment  $\{[0], [1], [1,1]\}$ were sorted with equal probability, which may affect the variability (see the zoom on the right of Figure~\ref{fig:distrib f1}). The smallest gap is $1.4 \times 10^{-8}$ while the largest is  $0.01+$. We limit in 3000 the number of samples, because larger simulations are computationally prohibitive and pointless to establish density results.
\end{example}

In order to compare different limit points, we start with some general properties regarding the perturbation of a generalized Shearer sequence. In the next result we will use a truncation process, where a sequence $\mathbb{T}=(T_{1},T_{2},T_{3},\ldots) $ is replaced by $\tilde{\mathbb{T}}=(T_{1},T_{2},T_{3},\ldots,T_{k_{0}}, [0]^{\infty})\in \mathcal{L}_{\infty}$ always having a smaller limit point because, after $k_{0}$ there is no drift in the recurrence relation used to compute ${\rm Diagonalize}(\hat{G}_{k}, -\mu_{0})$ for $\displaystyle\lim_{k \to \infty} \rho_{L}(G_{k})=\mu_{0}$. Then, $S_{k_0}<\theta^{-1}$ means that $S_{k_0  +i} =\psi(S_{k_0})<\psi(\theta^{-1})= \theta^{-1}$, for all $ i \geq 1$.
Perhaps more interesting is the fact that this truncation process provides a set of algebraic limit points that is dense in a subset of $\mathcal{S}$ (see Definition~\ref{def:random laplacian shearer limit points}).

\begin{theorem}\label{thm: disturb shearer sequence}
   Let $G_{k}(\mathbb{T}, C)$ be a generalized Shearer sequence where $C=(T_{2},T_{3}, \ldots)$ is a shift of  $\mathbb{T}=(T_{1},T_{2},T_{3},\ldots) $ and $\displaystyle\lim_{k \to \infty} \rho_{L}(G_{k})=\mu_{0} \in \mathcal{S}$. Consider $\varepsilon>0$ and $k_{\varepsilon} \geq 1$ such that $\rho_{L}(G_{k_\epsilon})> \mu_{0} - \varepsilon$.
   \begin{enumerate}
     \item[$(a)$] If $\hat{\mathbb{T}}=(T_{1},T_{2},T_{3},\ldots, T_{j_{0}-1},\hat{T}_{j_{0}}, T_{j_{0}+1}, \ldots)$, defines the sequence $\hat{G}_{k}:=G_{k}(\hat{\mathbb{T}})$ and $\delta(\hat{T}_{j_{0}}, \mu_{0})> \delta(T_{j_{0}}, \mu_{0})$ then $\displaystyle \lim_{k \to \infty} \rho_{L}(\hat{G}_{k}) \geq \mu_{0}$;
     \item[$(b)$] If $\tilde{\mathbb{T}}=(T_{1},T_{2},T_{3},\ldots,T_{k_{\varepsilon}}, [0]^{\infty})$, defines the sequence $\tilde{G}_{k}:=G_{k}(\tilde{\mathbb{T}})$ and $$\displaystyle\lim_{k \to \infty} \rho_{L}(\tilde{G}_{k})=\mu_{1}$$ then $\mu_{1} \in (\mu_{0} - \varepsilon , \mu_{0}]$;
     \item[$(c)$] The set $\mathcal{A}$ of Laplacian limit points of sequences defined by $$\tilde{\mathbb{T}}=(T_{1},T_{2},T_{3},\ldots,T_{k_{0}}, [0]^{\infty})\in \mathcal{L}_{\infty}$$ is algebraic;
     \item[$(d)$] The set $\mathcal{A}$ is dense in the set $\mathcal{B}$ of Laplacian limit points of sequences defined by  $\mathbb{T}=(T_{1},T_{2},\ldots) \in \mathcal{L}_{\infty}$ and $C=(T_{2},T_{3},\ldots) \in \mathcal{L}_{\infty}$.
   \end{enumerate}
\end{theorem}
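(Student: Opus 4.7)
For part $(a)$, I would run ${\rm Diagonalize}(\hat{G}_k,-\mu_0)$ and ${\rm Diagonalize}(G_k,-\mu_0)$ in parallel. Their iterates coincide for $j<j_0$; at $j_0$ the hypothesis $\delta(\hat{T}_{j_0},\mu_0)>\delta(T_{j_0},\mu_0)$ gives $\hat{S}_{j_0}(\mu_0)>S_{j_0}(\mu_0)$; and because $\psi'(t)=1/t^2>0$, an immediate induction propagates $\hat{S}_j(\mu_0)\geq S_j(\mu_0)$ to every $j\geq j_0$. I then split on whether the perturbed iterates ever leave the attracting region. If some $\hat{S}_j(\mu_0)$ reaches $(\theta^{-1}(\mu_0),0)$, the expanding-dynamics argument from Lemma~\ref{lem:converse gen random} forces $\hat{S}_{j+m}(\mu_0)\geq 0$ for some $m$, so $\rho_L(\hat{G}_k)\geq\mu_0$ for $k$ large. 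Otherwise all $\hat{S}_j(\mu_0)$ stay in $(-\infty,\theta^{-1}(\mu_0))$; combining
\[
\hat{S}_{j+1}-S_{j+1}=\psi(\hat{S}_j)-\psi(S_j)=\frac{\hat{S}_j-S_j}{\hat{S}_j\,S_j}
\]
with the fact that $\hat{S}_j S_j$ is eventually bounded below by a quantity exceeding $1$ once iterates sit deep enough in the basin of $\theta(\mu_0)$ yields $\hat{S}_j-S_j\to 0$ geometrically, so a zero-crossing comparison gives $|\rho_L(\hat{G}_k)-\rho_L(G_k)|\to 0$ and again the limit is $\geq\mu_0$.

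For part $(b)$, the shift structure makes $\tilde{G}_k=G_k$ for $k\leq k_\varepsilon$, so by monotonicity of the generalized Shearer sequence $\mu_1\geq\rho_L(\tilde{G}_{k_\varepsilon})=\rho_L(G_{k_\varepsilon})>\mu_0-\varepsilon$. For the upper bound, evaluate ${\rm Diagonalize}(\tilde{G}_k,-\mu_0)$: Lemma~\ref{lem:converse gen random} yields $\tilde{S}_j(\mu_0)<\theta^{-1}(\mu_0)$ for $j\leq k_\varepsilon$, and for $j>k_\varepsilon$ the recurrence reduces to the unperturbed $\tilde{S}_j=\psi(\tilde{S}_{j-1})$ since $\delta([0],\mu_0)=0$. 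Because $\psi$ sends $(-\infty,\theta^{-1}(\mu_0))$ into itself and pulls iterates toward $\theta(\mu_0)<0$, we get $\tilde{S}_k(\mu_0)<0$ for every $k$, hence $\rho_L(\tilde{G}_k)<\mu_0$ for all $k$ and therefore $\mu_1\leq\mu_0$.

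For part $(c)$, I would derive the closed form $\tilde{S}_k(\mu)=-1+\psi^{k-k_\varepsilon}\!\bigl(S_{k_\varepsilon}(\mu)+1\bigr)$ valid for $k>k_\varepsilon$, where $S_{k_\varepsilon}(\mu)$ is the last output of ${\rm Diagonalize}(G_{k_\varepsilon},-\mu)$, a rational function of $\mu$ with integer coefficients coming from finitely nested rational drifts $\delta(T_j,\mu)$. The limit $\mu_1$ is the unique value at which the iterate $S_{k_\varepsilon}(\mu_1)+1$ sits exactly on the separatrix $\theta^{-1}(\mu_1)$: for $\mu<\mu_1$ the iterates escape the basin and $\tilde{S}_k(\mu)>0$ eventually, while for $\mu>\mu_1$ they are attracted to $\theta(\mu)$ and $\tilde{S}_k(\mu)\to\theta(\mu)-1<0$. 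Plugging the degree-two algebraic function $\theta^{-1}(\mu)=\frac{-(\mu-2)+\sqrt{(\mu-2)^2-4}}{2}$ into $S_{k_\varepsilon}(\mu_1)+1=\theta^{-1}(\mu_1)$ and squaring away the radical yields an integer polynomial annihilating $\mu_1$, so $\mathcal{A}$ consists of algebraic numbers. Part $(d)$ is then immediate by combining $(b)$ and $(c)$: for any $\mu_0\in\mathcal{B}$ and $\varepsilon>0$, part $(b)$ delivers a truncated sequence with limit $\mu_1\in(\mu_0-\varepsilon,\mu_0]$ and part $(c)$ places $\mu_1\in\mathcal{A}$, establishing density.

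The main obstacle is the second regime of part $(a)$: the hypothesis $\delta(\hat{T}_{j_0},\mu_0)>\delta(T_{j_0},\mu_0)$ is pointwise at $\mu_0$, so converting the inequality $\hat{S}_j(\mu_0)\geq S_j(\mu_0)$ into the spectral-radius inequality $\rho_L(\hat{G}_k)\geq\rho_L(G_k)$ requires extending the drift comparison to a neighborhood of $\mu_0$ via continuity of $\delta(T,\cdot)$ and then exploiting that $\rho_L(G_k)\to\mu_0$ so that only a vanishing $\mu$-window has to be controlled as $k\to\infty$; pinning down this perturbative comparison cleanly is the technical heart of the proof.
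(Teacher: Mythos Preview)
Your treatment of parts $(b)$, $(c)$ and $(d)$ is essentially the same as the paper's. In $(b)$ you use Lemma~\ref{lem:converse gen random} exactly as the authors do; in $(c)$ your dynamical characterization $S_{k_\varepsilon}(\mu_1)+1=\theta^{-1}(\mu_1)$ is equivalent to the paper's equation $S_{k_0}=\theta^{-1}$ (your ``$+1$'' correctly compensates for the fact that you read $S_{k_\varepsilon}$ off the \emph{end} node of $G_{k_\varepsilon}$, whereas the paper reads it as an \emph{interior} node of $\tilde G_k$), and squaring out the radical gives the same integer polynomial; and $(d)$ is identical.

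The gap is in part $(a)$, precisely in your second regime. The claim that ``$\hat S_j S_j$ is eventually bounded below by a quantity exceeding $1$'' is not true in general: with non-trivial drifts $\delta(T_j,\mu_0)$ the iterates $S_j$ need not settle near $\theta$ and can remain close to $\theta^{-1}\in(-1,0)$, so that $\hat S_j S_j<1$ and the difference $\hat S_{j+1}-S_{j+1}=(\hat S_j-S_j)/(\hat S_j S_j)$ actually \emph{expands}. Consequently neither the geometric decay of $\hat S_j-S_j$ nor the asserted ``zero-crossing comparison'' $|\rho_L(\hat G_k)-\rho_L(G_k)|\to 0$ is justified, and your case split does not close.

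The paper sidesteps the whole propagation problem by a different diagonalization: instead of rooting at $v_k$ and evaluating at $\mu_0$, root the tree at $v_{j_0}$ and evaluate at $\mu:=\rho_L(G_k)$. Then the two branches of $G_k$ and $\hat G_k$ on either side of $v_{j_0}$ coincide, the root value for $G_k$ is exactly $0$, and the root value for $\hat G_k$ is
\[
f_{\hat G_k}(v_{j_0})=\delta(\hat T_{j_0},\mu)-\delta(T_{j_0},\mu),
\]
which is positive for all large $k$ by continuity of $\delta(\cdot,\mu)$ and $\rho_L(G_k)\to\mu_0$. One application of Lemma~\ref{lem:loc} gives $\rho_L(\hat G_k)>\rho_L(G_k)$ directly, with no case analysis and no need to control how the perturbation evolves along the path. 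Your ``main obstacle'' paragraph already gestures at this continuity idea, but the missing ingredient is to change the root to $v_{j_0}$ so that only a single value has to be compared.
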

\begin{proof}
   (a) Since $\hat{G}_{k}=G_{k}$ for $k < j_{0}$, we may assume $k > j_{0}$. This time we will apply ${\rm Diagonalize}(G_{k}, -\mu)$ and ${\rm Diagonalize}(\hat{G}_{k}, \mu)$ with $\mu:=\rho_{L}(G_{k})$, take $v_{j_{0}}$ as root, $f_{G_{k}}(v_{j_{0}})=0$ the final value in $G_{k}$, and $f_{\hat{G}_{k}}(v_{j_{0}})$ the final value in $\hat{G}_{k}$. By Lemma~\ref{lem:loc}, we know that $\rho_{L}(\hat{G}_{k})> \rho_{L}(G_{k})$ if $f_{\hat{G}_{k}}(v_{j_{0}})>0$.

   By direct computation, we obtain
   $$0=f_{G_{k}}(v_{j_{0}})= 2-\mu -\frac{1}{f_{G_{k}}(v_{j_{0} -1})} -\frac{1}{f_{G_{k}}(v_{j_{0}+1})} + \delta(T_{j_{0}}, \mu),$$
   or
   $$2-\mu -\frac{1}{f_{G_{k}}(v_{j_{0} -1})} -\frac{1}{f_{G_{k}}(v_{j_{0}+1})} = -\delta(T_{j_{0}}, \mu).$$
   On the other hand, using the above equation we get
   $$f_{\hat{G}_{k}}(v_{j_{0}})= 2-\mu -\frac{1}{f_{G_{k}}(v_{j_{0} -1})} -\frac{1}{f_{G_{k}}(v_{j_{0}+1})} + \delta(\hat{T}_{j_{0}}, \mu)= \delta(\hat{T}_{j_{0}}, \mu) - \delta(T_{j_{0}}, \mu) >0,$$
   by hypothesis.\\

   (b) The easy part is to prove that $\mu_{1} > \mu_{0} - \varepsilon$. We just need to observe that, by hypothesis, $\rho_{L}(G_{k_{\varepsilon}})> \mu_{0} - \varepsilon$ and for $k > k_{\varepsilon}$ we have $\tilde{G}_{k}=[T_{1},T_{2},T_{3},\ldots,T_{k_{\varepsilon}}, [0]^{k-k_{\varepsilon}}]$, meaning that, $G_{k_{\varepsilon}}$ is a subgraph of  $\tilde{G}_{k}$. Thus, $\displaystyle\mu_{1} = \lim_{k \to \infty} \rho_{L}(\tilde{G}_{k}) \geq \rho_{L}(G_{k_{\varepsilon}})> \mu_{0} - \varepsilon$.

   In order to see that $\mu_{1} \leq \mu_{0}$ we need a more sophisticated argument. Again we apply ${\rm Diagonalize}(G_{k}, -\mu_{0})$ and ${\rm Diagonalize}(\hat{G}_{k}, -\mu_{0})$ taking $v_{k}$ as root in both graphs.
   Let $S_1,\ldots, S_{k-1}, S_{k}$ the output of  ${\rm Diagonalize}(G_{k}, -\mu_{0})$ at the vertices $v_{1}, \ldots, v_{k-1}, v_{k}$ and  $\tilde{S}_{1}, \ldots, \tilde{S}_{k-1}, \tilde{S}_{k}$ the output of  ${\rm Diagonalize}(\tilde{G}_{k}, -\mu_{0})$ at the vertices $v_{1}, \ldots, v_{k-1}, v_{k}$.

From Lemma~\ref{lem:converse gen random} we know that  if $\displaystyle\lim_{k \to \infty} \rho_{L}(G_{k})=\mu_{0}$ and   $\theta_{0}:= \frac{-(\mu_{0} -2) -\sqrt{(\mu_{0} -2)^2 - 4}}{2}$, then
   \[
\left\{
  \begin{array}{ll}
    S_{1}=1  -\mu_{0} +  \delta(T_{1},\mu_{0}) < \theta^{-1}_{0}; \\
    \psi(S_{j-1}) + \delta(T_{j},\mu_{0}) < \theta^{-1}_{0},\; 2 \leq j \leq k.
  \end{array}
\right.
\]
   Assuming $k >  k_{\varepsilon}$ (if $k \leq  k_{\varepsilon}$ then $\rho_{L}(\tilde{G}_{k}) = \rho_{L}(G_{k}))$, we have $\tilde{S}_{j}=S_{j}<\theta^{-1}_{0}<0, \; 1 \leq j \leq k_{\varepsilon}$. For the next index, we obtain
   $$S_{k_{\varepsilon}+1}= \psi(S_{k_{\varepsilon}}) + \delta(T_{k_{\varepsilon}+1},\mu_{0}) <  \theta^{-1}_{0}$$
   and
   $$\tilde{S}_{k_{\varepsilon}+1}= \psi(S_{k_{\varepsilon}}) + \delta([0],\mu_{0})= \psi(S_{k_{\varepsilon}}) <  \theta^{-1}_{0}.$$
   Proceeding in this way we obtain
   $$\tilde{S}_{k_{\varepsilon}+m}= \psi(S_{k_{\varepsilon}+m-1}) + \delta([0],\mu_{0})= \psi^{m-1}(S_{k_{\varepsilon}}) <  \theta^{-1}_{0}.$$
   In particular, as $\psi(S_{k-1}) < \theta^{-1}_{0}$ implies that
   $\tilde{S}_{k}= -1+ \psi(S_{k-1})   < -1+ \theta^{-1}_{0}< 0$.
   Thus $\rho_{L}(\tilde{G}_{k}) < \mu_{0}$ for $k \geq  k_{\varepsilon}$ and $\displaystyle \mu_{1}=\lim_{k \to \infty} \rho_{L}(\tilde{G}_{k}) \leq \mu_{0}$.

   (c) Take $\mu \in \mathcal{A}$, then there exists $\mu_{k}:=\rho_{L}(\tilde{G}_{k})$, defined by $\tilde{\mathbb{T}}=(T_{1},T_{2},T_{3},...,T_{k_{0}}, [0]^{\infty})\in \mathcal{L}_{\infty}$ such that $\displaystyle \mu=\lim_{k \to \infty} \rho_{L}(\tilde{G}_{k})$.

   Consider $S_{1}, \ldots, S_{k-1}, S_{k}$ the output of  ${\rm Diagonalize}(\tilde{G}_{k}, -\mu_{k})$ at the vertices $v_{1}, \ldots, v_{k-1}, v_{k}$. We know that $S_{1}, \ldots, S_{k_{0}-1}, S_{k_{0}}$ are negative rational functions with respect to the variable $\mu_{k}$.  Now, $S_{k_{0}+1} = \psi(S_{k_{0}})+\delta([0],\mu_{k})= \psi(S_{k_{0}})$, $S_{k_{0}+2} = \psi(S_{k_{0}+1})+\delta([0],\mu_{k})= \psi(S_{k_{0}+1})$ until we reach $S_{k} =-1+ \psi(S_{k-1})+\delta([0],\mu_{k})= -1+ \psi(S_{k-1}) =0,$ by construction, or equivalently  $\mu_{k}$ must solve the equation
   $$\psi(S_{k-1})=1.$$

   Defining $a_{1}= S_{k_{0}}$, $a_{2}= S_{k_{0}+1} = \psi(a_{1})$, \ldots, $a_{k-k_{0}}= S_{k_{0}+(k-k_{0})} = \psi(S_{k-1})$, where   $\psi(t)=2-\mu_{k} -\frac{1}{t}, t\neq 0$.  From \cite{OliveTrevAppDiff} we know that $\Delta=(2- \mu_{k})^2-4>0$ thus
$$a_{j}:=\theta_{k}  +\frac{\theta_{k}^{-1} -\theta_{k}}{\beta_{k}\theta_{k}^{2(j -1)}+1},\; 1 \leq j,$$
where $\beta_{k}:=\frac{\theta_{k}^{-1} -\theta_{k}}{a_{1} -\theta_{k}} -1$ and $\theta_{k}=\frac{(2- \mu_{k})- \sqrt{(2- \mu_{k})^2-4}}{2}$.

   Now we are able to rewrite the equation $\psi(S_{k-1})=1$  as $a_{k-k_{0}}=1$ or
   $$\theta_{k}  +\frac{\theta_{k}^{-1} -\theta_{k}}{\beta_{k}\theta_{k}^{2(k-k_{0}-1)}+1} =1$$
   $$ \theta_{k}^{-1} -\theta_{k}  =(1 -\theta_{k}) \left(\beta_{k}\theta_{k}^{2(k-k_{0}-1)}+1\right)$$
   $$ \theta_{k}^{-1} -\theta_{k}  =(1 -\theta_{k}) \left(\left( \frac{\theta_{k}^{-1} -\theta_{k}}{S_{k_{0}} -\theta_{k}} -1 \right)\theta_{k}^{2(k-k_{0}-1)}+1\right)$$
   $$ (\theta_{k}^{-1} -\theta_{k})(S_{k_{0}} -\theta_{k})  =
   (1 -\theta_{k}) \left(\left( \theta_{k}^{-1} - S_{k_{0}}\right)\theta_{k}^{2(k-k_{0}-1)}+  S_{k_{0}} -\theta_{k}\right)$$
   $$ \frac{(\theta_{k}^{-1} -\theta_{k})(S_{k_{0}} -\theta_{k})}{\theta_{k}^{2(k-k_{0}-1)}}  =
   (1 -\theta_{k}) \left(\left( \theta_{k}^{-1} - S_{k_{0}}\right)+  \frac{S_{k_{0}} -\theta_{k}}{\theta_{k}^{2(k-k_{0}-1)}}\right).$$
   Notice that $(\theta_{k}^{-1} -\theta_{k})= \sqrt{(2- \mu_{k})^2-4} \not\to 0$ when $k \to \infty$. Thus, or  $ S_{k_{0}} -\theta_{k}\to 0$ when $k \to \infty$ or taking the limit in the above equation we obtain $ \theta_{k}^{-1} - S_{k_{0}} \to 0$ when $k \to \infty$.
   Thus $\displaystyle \mu=\lim_{k \to \infty} \rho_{L}(\tilde{G}_{k})$ must solve one of the equations
   $$\theta^{-1} =  S_{k_{0}}\text{ or }\theta =  S_{k_{0}}.$$
   All in all, as $S_{k_{0}}= \frac{P(\mu)}{Q(\mu)}$ we conclude that $\mu$ must solve
   $$\frac{(2- \mu)\pm \sqrt{(2- \mu)^2-4}}{2} = \frac{P(\mu)}{Q(\mu)}$$
   $$\pm \sqrt{(2- \mu)^2-4} = \frac{2 P(\mu)}{Q(\mu)} - (2- \mu)$$
   $$(2- \mu)^2-4 = \left(\frac{2 P(\mu)}{Q(\mu)} - (2- \mu)\right)^{2}$$
   $$(2- \mu)^2-4 = 4\left(\frac{P(\mu)}{Q(\mu)}\right)^{2} - 4(2- \mu)\left(\frac{P(\mu)}{Q(\mu)}\right) + (2- \mu)^{2}$$
   $$-4 = 4\left(\frac{P(\mu)}{Q(\mu)}\right)^{2} - 4(2- \mu)\left(\frac{P(\mu)}{Q(\mu)}\right) $$
   $$\left(\frac{P(\mu)}{Q(\mu)}\right)^{2} - (2- \mu)\left(\frac{P(\mu)}{Q(\mu)}\right)+1 =0 $$
   $$P^2(\mu) - (2- \mu)\left(P(\mu)Q(\mu)\right)+ Q^2(\mu) =0.$$
   After a simplification we conclude that $\mu$ is a real root of the polynomial at the numerator, meaning that $\mu$ is an algebraic number.

   (d) Consider $\mu_{0} \in \mathcal{B}$  and $\varepsilon > 0$ arbitrarily small. By (b) we can find $k_{\varepsilon}$  such that $\tilde{\mathbb{T}}_{\varepsilon}=(T_{1},T_{2},T_{3},...,T_{k_{\varepsilon}}, [0]^{\infty})$ defines a sequence $\tilde{G}_{k}:=G_{k}(\tilde{\mathbb{T}_{\varepsilon}})$ such that $\displaystyle\lim_{k \to \infty} \rho_{L}(\tilde{G}_{k})=\mu_{1} \in (\mu_{0} - \varepsilon , \mu_{0}]$. As $\mu_{1} \in \mathcal{A}$, this proves the density.
\end{proof}

The next general result is on the set of possible limit points \emph{dominated} by $\mu$.
\begin{definition}\label{def: dominated limit points}
   Let $\mu \geq 4.38+$ be a fixed real number. A generalized Shearer sequence  $G_{k}(\mathbb{T}, C)$ is dominated by $\mu$ if  $\displaystyle \lim_{k \to \infty} \rho_{L}(G_{k}) \leq \mu$. We denote by $\Gamma(\mu)$ the set of all generalized Shearer sequences  $G_{k}(\mathbb{T}, C)$ which are dominated by $\mu$.
\end{definition}
We notice that if $\Gamma(\mu)$ is known, this means we completely understand the approximation of $\mu$ by Laplacian limit points originated by linear trees.
In other words, if
$$\sup_{G_{k}\in \Gamma(\mu)}  \lim_{k \to \infty} \rho_{L}(G_{k}) < \mu$$
then either $\mu$ is not a Laplacian limit point or the approximation must be taken through a wider class such as general trees or general graphs. On the other hand if
$$\sup_{G_{k}\in \Gamma(\mu)}  \lim_{k \to \infty} \rho_{L}(G_{k}) = \mu$$
then $\mu$ is certainly a Laplacian limit point, but we do not know whether $\mu \in \mathcal{S}$ that is, whether the supremum is attained by a sequence of linear trees.
\begin{lemma}\label{lem: dominated limit points} Let $\mu \geq 4.38+$ be a fixed real number and let $G_{k}(\mathbb{T}, C)$ be a generalized Shearer sequence where $C=(T_{2},T_{3}, \ldots)$ is a shift of  $\mathbb{T}=(T_{1},T_{2},T_{3},\ldots) $.
   \begin{enumerate}
     \item[$(a)$] If $G_{k}(\mathbb{T}, C) \in \Gamma(\mu)$ then
     $$\omega(T_{j}) \leq  \delta(T_j,\mu) < \mu,$$
     for all $j\geq 1$. The same is true for $C$, that is, $\omega(C_k) \leq  \delta(C_k,\mu) < \mu$;

     \item[$(b)$] Consider $G_{k}(\mathbb{T}, C) \in \Gamma(\mu)$ and a new sequence $$\tilde{\mathbb{T}}=(T_{1},T_{2},T_{3},\ldots, T_{j_{0}-1},\tilde{T}_{j_{0}}, T_{j_{0}+1}, \ldots)$$ defining $\tilde{G}_{k}:=G_{k}(\tilde{\mathbb{T}})$. If $\delta(\tilde{T}_{j_{0}}, \mu)< \delta(T_{j_{0}}, \mu)$ then $\tilde{G}_{k} \in \Gamma(\mu)$;

     \item[$(c)$] Consider $G_{k}(\mathbb{T}, C) \in \Gamma(\mu)$ and starlike trees $\hat{T}_{j_{0}}, \hat{T}_{j_{0}+1}$ such that  $\delta(\hat{T}_{j_{0}}, \mu) \leq \delta(T_{j_{0}}, \mu)$ and
     $\displaystyle\theta^{-1} - \psi(\hat{S}_{j_{0}})> \delta(\hat{T}_{j_{0}+1}, \mu) \geq \delta(T_{j_{0}+1}, \mu).$
There exist some $m \in \mathbb{N}$ and starlike trees $\hat{T}_{j_{0}+2}, \ldots,\hat{T}_{j_{0}+m}$ such that, defining $$\hat{\mathbb{T}}=(T_{1},T_{2},T_{3},\ldots, T_{j_{0}-1},\hat{T}_{j_{0}}, \hat{T}_{j_{0}+1}, \ldots,\hat{T}_{j_{0}+m}, T_{j_{0}+m+1}, \ldots),$$ we obtain a generalized Shearer sequence $\hat{G}_{k}:=G_{k}(\hat{\mathbb{T}},C)$ with  $\hat{G}_{k} \in \Gamma(\mu)$.
   \end{enumerate}
\end{lemma}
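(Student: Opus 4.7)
The plan is to treat parts (a) and (b) as essentially immediate consequences of Lemma~\ref{lem:converse gen random} combined with the monotonicity of $\psi$ on $(-\infty, 0)$, and to treat (c) as the substantive case, which I would handle by inserting a ``damper'' of empty starlike trees that relaxes the perturbed iterate back toward the attracting fixed point $\theta$.

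For (a), I first observe that $G_k(\mathbb{T},C) \in \Gamma(\mu)$ with strictly increasing spectral radii forces $\rho_L(G_k) < \mu$ for every $k$; the expansion argument from the proof of Lemma~\ref{lem:converse gen random} (that $\psi$ repels points of $(\theta^{-1}, 0)$ and the $T_j$'s add only nonnegative drifts) then yields $S_j < \theta^{-1} < 0$ at every step, at the current $\mu$. Given this, the lower bound $\omega(T_j) \leq \delta(T_j,\mu)$ is immediate from~\eqref{eq:defdelta}, since each of the $\omega(T_j)$ summands $1 - 1/b_{j,m}$ strictly exceeds $1$ (because $b_{j,m} < 0$). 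For the upper bound, I rearrange $\psi(S_{j-1}) + \delta(T_j,\mu) < \theta^{-1}$ to obtain $\delta(T_j,\mu) < \theta^{-1} - \psi(S_{j-1})$, then apply $\psi(S_{j-1}) = 2 - \mu - 1/S_{j-1} > 2 - \mu$ (since $S_{j-1} < 0$) to conclude $\delta(T_j,\mu) < \theta^{-1} + \mu - 2 < \mu$; the $j=1$ case and the $C_k$ case are handled analogously from $S_1 = 1-\mu+\delta(T_1,\mu) < \theta^{-1}$ and the corresponding expression for $S_k$.

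For (b), the key fact is that $\psi'(t) = 1/t^2 > 0$ on $(-\infty, 0)$, so $\psi$ is strictly increasing there. The hypothesis $\delta(\tilde{T}_{j_0},\mu) < \delta(T_{j_0},\mu)$ gives $\tilde S_{j_0} < S_{j_0}$ immediately, and a straightforward induction on $j \geq j_0$ using $\tilde{T}_j = T_j$ and the monotonicity of $\psi$ propagates this to $\tilde S_j < S_j$ for every $j \geq j_0$. Since $S_j < \theta^{-1}$ by part~(a), we obtain $\tilde S_j < \theta^{-1} < 0$ as well, so $\rho_L(\tilde G_k) < \mu$ for every $k$ by Lemma~\ref{lem:detmu}; the canonical shift structure preserves the subgraph condition, so $\rho_L(\tilde G_k)$ remains strictly increasing and $\tilde G_k \in \Gamma(\mu)$.

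For (c), the plan is to set $\hat{T}_{j_0+j} := [0]$ for $2 \leq j \leq m$, so that $\hat S_{j_0+j} = \psi^{j-1}(\hat S_{j_0+1})$ for $1 \leq j \leq m$. Because $\hat S_{j_0+1} < \theta^{-1}$ by the hypothesis on $\hat T_{j_0+1}$ and $\theta$ is the attracting fixed point of $\psi$ on $(-\infty, \theta^{-1}]$, the iterate $\hat S_{j_0+m}$ converges exponentially to $\theta$ as $m$ grows. Once an $m$ with $\hat S_{j_0+m} \leq S_{j_0+m}$ is secured, the monotonicity induction of (b) applied with the unchanged tail $T_{j_0+m+1}, T_{j_0+m+2}, \ldots$ produces $\hat S_j \leq S_j < \theta^{-1}$ for all subsequent $j$; Lemma~\ref{lem:detmu} then gives $\hat G_k \in \Gamma(\mu)$, and the canonical shift once more yields the generalized Shearer property. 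The hard part will be the existence of such an $m$: when $\liminf_j S_j > \theta$, this is immediate by taking $m$ large enough that $|\hat S_{j_0+m} - \theta|$ falls below $\liminf_j S_j - \theta$. The delicate regime is when $S_j$ itself accumulates at $\theta$ from above (which can occur when the tail of $\mathbb{T}$ consists of empty trees); there, since $\psi$-iteration cannot push $\hat S$ below $\theta$ when $\hat S_{j_0+1}>\theta$, I expect one must allow the intermediate $\hat{T}_{j_0+j}$ to carry modest positive drifts that track $S_{j_0+j}$ from below, exploiting the realizability of any $\delta$-value below the upper bound of~(a) through starlike paths of suitable length to produce the required comparison at some finite $m$.
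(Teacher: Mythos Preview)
Your approach for all three parts matches the paper's proof. For (a) you use the slightly sharper lower bound $\psi(S_{j-1}) > 2-\mu$ where the paper uses $\psi(S_{j-1}) > 1-\mu$, but both yield $\delta(T_j,\mu) < \mu$; part (b) is identical to the paper's monotonicity induction; and for (c) you and the paper both insert a block of $[0]$'s, exploit the attraction to $\theta$ so that $\hat S_{j_0+m} = \psi^{m-1}(\hat S_{j_0+1})$ eventually drops below $S_{j_0+m}$, and then let the monotonicity argument of (b) carry the tail.

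Your worry about the edge case in (c)---when the tail of $\mathbb{T}$ is eventually all $[0]$'s so that $S_j$ accumulates at $\theta$---is legitimate, but the resolution is simpler than the ``modest positive drifts'' you sketch. In that regime you do not need the comparison $\hat S_{j_0+m} \leq S_{j_0+m}$ at all: choose $m$ large enough that $T_j = [0]$ for every $j > j_0+m$; then the tail of $\hat{\mathbb{T}}$ coincides with the tail of $\mathbb{T}$, so $\hat S_j = \psi(\hat S_{j-1})$ for all $j > j_0+m$. Since $\hat S_{j_0+1} < \theta^{-1}$ by hypothesis and $\psi$ maps $(-\infty,\theta^{-1}]$ into itself, this already forces $\hat S_j < \theta^{-1} < 0$ for every $j$, which is exactly what $\hat G_k \in \Gamma(\mu)$ requires via Lemma~\ref{lem:detmu}. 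The paper dispatches this case in a single clause (``unless the original sequence is formed only by $[0]$'s after $T_{j_{0}+2}$''), and this is the reason that clause suffices.
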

\begin{proof}
  (a) Recall that $\Pi(G_k,\mu) = \left(S_1, S_2,\ldots, S_k \right)$  where
  \[\left\{
    \begin{array}{ll}
      S_{1}=1-\mu + \delta(T_1,\mu),\\
      S_{j}=\psi(S_{j-1})+ \delta(T_j,\mu),\; 2 \leq j \leq k-1,\\
      S_{k}=-1+ \psi(S_{k-1})+ \delta(C_k,\mu).
    \end{array}
  \right.\]
    As $G_{k}(\mathbb{T}, C) \in \Gamma(\mu)$, we have  from Lemma~\ref{lem:converse gen random} that
    $$1-\mu + \delta(T_1,\mu) < \theta^{-1},$$
     but
    $$ \theta^{-1} - (1-\mu )= \frac{2-\mu + \sqrt{(2-\mu)^2 -4}}{2}  - (1-\mu)=$$
$$ = \frac{2-\mu  - 2(1-\mu) + \sqrt{(2-\mu)^2 -4}}{2} = \frac{\mu  + \sqrt{(2-\mu)^2 -4}}{2}$$ thus
    $$ \omega(T_1) \leq   \delta(T_1,\mu) < \frac{\mu + \sqrt{(2-\mu)^2 -4}}{2}.$$
    By construction, $S_{j}> S_{1}> 1-\mu$ for all $j$ and $S_{j}<  \theta^{-1} \Rightarrow \psi(S_{j-1})<  \theta^{-1}$ thus
    $$ 1-\mu + \delta(T_j,\mu)< S_{j}=\psi(S_{j-1})+ \delta(T_j,\mu) < \theta^{-1}$$
     $$\omega(T_j) \leq  \delta(T_j,\mu) < \theta^{-1} - (1-\mu ) =\frac{\mu + \sqrt{(2-\mu)^2 -4}}{2}.$$
Notice that
$$\frac{\mu + \sqrt{(2-\mu)^2 -4}}{2}= \frac{\mu }{2} \left( 1 + \sqrt{\left(\frac{2}{\mu}-1\right)^2 -\left(\frac{2}{\mu}\right)^2} \right)=$$
$$=\frac{\mu }{2} \left( 1 +
\sqrt{\left(\frac{2}{\mu}-1 + \frac{2}{\mu} \right)\left( \frac{2}{\mu}-1 - \frac{2}{\mu} \right)} \right)= \frac{\mu }{2} \left( 1 +
\sqrt{\left(1 - \frac{4}{\mu} \right)} \right) \leq \frac{\mu }{2} 2 = \mu,$$
because $\frac{4}{\mu}<1$.

Finally, $S_{k}=-1+ \psi(S_{k-1})+ \delta(C_k,\mu) <0$  and $1-\mu < S_{k-1}<  \theta^{-1} \Rightarrow 1-\mu < \psi(S_{k-1})<  \theta^{-1}$ thus $-\mu< -1+\psi(S_{k-1}) < -1+ \theta^{-1}$. So,
$$-1+ \psi(S_{k-1})+ \delta(C_k,\mu) <0$$
$$\delta(C_k,\mu) <-(-1+ \psi(S_{k-1}))< \mu$$
$$\omega(C_k)\leq  \delta(C_k,\mu) <-(-1+ \psi(S_{k-1}))< \mu.$$

(b) Consider $\Pi(G_k,\mu) = \left(S_1, S_2,\ldots, S_k \right)$  where (by Lemma~\ref{lem:converse gen random})
  \[\left\{
    \begin{array}{ll}
      S_{1}=1-\mu + \delta(T_1,\mu) < \theta^{-1},\\
      S_{j}=\psi(S_{j-1})+ \delta(T_j,\mu)<\theta^{-1},\; 2 \leq j \leq k-1,\\
      S_{k}=-1+ \psi(S_{k-1})+ \delta(C_k,\mu) < 0.
    \end{array}
  \right.\]
Now  we compute $\Pi:=(\tilde{G}_k,\mu) \mapsto \left(\tilde{S}_1, \tilde{S}_2,\ldots, \tilde{S}_k \right)$. We know that $\tilde{S}_j=S_j < \theta^{-1}<0 $ for $1\leq j_0-1$ and
$$\tilde{S}_{j_{0}}=\psi(S_{j_{0}-1})+ \delta(\tilde{T}_{j_{0}},\mu) -\delta(T_{j_{0}},\mu) + \delta(T_{j_{0}},\mu)= $$
$$=S_{j_{0}} +  \delta(\tilde{T}_{j_{0}},\mu) -\delta(T_{j_{0}},\mu)< S_{j_{0}} +0 < \theta^{-1}<0.$$
Since $\tilde{S}_{j_{0}}< S_{j_{0}}$, $\tilde{T}_{j_{0}+m}=T_{j_{0}+m},~ \forall m \geq 1$ and because $\psi$ is order preserving, we have $\tilde{S}_{j_{0}+m}< S_{j_{0}+m}< \theta^{-1}<0$,
$$\tilde{S}_{j_{0}+m}< S_{j_{0}+m}< \theta^{-1}$$
$$\psi(\tilde{S}_{j_{0}+m})< \psi(S_{j_{0}+m})< \psi(\theta^{-1})$$
$$\psi(\tilde{S}_{j_{0}+m})+0< \psi(S_{j_{0}+m})+0< \theta^{-1}$$
$$\tilde{S}_{j_{0}+(m+1)}< S_{j_{0}+(m+1)}< \theta^{-1}.$$
In particular $\tilde{S}_{j_{0}+1}< S_{j_{0}+1}< \theta^{-1}<0$. This concludes the proof because $\sign(\Pi):=(\tilde{G}_k,\mu) =\left(-, -,\ldots, - \right)$  thus $\rho_{L}(\tilde{G}_k) < \mu$  and $\displaystyle\lim_{k \to \infty }\rho_{L}(\tilde{G}_k) \leq  \mu$ or $\tilde{G}_k \in \Gamma(\mu)$.

(c) We have $\Pi(G_k,\mu) = \left(S_1, S_2,\ldots, S_k \right)$  where
  \[\left\{
    \begin{array}{ll}
      S_{1}=1-\mu + \delta(T_1,\mu)<\theta^{-1},\\
      S_{j}=\psi(S_{j-1})+ \delta(T_j,\mu)<\theta^{-1},\; 2 \leq j \leq k-1,\\
      S_{k}=-1+ \psi(S_{k-1})+ \delta(C_k,\mu)<0.
    \end{array}
  \right.\]
At the same time we have $\Pi(\hat{G}_k,\mu)= \left(\hat{S}_1, \hat{S}_2,\ldots, \hat{S}_k \right)$. For $1 \leq j_0 -1$ we have $\hat{S}_j =S_j <\theta^{-1}$ and for $\hat{S}_{j_{0}}$ we have
$$S_{j_{0}} = \psi(S_{j_{0}-1})+ \delta(T_{j_{0}},\mu)<\theta^{-1}$$
and by hypothesis
\begin{equation}\label{eq:delta balance}
  \delta(\hat{T}_{j_{0}}, \mu)< \delta(T_{j_{0}}, \mu) < \theta^{-1} - \psi(S_{j_{0}-1})
\end{equation}

On the other hand
$$\hat{S}_{j_{0}} = \psi(S_{j_{0}-1})+ \delta(\hat{T}_{j_{0}},\mu)< S_{j_{0}}$$
$$\psi(\hat{S}_{j_{0}})< \psi(S_{j_{0}})$$
$$\theta^{-1} - \psi(\hat{S}_{j_{0}})> \theta^{-1}- \psi(S_{j_{0}}).$$
Using this inequality in Equation~\eqref{eq:delta balance} we obtain
$$\delta(\hat{T}_{j_{0}}, \mu)< \delta(T_{j_{0}}, \mu) < \theta^{-1} - \psi(S_{j_{0}-1}) < \theta^{-1} - \psi(\hat{S}_{j_{0}})$$
$$\hat{S}_{j_{0}} = \psi(\hat{S}_{j_{0}}) + \delta(\hat{T}_{j_{0}}, \mu) < \theta^{-1}.$$

The next is $\hat{S}_{j_{0}+1} = \psi(\hat{S}_{j_{0}}) + \delta(\hat{T}_{j_{0}+1}, \mu)$. In order to have $\hat{S}_{j_{0}+1} < \theta^{-1}$ we need $\delta(\hat{T}_{j_{0}+1}, \mu) < \theta^{-1} - \psi(\hat{S}_{j_{0}})$.

Notice that
$$\theta^{-1} - \psi(\hat{S}_{j_{0}})> \theta^{-1}- \psi(S_{j_{0}})> \delta(T_{j_{0}+1}, \mu).$$

It is always possible  $\theta^{-1} - \psi(\hat{S}_{j_{0}}) > \delta(\hat{T}_{j_{0}+1}, \mu)$ (or equivalently $\hat{S}_{j_{0}+1} < \theta^{-1}$)  provided that
$$\theta^{-1} - \psi(\hat{S}_{j_{0}})> \delta(\hat{T}_{j_{0}+1}, \mu) \geq \delta(T_{j_{0}+1}, \mu),$$
in the worst case one may take $\hat{T}_{j_{0}+1}=T_{j_{0}+1}$.

Assuming our hypothesis, we know that $\hat{S}_{j_{0}+1} < \theta^{-1}$ we may take, in the worst case, $\hat{T}_{j_{0}+2}=[0]$, $\hat{T}_{j_{0}+3}=[0]$, ...,  $\hat{T}_{j_{0}+ m+1}=[0]$. This means that $\hat{S}_{j_{0}+2}= \psi(\hat{S}_{j_{0}+1}) < \theta^{-1}$, $\hat{S}_{j_{0}+3}= \psi^2 (\hat{S}_{j_{0}+1}) < \theta^{-1}$, ..., $\hat{S}_{j_{0}+ m+1}= \psi^m (\hat{S}_{j_{0}+1}) < \theta^{-1}$. As $m$ increases $\hat{S}_{j_{0}+ m+1}= \psi^m (\hat{S}_{j_{0}+1}) \to \theta < \theta^{-1}$. Thus, unless the original sequence is formed only by $[0]$'s  after  $T_{j_{0}+2}$ will exist such a $m$ in the way that $\hat{S}_{j_{0}+ m+1} < S_{j_{0}+ m+1} < \theta^{-1}$ for all the next indices. Finally we conclude that $\sign(\Pi)(\hat{G}_k,\mu)= \left(-, -,\ldots, - \right)$ for all $k$ thus $\hat{G}_k \in \Gamma(\mu)$.\\
\end{proof}
\begin{remark}
  In item (c) of Lemma~\ref{lem: dominated limit points} we can take $\hat{T}_{j_{0}}=T_{j_{0}}$ and the proof would still be valid. At the same time, taking $\delta(\hat{T}_{j_{0}}, \mu) < \delta(T_{j_{0}}, \mu),$ we will be able to choose $\hat{T}_{j_{0}+1}$ in a wider range.  Moreover, one can take $\hat{T}_{j_{0}+1}$ in such a way that we maximize the quantity  $\delta(\hat{T}_{j_{0}+1}, \mu)$ in the range $$[\delta(T_{j_{0}+1}, \mu), \; \theta^{-1} - \psi(\hat{S}_{j_{0}})).$$
Finally, instead of taking the easier choice $\hat{T}_{j_{0}+2}=[0]$, $\hat{T}_{j_{0}+3}=[0]$, \ldots,  $\hat{T}_{j_{0}+ m+1}=[0]$, one can simply take
$$\delta(\hat{T}_{j_{0}+m+1}, \mu) < \delta(T_{j_{0}+m+1}, \mu), \; m \geq 1,$$
with $m$ such that $\hat{S}_{j_{0}+ m+1} < S_{j_{0}+ m+1} $  may be reached much later.
\end{remark}

\section{Variational technique}\label{sec:vartec}
The goal of this section is to investigate, for a generalized Shearer sequence $G_{k}(\mathbb{T},C)$
   \[
   \left\{
     \begin{array}{ll}
       G_{1}(\mathbb{T}):=[T_{1}] \\
       G_{k}(\mathbb{T}):=[T_{1},T_{2},..., T_{k-1}, C_{k}], \; k \geq 2,
     \end{array}
   \right.
   \]
where  $\mathbb{T}=(T_{1},T_{2},...) \in \mathcal{L}_{\infty}$ and $C=(C_{2},C_{3},...) \in \mathcal{L}_{\infty}$, when the Laplacian limit points $\mu_{0}:=\displaystyle\lim_{k \to \infty} \rho_{L}(G_{k})$ is close to a given number $\mu\geq 4.38+$.

In order to do that, we assume $G_{k} \in \Gamma(\mu)$ and consider a variation $\mu_{\varepsilon}:=\mu-\varepsilon$. We ask whether
 $\mu_{0}> \mu_{\varepsilon}$. We already know that $\mu_{0}> \mu_{\varepsilon}$ if, and only if, there exists some $k \in \mathbb{N}$ such that the vector
$$\sign(\Pi)(G_k,\mu_{\varepsilon})$$
has some positive entry, where $\Pi(G_k,\mu_{\varepsilon}) = \left(S_1(\varepsilon), S_2(\varepsilon),\ldots, S_k(\varepsilon) \right)$  is given by
  \[\left\{
    \begin{array}{ll}
      S_{1}(\varepsilon)=1-\mu_{\varepsilon} + \delta(T_1,\mu_{\varepsilon}),\\
      S_{j}(\varepsilon)=\psi_{\varepsilon}(S_{j-1}(\varepsilon))+ \delta(T_j,\mu_{\varepsilon}),\; 2 \leq j \leq k-1,\\
      S_{k}(\varepsilon)=-1+ \psi_{\varepsilon}(S_{k-1}(\varepsilon))+ \delta(C_k,\mu_{\varepsilon}).
    \end{array}
  \right.\]

We need to take a closer look at some terms at the above formula:\\
\begin{itemize}
  \item  The function $\psi_{\varepsilon}$ is given by $\psi_{\varepsilon}(t)= 2-\mu_{\varepsilon} - \frac{1}{t} = 2-\mu - \frac{1}{t} + \varepsilon = \psi(t) + \varepsilon$. Of course, $\psi_{0}(t)= \psi(t)$.
  \item The processing of each path in $T_i$ satisfies the formula
\begin{equation}\label{eq:path-epsilon}
    \left\{ \begin{array}{ll}
               b_1(\varepsilon) = 1-\mu_{\varepsilon}\\
               b_{j+1}(\varepsilon) = \psi_{\varepsilon}(b_j(\varepsilon)), ~~j>1.
             \end{array}
    \right.
   \end{equation}
By a reasoning analogous to the case $\varepsilon=0$, one can prove that for a small $\varepsilon$, the iterates $b_j(\varepsilon)$ obeying Equation~\eqref{eq:path-epsilon} has the following properties. Let $\theta_{\varepsilon}$ and $\theta'_{\varepsilon}$ be fixed points of $\psi_{\varepsilon}(t)=t$. They are given by
\begin{equation}\label{eq:theta-epsilon}
\theta_{\varepsilon}:= \frac{-(\mu_{\varepsilon} -2) -\sqrt{(\mu_{\varepsilon} -2)^2 -4}}{2} \mbox { and } \theta'_{\varepsilon}:= \theta_{\varepsilon}^{-1}>\theta_{\varepsilon}.
\end{equation}
They are attracting and repelling points, respectively, meaning that for $b_1(\varepsilon) \leq  \theta'_{\varepsilon}$, we have $b_j(\varepsilon)$ converging to $\theta_{\varepsilon}$.
Since $\mu> 4.38+$ we have $\mu_{\varepsilon} > 4$, so that
$$1-\mu_{\varepsilon} < b_1(\varepsilon) \leq b_j(\varepsilon) < \theta_{\varepsilon}<0 $$
and, in particular, that all $b_j(\varepsilon) < 0$.
    \item   The function $\varepsilon \to  b_j(\varepsilon)$ is differentiable and $\frac{d\, b_j(\varepsilon)}{d\,\varepsilon }\geq 1$. In particular, $\varepsilon \to  b_j(\varepsilon)$ is strictly increasing. Define $b'_j(\varepsilon):= \frac{d\, b_j(\varepsilon)}{d\,\varepsilon}$. Then, the sequence $(b'_{j}(\varepsilon))$ satisfy the recurrence relation
        \begin{equation}\label{eq: deriv prim bj}
        \left\{
          \begin{array}{ll}
            b'_{1}(\varepsilon)=1 \\
            b'_{j}(\varepsilon) = 1+\frac{1}{\left(b_{j-1}(\varepsilon)\right)^2} b'_{j-1}(\varepsilon),\; j \geq 2.
          \end{array}
        \right.
        \end{equation}
Indeed, $b'_{1}(\varepsilon)= \frac{d\, (1-\mu_{\varepsilon})}{d\,\varepsilon }=1>0$. For $j >1$ we have $b'_{j}(\varepsilon)=\frac{d\, \psi_{\varepsilon}(b_{j-1}(\varepsilon))}{d\,\varepsilon }=1+\frac{1}{b_{j-1}(\varepsilon)^2} b'_{j-1}(\varepsilon)$.  By induction we get $b'_{j}(\varepsilon) \geq 1$, for $j \geq 1$.\\
Additionally, we can also set a upper bound to $b'_{j}(\varepsilon)$:\\
$$b_j(\varepsilon) < \theta_{\varepsilon}<0  \Rightarrow 0<\frac{1}{-b_j(\varepsilon)} < \frac{1}{-\theta_{\varepsilon}} \Rightarrow \frac{1}{(b_j(\varepsilon))^{2}} < \theta_{\varepsilon}^{-2}.$$
Thus
$$b'_{j}(\varepsilon) = 1+\frac{1}{\left(b_{j-1}(\varepsilon)\right)^2} b'_{j-1}(\varepsilon) \leq 1+ \theta_{\varepsilon}^{-2} b'_{j-1}(\varepsilon),$$
which means that $b'_{j}(\varepsilon) \leq 1+ \theta_{\varepsilon}^{-2} + \ldots + \theta_{\varepsilon}^{-2j} \leq \frac{1}{1- \theta_{\varepsilon}^{-2}}<\infty$.
    \item   The function $\varepsilon \to  b_j(\varepsilon)$ is twice differentiable and $\frac{d^{2}\, b_j(\varepsilon)}{d\,\varepsilon^{2} }\geq 0$. In particular, if $j>1$ then $\varepsilon \to  b_j(\varepsilon)$ is strictly concave, because $\frac{d\, b_j(\varepsilon)}{d\,\varepsilon }$ is not constant.
Define $b''_j(\varepsilon):= \frac{d^{2}\, b_j(\varepsilon)}{d\,\varepsilon^{2}}$. Then, the sequence $(b''_{j}(\varepsilon))$ satisfies the recurrence relation
        \begin{equation}\label{eq: deriv seg bj}
        \left\{
          \begin{array}{ll}
            b''_{1}(\varepsilon)=0 \\
            b''_{j}(\varepsilon)=- \frac{2}{b_{j-1}(\varepsilon)^3} \left(b'_{j}(\varepsilon)\right)^{2} + \frac{1}{b_{j-1}(\varepsilon)^2}\;  b''_{j-1}(\varepsilon)  ,\; j \geq 2.
          \end{array}
        \right.
        \end{equation}
Indeed, we already computed $b'_{1}(\varepsilon)=1$ thus $b''_{1}(\varepsilon)= \frac{d\,}{d\,\varepsilon }(1)=0$. \\
        For $j >1$ we have $$b''_{j}(\varepsilon)=\frac{d\,}{d\,\varepsilon }\left(1+\frac{1}{b_{j-1}(\varepsilon)^2} b'_{j-1}(\varepsilon)\right)= $$
        $$=- \frac{2}{b_{j-1}(\varepsilon)^3} \left(b'_{j}(\varepsilon)\right)^{2} + \frac{1}{b_{j-1}(\varepsilon)^2}\;  b''_{j-1}(\varepsilon).$$
        By induction we get $b''_{j}(\varepsilon) \geq 0$, for $j \geq 1$.

  \item  Let $T_{j}$ be one starlike tree attached to $G_{k}$. Recall that $\omega(T_{j})$ denote the number of paths of $T_{j}=[q_{1}^{j},\ldots,q_{\omega(T_{j})}]$ and, for $i=1,\ldots,\omega(T_{j})$, by $b_{q_{i}^{j}}$ the last value output by the algorithm ${\rm Diagonalize}(G_{k},-\mu_{\varepsilon})$ at the $i$-th path of $T_{j}$. We denote the net result of the starlike $T_{j}$ by
\begin{equation}\label{eq:defdelta-epsilon}
\delta(T_{j}, \mu_{\varepsilon}):=\left\{
\begin{array}{ll}
       \sum_{i=1}^{\omega(T_{j})} \left(1 -\frac{1}{ b_{q_{i}^{j}}(\varepsilon)}\right),  \mbox{ for non-empty } T_{j} \\
       0 \mbox{ if } T_{j} \mbox{ is empty. }
\end{array} \right.
\end{equation}
From the previous items we can define the differential drift, $ \delta'(T_{j}, \mu_{\varepsilon}):= \frac{d\,}{d\,\varepsilon } \delta(T_{j}, \mu_{\varepsilon})$ which satisfies the  formula for $T_{j} \neq [0]$
\begin{equation}\label{eq:deriv drift}
\delta'(T_{j}, \mu_{\varepsilon})= \sum_{i=1}^{\omega(T_{j})} \frac{1}{ \left(b_{q_{i}^{j}}(\varepsilon)\right)^{2}} b'_{q_{i}^{j}}(\varepsilon) \geq 0.
\end{equation}
We can also find an upper bound for $\delta'(T_{j}, \mu_{\varepsilon})$. Recall that $\frac{1}{(b_j(\varepsilon))^{2}} < \theta_{\varepsilon}^{-2}$ and $b'_j(\varepsilon) <\frac{1}{1- \theta_{\varepsilon}^{-2}}$, thus
$$\delta'(T_{j}, \mu_{\varepsilon})= \sum_{i=1}^{\omega(T_{j})} \frac{1}{ \left(b_{q_{i}^{j}}(\varepsilon)\right)^{2}} b'_{q_{i}^{j}}(\varepsilon) \leq  \sum_{i=1}^{\omega(T_{j})} \theta_{\varepsilon}^{-2} \frac{1}{1- \theta_{\varepsilon}^{-2}} = \frac{\omega(T_{j})}{\theta_{\varepsilon}^{2} - 1} < \infty$$
because $\omega(T_{j})< \infty$ (see Lemma~\ref{lem:boundness seq starlikes}).
\end{itemize}

\begin{theorem}\label{thm: criteria approx}
     Let $G_{k}:=G_{k}(\mathbb{T},C)$ be a generalized Shearer sequence  such that $(G_{k}) \in \Gamma(\mu)$. Then $\mu:=\displaystyle\lim_{k \to \infty} \rho_{L}(G_{k})$ if and only if the sequence $(\varepsilon_{j})$ converges to zero, where $\varepsilon_{j}$ is the smallest positive solution of  $S_j(\varepsilon)=0$, for $\Pi(G_k,\mu_{\varepsilon}) = \left(S_1(\varepsilon), S_2(\varepsilon),\ldots, S_k(\varepsilon) \right)$.
\end{theorem}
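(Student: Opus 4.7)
The plan is to identify $\varepsilon_j$ explicitly as $\mu-\rho_L(G_j)$, after which the theorem follows immediately from the bounded monotone convergence of $\rho_L(G_j)$.

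First I would fix $j \in \mathbb{N}$ and focus on the graph $G_j = [T_1,\ldots,T_{j-1},C_j]$. The vector $\Pi(G_j,\mu_\varepsilon) = (S_1(\varepsilon),\ldots,S_j(\varepsilon))$ is exactly the output of ${\rm Diagonalize}(L(G_j),-\mu_\varepsilon)$, so Lemma~\ref{lem:detmu}(a) shows that the strict inequality $\mu_\varepsilon > \rho_L(G_j)$ forces $\sign(\Pi)(G_j,\mu_\varepsilon) = (-,\ldots,-)$; in particular, $S_j(\varepsilon)<0$ for every $\varepsilon \in [0,\mu-\rho_L(G_j))$. At $\varepsilon = \mu-\rho_L(G_j)$ we have $\mu_\varepsilon = \rho_L(G_j)$, and Lemma~\ref{lem:detmu}(b) then gives $\sign(\Pi)(G_j,\mu_\varepsilon) = (-,-,\ldots,-,0)$, i.e., $S_j(\mu-\rho_L(G_j)) = 0$ while every previous entry stays negative. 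Therefore the smallest positive $\varepsilon$ satisfying $S_j(\varepsilon)=0$ is exactly
\[
\varepsilon_j \;=\; \mu - \rho_L(G_j).
\]

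Second, because $(G_k) \in \Gamma(\mu)$ is a generalized Shearer sequence, the numbers $\rho_L(G_j)$ form a strictly increasing sequence bounded above by $\mu$, hence convergent to some $\mu_0 \leq \mu$. Passing to the limit in the identification above,
\[
\lim_{j\to\infty}\varepsilon_j \;=\; \mu - \mu_0,
\]
so $\varepsilon_j \to 0$ if and only if $\mu_0 = \mu$, that is, if and only if $\mu = \lim_{k\to\infty}\rho_L(G_k)$. This is the desired equivalence.

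The only delicate step is the identification $\varepsilon_j = \mu - \rho_L(G_j)$, and the difficulty is confined to ruling out $S_j(\varepsilon)=0$ at any smaller positive $\varepsilon$; this is a direct consequence of Lemma~\ref{lem:detmu}(a) applied uniformly on $[0,\mu-\rho_L(G_j))$. No analytic work on the rational function $\varepsilon \mapsto S_j(\varepsilon)$ (the monotonicity and concavity estimates for $b_j(\varepsilon)$, $\delta'(T_j,\mu_\varepsilon)$, etc.\ developed earlier in Section~\ref{sec:vartec}) is needed for this qualitative criterion, although those tools are essential for the quantitative approximation arguments that build on it.
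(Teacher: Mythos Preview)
Your identification $\varepsilon_j = \mu - \rho_L(G_j)$ rests on reading $S_j(\varepsilon)$ as the \emph{last} coordinate of $\Pi(G_j,\mu_\varepsilon)$, but that is not the quantity the theorem is about. In the paper $S_j(\varepsilon)$ denotes the $j$-th coordinate of $\Pi(G_k,\mu_\varepsilon)$ for $k>j$, i.e.\ the \emph{interior} value
\[
S_j(\varepsilon)=\psi_\varepsilon\bigl(S_{j-1}(\varepsilon)\bigr)+\delta(T_j,\mu_\varepsilon),
\]
whereas the last coordinate of $\Pi(G_j,\mu_\varepsilon)$ is the endpoint value $-1+\psi_\varepsilon(S_{j-1}(\varepsilon))+\delta(C_j,\mu_\varepsilon)$. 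These two functions differ by the additive $-1$ and, unless $C$ is the shift of $\mathbb{T}$, also by the starlike tree used. The zero of the interior $S_j(\varepsilon)$ does not equal $\mu-\rho_L(G_j)$; it does not equal $\mu-\rho_L(H)$ for any of the graphs in the sequence, so Lemma~\ref{lem:detmu} cannot deliver the identification you claim. What your argument actually establishes is the tautology ``$\rho_L(G_j)\to\mu$ if and only if $\mu-\rho_L(G_j)\to 0$'', not the criterion the paper states.

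The paper's proof is genuinely analytic for this reason: it shows each $g_j(\varepsilon):=S_j(\varepsilon)$ (interior formula) satisfies $g_j(0)<0$ and $g_j'(\varepsilon)\ge 1$, hence has a unique positive root $\varepsilon_j$; a pole argument then gives $\varepsilon_j<\varepsilon_{j-1}$, and only at the very end is the endpoint root $\varepsilon_k=\mu-\rho_L(G_k)$ squeezed below $\varepsilon_{k-1}$. Your route can be repaired in the shift case $C_j=T_j$ by proving the sandwich $\varepsilon_j<\mu-\rho_L(G_j)<\varepsilon_{j-1}$, but each of these inequalities already requires the monotonicity of $\varepsilon\mapsto S_j(\varepsilon)$ that you explicitly set aside as unnecessary.
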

\begin{proof}
First, we will investigate the sign of $S_{1}(\varepsilon)=1-\mu_{\varepsilon} + \delta(T_1,\mu_{\varepsilon})$.  In this case,
$$\sign(S_{1}(\varepsilon))=+ \Leftrightarrow 1-\mu_{\varepsilon} + \delta(T_1,\mu_{\varepsilon}) > 0 \Leftrightarrow \varepsilon > \mu -1 - \delta(T_1,\mu_{\varepsilon}).$$
To study the above inequality we define the function $g_{1}:[0,\infty) \to \mathbb{R}$ given by
$$g_{1}(\varepsilon)=S_{1}(\varepsilon))= 1-\mu_{\varepsilon} + \delta(T_1,\mu_{\varepsilon}).$$
It is easy to see that $g_{1}(0)= 1-\mu + \delta(T_1,\mu) < \theta^{-1} <0$ $(G_{k}) \in \Gamma(\mu)$. Also, $g_{1}(\varepsilon)$ is differentiable in some neighborhood of  $\varepsilon=0$.
Indeed, for $T_{1}=[q_{1}^{1},\ldots, q_{\omega(T_1)}^{1}]$ we get
$$\frac{d\, g_{1}(\varepsilon)}{d\,\varepsilon }= 1 + \delta'(T_{1}, \mu_{\varepsilon}) \geq 1.$$

Since $\frac{d\, g_{1}(\varepsilon)}{d\,\varepsilon }\geq 1$ and $g_{1}(0)<0$ we obtain an unique root $\varepsilon_{1}$ such that $g_{1}(\varepsilon) <0$ for $\varepsilon< \varepsilon_{1}$ and $g_{1}(\varepsilon) >0$ for $\varepsilon> \varepsilon_{1}$. In this way, we conclude that $\sign(S_{1}(\varepsilon))=+$ for $\varepsilon > \varepsilon_{1}$, in other words, $\sign(\Pi)(G_k,\mu_{\varepsilon}) =\left(+,?,\ldots, ? \right)$, meaning that, $\rho_{L}(G_{k})> \mu_{\varepsilon}$.

In order to get a better approximation of $\mu$ we need to look for the possibility of having $\sign(S_{2}(\varepsilon))=+$ for $\varepsilon< \varepsilon_{1}$. Analogously to the previous case, we define $$g_{2}(\varepsilon)=S_{2}(\varepsilon)= \psi_{\varepsilon}(S_{1}(\varepsilon))+ \delta(T_2,\mu_{\varepsilon}),\; 0\geq \varepsilon< \varepsilon_{1}.$$
It is easy to see that $g_{2}(0)= S_{2}(0))=S_{2} < \theta^{-1} <0$ because  $(G_{k}) \in \Gamma(\mu)$.
For $T_{2}=[q_{1}^{2},\ldots, q_{\omega(T_2)}^{2}]$ we get
$$\frac{d\, g_{2}(\varepsilon)}{d\,\varepsilon }= 1 +\frac{1}{ \left(g_{1}(\varepsilon)\right)^2} \frac{d\, g_{1}(\varepsilon)}{d\,\varepsilon } +
\delta'(T_{2}, \mu_{\varepsilon})  \geq 1.$$
Again, as  $\frac{d\, g_{2}(\varepsilon)}{d\,\varepsilon }\geq 1$ and $g_{2}(0)<0$ we obtain a unique root $\varepsilon_{2}$ such that $g_{2}(\varepsilon) <0$ for $\varepsilon < \varepsilon_{2}$ and $g_{2}(\varepsilon) >0$ for $\varepsilon> \varepsilon_{2}$.\\
We claim that $\varepsilon_{2} < \varepsilon_{1}$. To see that, we analyze
$$\lim_{\varepsilon \to \varepsilon_{1}^{-}} g_{2}(\varepsilon) = \lim_{\varepsilon \to \varepsilon_{1}^{-}} 2-\mu - \frac{1}{g_{1}(\varepsilon)} + \varepsilon + \delta(T_2,\mu_{\varepsilon}) = +\infty $$
because $g_{1}(\varepsilon)<0$ for $\varepsilon < \varepsilon_{1}$ and $\displaystyle\lim_{\varepsilon \to \varepsilon_{1}^{-}} g_{1}(\varepsilon)=0$. Thus, $g_{2}(\varepsilon)>0$ in some left neighborhood of $\varepsilon_{1}$, that is, the root $\varepsilon_{2} < \varepsilon_{1}$.\\
In this way, we conclude that $\sign(S_{2}(\varepsilon))=+$ for $\varepsilon_{2}< \varepsilon < \varepsilon_{1}$, in other words, $\sign(\Pi)(G_k,\mu_{\varepsilon}) =\left(-,+,?,\ldots, ? \right)$, meaning that, $\rho_{L}(G_{k})> \mu_{\varepsilon}$.

If we continue this procedure for $2 \leq j \leq k-1$ we obtain a sequence $0< \varepsilon_{j}< \ldots \varepsilon_{2} < \varepsilon_{1}$ such that, for $\varepsilon_{j}< \varepsilon < \varepsilon_{j-1}$ we have $\sign(\Pi)(G_k,\mu_{\varepsilon}) =\left(-,\ldots, -, +,?,\ldots, ? \right)$ (obviously, by construction, $\sign(\Pi)(G_k,\mu_{\varepsilon}) =\left(-,\ldots, -, -,?,\ldots, ? \right)$ for $\varepsilon< \varepsilon_{j}$), meaning that, $\rho_{L}(G_{k})> \mu_{\varepsilon}$.

The last step is to look for the possibility of having $\sign(S_{k}(\varepsilon))=+$ for $\varepsilon< \varepsilon_{k-1}$. Analogously to the previous case, we define $$g_{k}(\varepsilon)=S_{k}(\varepsilon)= -1+ \psi_{\varepsilon}(S_{k-1}(\varepsilon))+ \delta(C_k,\mu_{\varepsilon}),\; 0\geq \varepsilon< \varepsilon_{k-1}.$$
It is easy to see that $g_{k}(0)= S_{k}(0))=S_{k} < 0$ because  $(G_{k}) \in \Gamma(\mu)$.
For $C_{k}=[p_{1}^{k},\ldots, p_{\omega(C_k)}^{k}]$ we get
$$\frac{d\, g_{k}(\varepsilon)}{d\,\varepsilon }= 1 +\frac{1}{ \left(g_{k-1}(\varepsilon)\right)^2} \frac{d\, g_{k-1}(\varepsilon)}{d\,\varepsilon } + \delta'(C_{k}, \mu_{\varepsilon})  \geq 1.$$
Again, as  $\frac{d\, g_{k}(\varepsilon)}{d\,\varepsilon }\geq 1$ and $g_{k}(0)<0$ we obtain a unique root $\varepsilon_{k}$ such that $g_{k}(\varepsilon) <0$ for $\varepsilon < \varepsilon_{k}$ and $g_{k}(\varepsilon) >0$ for $\varepsilon> \varepsilon_{k}$.\\
We claim that $\varepsilon_{k} < \varepsilon_{k-1}$. To see that, we analyze
$$\lim_{\varepsilon \to \varepsilon_{k-1}^{-}} g_{k}(\varepsilon) = \lim_{\varepsilon \to \varepsilon_{k-1}^{-}} 2-\mu - \frac{1}{g_{k-1}(\varepsilon)} + \varepsilon + \delta(C_k,\mu_{\varepsilon}) = +\infty $$
because $g_{k-1}(\varepsilon)<0$ for $\varepsilon < \varepsilon_{k-1}$ and $\displaystyle\lim_{\varepsilon \to \varepsilon_{k-1}^{-}} g_{k-1}(\varepsilon)=0$. Thus, $g_{k}(\varepsilon)>0$ in some left neighborhood of $\varepsilon_{1}$, that is, the root $\varepsilon_{k} < \varepsilon_{k-1}$.\\
In this way, we conclude that $\sign(S_{k}(\varepsilon))=+$ for $\varepsilon_{k}< \varepsilon < \varepsilon_{k-1}$, in other words, $\sign(\Pi)(G_k,\mu_{\varepsilon}) =\left(-,\ldots, -, + \right)$, meaning that, $\rho_{L}(G_{k})> \mu_{\varepsilon}$.

The conclusion is that, $\displaystyle\lim_{k \to \infty} \rho_{L}(G_{k})=\mu$ if, and only if, the sequence $(\varepsilon_{j})$ converges to zero. Otherwise, if $\varepsilon_{j} \geq \varepsilon_{\infty}>0$ we get $\rho_{L}(G_{k}) < \mu - \varepsilon_{\infty}$ thus $\mu_{0}= \displaystyle\lim_{k \to \infty} \rho_{L}(G_{k})< \mu$.
\end{proof}
\begin{remark}
  Notice that  $\varepsilon_k$  is the solution of
  $$S_{k}(\varepsilon_{k})= -1+ \psi_{\varepsilon}(S_{k-1}(\varepsilon_{k}))+ \delta(C_k,\mu_{\varepsilon_{k}})=0$$
  meaning that
  $$-\varepsilon_{k}:=-1+ \psi(S_{k-1}(\varepsilon_{k}))+ \delta(C_k,\mu_{\varepsilon_{k}})$$
  If  $\varepsilon_{j} \to 0$ then $\mu_{\varepsilon_{k}}=\mu -\varepsilon_{k} \to \mu$ and $S_{k}= -1+ \psi(S_{k-1})+ \delta(C_k,\mu) = [-1+ \psi(S_{k-1})+ \delta(C_k,\mu) -(-1+ \psi(S_{k-1}(\varepsilon_{k}))+ \delta(C_k,\mu_{\varepsilon_{k}}))] -  \varepsilon_{k}  \to 0$ .
\end{remark}

For practical purposes, it is feasible, from a theoretical point of view, but computationally very hard, to obtain the numbers $(\varepsilon_{j})$, because each function $g_{j}(\varepsilon)$ is obtained recursively from the previous one. We notice, however, that we can compute another sequence $(\alpha_{j})$ such that $\varepsilon_{j} < \alpha_{j}$. Thus, if $(\alpha_{j})$ converges to zero then $(\varepsilon_{j})$ converges to zero, providing a sufficient condition to ensure that $\displaystyle\lim_{k \to \infty} \rho_{L}(G_{k})=\mu$. The numbers $(\alpha_{j})$ will be the root of the linear approximation $h_{j}(\varepsilon):=g_{j}(0) + \varepsilon \frac{d\, g_{j}}{d\,\varepsilon }(0)$ of $g_{j}(\varepsilon)$ in $\varepsilon=0$, see Figure~\ref{fig:alpha-epsilon}.
\begin{figure}[H]
  \centering
  \includegraphics[width=12cm]{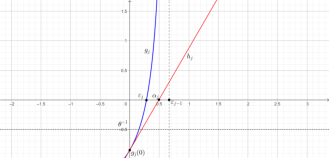}
  \caption{Representation of $\alpha_{j}$.}\label{fig:alpha-epsilon}
\end{figure}
\begin{corollary}\label{cor:suff condition to converge}
   Under the hypothesis of Theorem~\ref{thm: criteria approx}, consider the sequence $(\varepsilon_{j})$. Let $(\alpha_{j})$ be the sequence of roots of the functions
   $$h_{j}(\varepsilon):=g_{j}(0) + \varepsilon \frac{d\, g_{j}}{d\,\varepsilon }(0).$$
   Then, $\varepsilon_{j} < \alpha_{j}$ and, if $(\alpha_{j})$ converges to zero, then $\displaystyle\lim_{k \to \infty} \rho_{L}(G_{k})=\mu$. Moreover,
$\displaystyle\alpha_{1}:= \frac{-S_{1}}{1+ \delta'(T_{1}, \mu_{\varepsilon})(0)}$ and, for $2 \leq j \leq k$,
\begin{equation}\label{eq:recur-alpha second}
  \alpha_{j}= \frac{-S_{j}}{\beta_{j} -\frac{1}{ S_{j-1}} \frac{1}{\alpha_{j-1}}},
\end{equation}
where $\beta_{j}:=1+ \delta'(T_{j}, \mu_{\varepsilon})(0)$, and for $2 \leq j \leq k-1$, $\beta_{k}:=1+ \delta'(C_{k}, \mu_{\varepsilon})(0)$.
\end{corollary}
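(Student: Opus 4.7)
The plan is to combine a convexity argument with a short differentiation at $\varepsilon=0$. From Theorem~\ref{thm: criteria approx} we already have $g_{j}(0)=S_{j}<0$, $g_{j}'\ge 1$ on $[0,\varepsilon_{j-1})$, and $g_{j}(\varepsilon)\to+\infty$ as $\varepsilon\to\varepsilon_{j-1}^{-}$, so $g_{j}$ has a unique root $\varepsilon_{j}\in(0,\varepsilon_{j-1})$. The tangent line $h_{j}(\varepsilon)=g_{j}(0)+\varepsilon\,g_{j}'(0)$ has unique positive root $\alpha_{j}=-S_{j}/g_{j}'(0)$, so $\varepsilon_{j}<\alpha_{j}$ will follow once I show $g_{j}$ is strictly convex on $(0,\varepsilon_{j-1})$.

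The analytic workhorse I would isolate is the following fact: if $f\colon[0,c)\to\mathbb{R}$ is twice differentiable, convex, and strictly negative, then $-1/f$ is convex, since
\begin{equation*}
\frac{d^{2}}{d\varepsilon^{2}}\Bigl(-\frac{1}{f}\Bigr)=-\frac{2(f')^{2}}{f^{3}}+\frac{f''}{f^{2}},
\end{equation*}
and both summands are non-negative when $f<0$ and $f''\ge 0$; the first is strictly positive whenever $f'\neq 0$. The path iterates $b_{q}(\varepsilon)$ are already known to be convex and strictly negative, so each drift $\delta(T_{j},\mu_{\varepsilon})$ is a sum of convex functions, hence convex. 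Inductively, if $g_{j-1}$ is convex and negative on $[0,\varepsilon_{j-1})$, the lemma makes $-1/g_{j-1}$ convex, and the recurrence $g_{j}(\varepsilon)=2-\mu+\varepsilon-1/g_{j-1}(\varepsilon)+\delta(T_{j},\mu_{\varepsilon})$ transfers convexity to $g_{j}$. Strict convexity for $j\ge 2$ comes from $g_{j-1}'\ge 1>0$, which makes the first summand above strictly positive, giving $\varepsilon_{j}<\alpha_{j}$ when $\alpha_{j}<\varepsilon_{j-1}$; when $\alpha_{j}\ge\varepsilon_{j-1}$ the inequality is trivial since $\varepsilon_{j}<\varepsilon_{j-1}$.

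For the explicit recursion, differentiating the defining relation at $\varepsilon=0$ yields
\begin{equation*}
g_{j}'(0)=\bigl(1+\delta'(T_{j},\mu_{\varepsilon})(0)\bigr)+\frac{g_{j-1}'(0)}{S_{j-1}^{2}}=\beta_{j}+\frac{g_{j-1}'(0)}{S_{j-1}^{2}}.
\end{equation*}
Since $\alpha_{j-1}=-S_{j-1}/g_{j-1}'(0)$ gives $g_{j-1}'(0)=-S_{j-1}/\alpha_{j-1}$, substituting into $\alpha_{j}=-S_{j}/g_{j}'(0)$ recovers exactly \eqref{eq:recur-alpha second}. The base case $\alpha_{1}=-S_{1}/(1+\delta'(T_{1},\mu_{\varepsilon})(0))$ is immediate from $g_{1}(\varepsilon)=1-\mu+\varepsilon+\delta(T_{1},\mu_{\varepsilon})$.

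Finally, if $\alpha_{j}\to 0$, the bound $\varepsilon_{j}<\alpha_{j}$ forces $\varepsilon_{j}\to 0$, and Theorem~\ref{thm: criteria approx} delivers $\displaystyle\lim_{k\to\infty}\rho_{L}(G_{k})=\mu$. The main technical obstacle is the convexity induction: one must keep track that $g_{j-1}$ stays strictly negative on the domain where the $-1/f$ lemma is applied, which is precisely the interval $[0,\varepsilon_{j-1})$ furnished by Theorem~\ref{thm: criteria approx}, and one must remember to handle the easy case $\alpha_{j}\ge \varepsilon_{j-1}$ outside this domain separately.
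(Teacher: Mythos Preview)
Your proof is essentially the same as the paper's: both establish $g_{j}''\ge 0$ by induction (using that $-1/f$ inherits convexity from a negative convex $f$, together with the already-established convexity of the path iterates $b_{q}(\varepsilon)$ and hence of $\delta(T_{j},\mu_{\varepsilon})$), deduce $\varepsilon_{j}\le\alpha_{j}$ from the tangent-line comparison, and obtain the recursion \eqref{eq:recur-alpha second} by differentiating the defining relation at $\varepsilon=0$ and substituting $g_{j-1}'(0)=-S_{j-1}/\alpha_{j-1}$. Your packaging is slightly tidier in that you isolate the $-1/f$ convexity lemma explicitly and you address the domain issue $\alpha_{j}\ge\varepsilon_{j-1}$, which the paper leaves implicit.
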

\begin{proof}
   By definition, $\alpha_{j}$ is the solution of $h_{j}(\alpha_{j})=0$, which is given by
   $$\alpha_{j}= \frac{-g_{j}(0)}{\frac{d\, g_{j}}{d\,\varepsilon }(0)}.$$
   We recall that $g_{j}(0)=S_{j}$,
  $$\frac{d\, b_j}{d\,\varepsilon }(0)=
    1+\frac{1}{b_{j-1}^2} \frac{d\, b_{j-1}}{d\,\varepsilon }(0)$$
   and
   \[\frac{d\, g_{j}}{d\,\varepsilon }(\varepsilon)=
   \left\{
     \begin{array}{ll}
       1 + \delta'(T_{1}, \mu_{\varepsilon})(0), & j=1, \; T_{1}=[q_{1}^{1},\ldots, q_{\omega(T_1)}^{1}] \\
        1 +\frac{1}{ \left(g_{j-1}(\varepsilon)\right)^2} \frac{d\, g_{j-1}(\varepsilon)}{d\,\varepsilon } +
\delta'(T_{j}, \mu_{\varepsilon})(0), & 2 \leq j \leq k-1, \; T_{j}=[q_{1}^{j},\ldots, q_{\omega(T_j)}^{j}]\\
       1 +\frac{1}{ \left(g_{k-1}(\varepsilon)\right)^2} \frac{d\, g_{k-1}(\varepsilon)}{d\,\varepsilon } +
\delta'(C_{k}, \mu_{\varepsilon})(0), & j=k, \; C_{k}=[p_{1}^{k},\ldots, p_{\omega(C_k)}^{k}]
     \end{array}
   \right.
   \]
so
\begin{equation}\label{eq: deriv g_j in zero}
  \frac{d\, g_{j}}{d\,\varepsilon }(0)= 1 +\frac{1}{ \left(S_{j-1}\right)^2} \frac{d\, g_{j-1}}{d\,\varepsilon }(0) +
\delta'(T_{j}, \mu_{\varepsilon})(0).
\end{equation}
Substituting the above formulas, for $2 \leq j \leq k-1$, we obtain
$$\alpha_{j}= \frac{-S_{j}}{1 +\frac{1}{ \left(S_{j-1}\right)^2} \frac{d\, g_{j-1}}{d\,\varepsilon }(0) +\delta'(T_{j}, \mu_{\varepsilon})(0)}.$$
Notice that  $\displaystyle\alpha_{j-1}= \frac{-g_{j-1}(0)}{\frac{d\, g_{j-1}}{d\,\varepsilon }(0)}=\frac{-S_{j-1}}{\frac{d\, g_{j-1}}{d\,\varepsilon }(0)}$ so
\begin{equation}\label{eq:recur-alpha first}
  \alpha_{j}= \frac{-S_{j}}{\beta_{j} -\frac{1}{ S_{j-1}} \frac{1}{\alpha_{j-1}}},
\end{equation}
where $\beta_{j}:=1 + \delta'(T_{j}, \mu_{\varepsilon})(0)$.
The above formula provides a way to obtain $\alpha_{j}$ from $\alpha_{j-1}$ without compositions, only the computation of $S_{j}$, which is easy to perform, and the knowledge of the derivatives $b'_j(0)$, which is a very standard recursion problem (see Equation~\eqref{eq: deriv prim bj}, for $\varepsilon=0$), independent from this particular construction.\\
In order to conclude our proof we just need to show that $\varepsilon_{j} < \alpha_{j}$. We claim that it is true if $g_{j}(\varepsilon)$ is a concave function ($\frac{d^{2}\, }{d\,\varepsilon^{2} }(g_{j}) \geq 0$). Indeed, $h_{j}(0)=g_{j}(0)$ and for $\varepsilon>0$ we have
$$\frac{d\, }{d\,\varepsilon }(g_{j}(\varepsilon)-h_{j}(\varepsilon))=\frac{d\, g_{j}}{d\,\varepsilon } - \frac{d\, g_{j}}{d\,\varepsilon }(0) \geq 0.$$
Thus  $g_{j}(\alpha_{j})-h_{j}(\alpha_{j})= g_{j}(\alpha_{j}) \geq g_{j}(0)-h_{j}(0)= 0$, so that $g_{j}(\alpha_{j}) \geq 0$. Since $g_{j}$ is increasing we get  $\varepsilon_{j} < \alpha_{j}$. \\
In this way, we only need to show that $\frac{d^{2}\, }{d\,\varepsilon^{2} }(g_{j}) \geq 0$:
$$ \frac{d^{2}\, g_{j}(\varepsilon)}{d\,\varepsilon^{2} } =\frac{d\, }{d\,\varepsilon }\left(1 +\frac{1}{ \left(g_{j-1}(\varepsilon)\right)^2} \frac{d\, g_{j-1}(\varepsilon)}{d\,\varepsilon } +\sum_{i=1}^{\omega(T_j)} \frac{1}{ \left(b_{q_{i}^{j}}(\varepsilon)\right)^2} \frac{d\, b_{q_{i}^{j}}(\varepsilon)}{d\,\varepsilon }\right)= $$
$$=\frac{-2}{ \left(g_{j-1}(\varepsilon)\right)^3} \left(\frac{d\, g_{j-1}(\varepsilon)}{d\,\varepsilon }\right)^{2} +\frac{1}{ \left(g_{j-1}(\varepsilon)\right)^2} \frac{d^{2}\, g_{j-1}(\varepsilon)}{d^{2}\,\varepsilon } +$$
$$\sum_{i=1}^{\omega(T_j)}
\left( \frac{-2}{ \left(b_{q_{i}^{j}}(\varepsilon)\right)^3} \left(\frac{d\, b_{q_{i}^{j}}(\varepsilon)}{d\,\varepsilon }\right)^{2}
+ \frac{1}{ \left(b_{q_{i}^{j}}(\varepsilon)\right)^2} \frac{d^{2}\, b_{q_{i}^{j}}(\varepsilon)}{d\,\varepsilon^{2} }\right)  \geq 0,$$
because $b_{j}<0$ for all $j$, $\frac{d^{2}\, }{d\,\varepsilon^{2} }(g_{1}) = 0$ and we already proved that $b''_{j}(\varepsilon) \geq 0$.
\end{proof}

\begin{example} For $\mu=\frac{5+\sqrt{33}}{2}$ and the generalized Shearer sequence $\mathbb{T}=([1,1,1],[1], \ldots)$ and $C=([1,1], \ldots)$ we know that $\displaystyle\lim_{k \to \infty} \rho_{L}(G_{k})=\mu$. We can check, from the formula~\eqref{eq:recur-alpha second} that\\
$\alpha_{1}= 0.5930703308$\\
$\alpha_{10}= 0.0003726377$\\
$\ldots$\\
$\alpha_{100}= 1.33485599 \times 10^{-33}$\\
$\alpha_{150}= 5.84453701 \times 10^{-50}$\\
$\alpha_{190}= 4.78412668 \times 10^{-63}$. Thus, $|\rho_{L}(G_{190})-\mu |\leq 10^{-63}$.\\
On the other hand, using  for $\mu=5.4$ the same sequence (we can do that since $G_{k} \in \Gamma(5.4)$) we get $\alpha_{1}= 0.62182468$, $\alpha_{10}= 0.670219903$, $\alpha_{50}= 0.807268557543450193961813121639$, $\alpha_{100}= 0.807268557543452357547180905158$, \\ $\alpha_{150}= 0.80726855754345235754718090515772416691821457170280270632003$ and so on, as expected, since $\displaystyle\lim_{k \to \infty} \rho_{L}(G_{k})=\frac{5+\sqrt{33}}{2}=5.37+ <5.4$.\\
Finally, we consider a sequence selected from a genetic algorithm to fit to $\mu=5.4$: $$\mathbb{T}=([0], [1, 1], [1], [7], [5], [6], [7], [7], [2], [3], [5], [6], [2], [5], [4], [4], [6],$$ $$ [6], [6], [0], [6], [1, 1], [0], [6], [0], [3], [4], [4], [7], [1, 1], ...)$$ and $C$ a shift of $\mathbb{T}$. In this case $G_{k} \in \Gamma(5.4)$ and $\alpha_{29}= 0.0001005914$, suggesting that $\rho_{L}(G_{29})$is  at least $10^{-4}$ close to $\mu=5.4$. Actually, a direct computation shows that $\rho_{L}(G_{29})=5.399999999963451$, which is $3.65 \times 10^{-11}$ close to $\mu=5.4$. That is natural since the convergence of $\alpha_j \to 0$ is only a sufficient condition and $\varepsilon_j$ could be much smaller than $\alpha_j$.
\end{example}

An alternative way to prove that $\alpha_j \to 0$ and so $\displaystyle\lim_{k \to \infty} \rho_{L}(G_{k})=\mu$, without a direct computation, is the following:
\begin{theorem} \label{thm: alternative to alpha goes to zero}
   If
\begin{equation}\label{eq: solution derivative infinite imply alpha goes to zero}
   \sum_{m=1}^{j}\frac{1 + \delta'(T_{m}, \mu_{\varepsilon})(0)}{\left(\prod_{n=m}^{j-1}S_{n}\right)^2}
\end{equation}
is unbounded, then $\displaystyle\lim_{k \to \infty} \rho_{L}(G_{k})=\mu$.
\end{theorem}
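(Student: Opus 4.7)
The plan is to invert the recurrence in Equation~\eqref{eq:recur-alpha second} into a closed-form expression for $\alpha_j$ and then read off, from the hypothesis, that $\alpha_j \to 0$; the conclusion will then follow immediately from Corollary~\ref{cor:suff condition to converge}. Set $u_j := 1/\alpha_j$ and $y_j := u_j S_j$. The recurrence $\alpha_j = -S_j/\bigl(\beta_j - \tfrac{1}{S_{j-1}\alpha_{j-1}}\bigr)$ rearranges to the linear one-step rule
\begin{equation*}
y_j \;=\; -\beta_j \;+\; \frac{y_{j-1}}{S_{j-1}^{\,2}}, \qquad y_1 = -\beta_1.
\end{equation*}
Unrolling this telescoping relation yields
\begin{equation*}
y_j \;=\; -\sum_{m=1}^{j}\frac{\beta_m}{\prod_{n=m}^{j-1} S_n^{\,2}},
\end{equation*}
with the empty product (case $m=j$) equal to $1$. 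Dividing by $S_j$ and using $\alpha_j = S_j/y_j$, I will obtain
\begin{equation*}
\alpha_j \;=\; \frac{|S_j|}{\displaystyle\sum_{m=1}^{j}\frac{\beta_m}{\left(\prod_{n=m}^{j-1} S_n\right)^{2}}},
\end{equation*}
which is consistent with $\alpha_j > 0$ since $S_n < 0$ and $\beta_m = 1 + \delta'(T_m,\mu_\varepsilon)(0) \geq 1$, so every term in the denominator is positive.

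Next I will bound the numerator uniformly in $j$. Because $(G_k)\in\Gamma(\mu)$, Lemma~\ref{lem: dominated limit points}(a) and the construction in Section~\ref{sec:gennotation} give $1-\mu < S_j \leq \theta^{-1} < 0$, so $|S_j| < \mu-1$ for every $j$. With the numerator bounded and the denominator being exactly the sum appearing in Equation~\eqref{eq: solution derivative infinite imply alpha goes to zero}, the hypothesis that this sum is unbounded in $j$ forces $\alpha_j \to 0$.

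Finally, invoking Corollary~\ref{cor:suff condition to converge}, the vanishing of $\alpha_j$ implies $\varepsilon_j \to 0$, and then Theorem~\ref{thm: criteria approx} delivers $\displaystyle\lim_{k\to\infty}\rho_L(G_k) = \mu$. The only subtle point, and thus the main thing to check carefully, is the bookkeeping for the terminal index $j=k$, where $\beta_k$ is defined using $\delta'(C_k,\mu_\varepsilon)(0)$ rather than $\delta'(T_k,\mu_\varepsilon)(0)$ and the expression for $g_k$ carries an extra $-1$; since this only shifts $S_k$ by a constant and does not alter the structure of the recurrence for $y_j$, the closed form above goes through verbatim with the convention of Corollary~\ref{cor:suff condition to converge}. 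The algebraic inversion is the bulk of the work; there is no genuine analytic obstacle beyond verifying sign compatibility so that the term-by-term positivity of the denominator makes the ``ratio tends to zero'' argument rigorous.
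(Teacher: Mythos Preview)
Your proof is correct and follows essentially the same approach as the paper's: the paper introduces $X_j := \tfrac{d g_j}{d\varepsilon}(0)$, derives the linear recurrence $X_j = A_j + B_{j-1}X_{j-1}$ with $A_j = \beta_j$ and $B_j = S_j^{-2}$, and solves it to get the same closed form you obtain (your $y_j$ is exactly $-X_j$), then bounds $|S_j|$ and invokes Corollary~\ref{cor:suff condition to converge}. The only cosmetic difference is that you reach the recurrence by inverting the $\alpha_j$-recursion directly rather than by going back to the derivative formula~\eqref{eq: deriv g_j in zero}.
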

\begin{proof}
   Let $\alpha_j$ be the sequence given by  Corollary~\ref{cor:suff condition to converge}. By definition, $\alpha_{j}$ is given by
   $$\alpha_{j}= \frac{-S_{j}}{\frac{d\, g_{j}}{d\,\varepsilon }(0)}.$$
   From Equation~\eqref{eq: deriv g_j in zero} we get
$$\frac{d\, g_{j}}{d\,\varepsilon }(0)= 1 +\frac{1}{ \left(S_{j-1}\right)^2} \frac{d\, g_{j-1}}{d\,\varepsilon }(0) +
\delta'(T_{j}, \mu_{\varepsilon})(0),$$
and $\frac{d\, g_{1}}{d\,\varepsilon }(0)= 1 +
\delta'(T_{1}, \mu_{\varepsilon})(0)$.\\
Let us introduce the following auxiliary sequences\\
\begin{itemize}
  \item $A_{j}:= 1 + \delta'(T_{j}, \mu_{\varepsilon})(0)$;
  \item $B_{j}:= \frac{1}{ \left(S_{j}\right)^2}$;
  \item $X_{j}:= \frac{d\, g_{j}}{d\,\varepsilon }(0)$.
\end{itemize}
Then, we obtain the standard difference equation problem
\begin{equation}\label{eq: derivative infinite imply alpha goes to zero}
   \left\{
     \begin{array}{ll}
       X_{j}= A_{j} + B_{j-1} X_{j-1}\\
       X_{1}= A_{1}.
     \end{array}
   \right.
\end{equation}
This equation has an explicit solution:\\
$$X_{2}= A_{2} + B_{1} X_{1}=  A_{2} + B_{1} A_{1},$$
$$X_{3}= A_{3} + B_{2} (A_{2} + B_{1} A_{1})= A_{3} + A_{2} B_{2} + A_{1} B_{1} B_{2},$$
$$X_{4}= A_{4} + A_{3} B_{3} + A_{2} B_{2} B_{3}+  A_{1} B_{1} B_{2}B_{3},$$
and so on, obtaining
$$
  \frac{d\, g_{j}}{d\,\varepsilon }(0)= X_{j}= \sum_{m=1}^{j} A_{m} \prod_{n=m}^{j-1} B_{n} = \sum_{m=1}^{j} \left(1 + \delta'(T_{m}, \mu_{\varepsilon})(0)\right) \prod_{n=m}^{j-1} \frac{1}{ \left(S_{n}\right)^2}.
$$
Notice that $S_{1} < S_{j} < \theta^{-1}$ so $-S_{1} > -S_{j} > -\theta^{-1}$. Thus, if $\frac{d\, g_{j}}{d\,\varepsilon }(0)$ is unbounded when $j \to \infty$,  then $\alpha_j \to 0$. By Corollary~\ref{cor:suff condition to converge} we obtain $\displaystyle\lim_{k \to \infty} \rho_{L}(G_{k})=\mu$.
\end{proof}

\begin{remark}
   The formula
$$
  \frac{d\, g_{j}}{d\,\varepsilon }(0)=  \sum_{m=1}^{j}\frac{1 + \delta'(T_{m}, \mu_{\varepsilon})(0)}{\left(\prod_{n=m}^{j-1}S_{n}\right)^2},
$$
has an analogous for adjacency limit points, and the fact that it is  unbounded proves Shearer's result in \cite{shearer1989distribution}. Although the approach is a little different, the basis for Shearer's argument is to show that the product $\prod_{n=m}^{j-1} \left(S_{n}\right)$ has absolute values smaller than 1. This is somehow hidden in the proof of the main theorem (actually he proves that $R_{j-1}R_{j}<1, R_{j-2}R_{j+1}<1,...$ thus $R_{j-2}R_{j-1}R_{j}R_{j+1}<1$, etc,  where the $R_{j}'s$  are the values obtained in the adjacency case). In our Laplacian case, if this was true, we would obtain $\frac{d\, g_{j}}{d\,\varepsilon }(0) \to \infty$ and $\displaystyle\lim_{k \to \infty} \rho_{L}(G_{k})=\mu$  according to  Theorem~\ref{thm: alternative to alpha goes to zero}. Since it is not true  that Equation~\eqref{eq: solution derivative infinite imply alpha goes to zero} is unbounded for any $\mu$, this explains why the classical Laplacian Shearer fails to prove convergence in general.
\end{remark}

\section{Final considerations and future work}\label{sec:final}

In this section we discuss possible consequences of the previous results and additional investigations one could carry on in the future, towards the proof of the conjecture that all the points in the interval $[4.38+, \infty)$ are Laplacian limit points.\\

We mention here an ongoing research work \cite{oliveira2024limit}, where we study limit points for the spectral radius of $A_\alpha$-matrix of a graph $G$, where $A_{\alpha}(G):= \alpha D(G)+ (1-\alpha)A(G)$, for $0\leq \alpha\leq 1$.  It is easy to see that $A_{0}(G)=A(G)$, the adjacency matrix of $G$ and $A_{1}(G)=D(G)$ the degree matrix of $G$. Also, $A_{1/2}(G)=1/2(D(G)+A(G))=1/2 Q(G)$ where $Q$ is the signless Laplacian matrix of $G$. Our main result  is that for any $\alpha \in [0, 1/2)$ there exists a positive number $\tau_2(\alpha)>2$ such that any value $\lambda> \tau_2(\alpha)$ is an $A_{\alpha}$-limit point. Notice that this generalization of  the  Shearer's work  for the adjacency matrix seems to stretch full power (which is $\alpha< \frac{1}{2}$ ). Beyond that, the technique used does not work. As for  $\alpha=\frac{1}{2}$  we have the signless Laplacian matrix, these two manuscripts show how difficult it is to study Laplacian limit points .

\noindent {\bf Question 1}\\

Can one use Theorem~\ref{thm: criteria approx}, or Corollary~\ref{cor:suff condition to converge}, or Theorem~\ref{thm: alternative to alpha goes to zero} to prove that  $$\displaystyle \sup_{G_{k}\in \Gamma(\mu)}  \lim_{k \to \infty} \rho_{L}(G_{k})= \mu $$ at least for some class of points $\mu \geq 4.38$?\\  The answer of this question is definitive for approximation by linear trees (which include caterpillars), using increasing sequences of  spectral radii.\\

\noindent {\bf Question 2}\\

Are there are any points where $\displaystyle \sup_{G_{k}\in \Gamma(\mu)}  \lim_{k \to \infty} \rho_{L}(G_{k}) < \mu?$ \\
If yes, then $\mu$ is not a Laplacian limit point or at least it can not be limit of  increasing sequences of Laplacian spectral radii of linear trees, because $\Gamma(\mu)$ contains all linear trees with spectral radius smaller than $\mu$.\\

\noindent {\bf Question 3}\\

In Question 2, can one find (from the previous lemmas and theorems) a maximum, where the supremum is attained, despite the fact that $\displaystyle \sup_{G_{k}\in \Gamma(\mu)}  \lim_{k \to \infty} \rho_{L}(G_{k}) < \mu$ ?\\
 If yes, this would be a special number in some way!\\

\noindent {\bf Question 4}\\

Is there some particular subinterval of $[4.38+, \infty)$ regarding properties of the product $S_{j}S_{j+1}<1$, provided that $S_{j}>-1$ and $S_{j+1}<-1$?\\ If the answer is yes, then one  may be able to use Theorem~\ref{thm: alternative to alpha goes to zero} to prove that $\displaystyle \sup_{G_{k}\in \Gamma(\mu)}  \lim_{k \to \infty} \rho_{L}(G_{k})= \mu.$\\

\noindent {\bf Question 5}\\

A natural future problem would be to investigate the application of  Theorem~\ref{thm: alternative to alpha goes to zero}. Can one prove that $X_{k}> m_{k} \to \infty$?\\ In order to do that, one needs to control weather $S_{j}S_{j+1}<1$, at least for $j \geq k$ because (for $k \geq 2$) $$X_{4}= A_{4} + A_{3} B_{3} + A_{2} B_{2} B_{3}+  A_{1} B_{1} B_{2}B_{3},$$ we start with $A_{2} B_{2} B_{3}$ where $A_{2}= 1 + \delta'(T_{2}, \mu_{\varepsilon})(0)>1$ and $B_{2} B_{3}= \frac{1}{S_{2}^{2} S_{3}^{2}} >1 $, provided that $S_{2} S_{3} <1$. This does not depend on $S_{1}$, same as for the next additives, except for $ A_{1} B_{1} B_{2}B_{3}$. Recall that the goal is to show that $X_{k} \to \infty$, in this case one may have $X_{4}> 3$ if $B_{3}>1$ because $A_{4}>1$, $A_{2} B_{2} B_{3}>1$, but we do not have control of $A_{1} B_{1} B_{2}B_{3}$ since $S_{1}$ could be negative and as large as $1-\mu$. In the worst case $X_{4}> 2$.\

\noindent {\bf Question 6}\\

Despite the success of the Shearer's method for adjacency limit points, one may ask if part of our difficulties when we translate to the Laplacian limit points, comes from the choice of an iterative process that builds sequences of increasing spectral radii, as we did. One possible line of research is to consider Lapalacian limit points arbitrarily constructed, but still using treatable trees like caterpillars, linear trees and others. If one succeeds, then one may expect a proof of the conjecture or to conclude that one should look at a more general class of trees or general graphs.

\section*{Acknowledgments} This work is partially supported by MATH-AMSUD under project GSA, brazilian team financed by CAPES under project 88881.694479/2022-01.  It was executed while F. Belardo was visiting UFRGS, under the financial of CAPES - PRINT, project 88887.716953/2022-00. The research of Francesco Belardo and Vilmar Trevisan was supported by a grant from the group GNSAGA of INdAM (Italy). Elismar Oliveira and Vilmar Trevisan acknowledge the support of CNPq grants 408180/2023-4 and 310827/2020-5.

\end{document}